\documentclass[11pt]{amsart}

\usepackage[utf8]{inputenc}
\usepackage[T1]{fontenc}
\usepackage[english]{babel}
\usepackage{graphicx}
\usepackage{amsmath}
\usepackage{amssymb}
\usepackage{amsthm}
\usepackage{fullpage}
\usepackage{mathrsfs}
\usepackage{mathtools}
\usepackage{tikz-cd}
\usepackage[hidelinks]{hyperref}
\usepackage{amsaddr}
\usepackage{cleveref}
\usepackage{mathabx}

\hypersetup{allcolors=black}

\newtheorem{theorem}{Theorem}
\numberwithin{theorem}{section}
\newtheorem{lemma}[theorem]{Lemma}
\newtheorem{proposition}[theorem]{Proposition}
\newtheorem{corollary}[theorem]{Corollary}

\newtheorem{definition}[theorem]{Definition}
\newtheorem{remark}{Remark}
\numberwithin{remark}{section}

\numberwithin{example}{section}
\newtheorem*{theoremA*}{Theorem A}
\newtheorem*{theoremB*}{Theorem B}
\newtheorem*{question*}{Question}

\newcommand\C{\mathbb C}
\newcommand\N{\mathbb N}

\newcommand\Bb{\mathcal B}

\newcommand\Ff{\mathcal F}

\newcommand\Jj{\mathcal J}

\newcommand\Mm{\mathcal M}

\newcommand\Pp{\mathcal P}
\newcommand\Qq{\mathcal Q}

\newcommand\id{\text{id}}
\newcommand\Supp{\text{Supp}}
\newcommand\qi{\simeq_q}
\DeclareMathOperator\Adj{Adj}
\DeclareMathOperator\Aut{Aut}
\DeclareMathOperator\co{co}
\DeclareMathOperator\Diag{Diag}
\DeclareMathOperator\Fix{Fix}
\DeclareMathOperator\Jor{Jor}
\DeclareMathOperator\Qu{Qu}
\DeclareMathOperator\Stab{Stab}
\DeclarePairedDelimiter{\abs}{\lvert}{\rvert}
\makeatletter
\let\oldabs\abs
\def\abs{\@ifstar{\oldabs}{\oldabs*}}
\sloppy

\title{Noncommutative properties of 0-hyperbolic graphs}
\author{Amaury Freslon$ ^{1} $, Paul Meunier$ ^{2} $, Pegah Pournajafi$ ^{3} $}
	
\thanks{\noindent $ ^{1} $Amaury Freslon, Université Paris-Saclay, CNRS, Laboratoire de Mathématiques d’Orsay, 91405 Orsay, France \\ 
\indent $ ^{2}$Paul Meunier, KU Leuven, Department of Mathematics, Celestijnenlaan 200B box 2400, BE-3001 Leuven, research supported by the grant 11PAL24N funded by the Research Foundation Flanders (FWO) \\ 
\indent $ ^{3} $Pegah Pournajafi, Chaire Combinatoire, Collège de France, Université PSL, 75005, Paris, France}

\begin{document}
	
\vspace*{-.55cm}
\maketitle

\begin{abstract}
	We study several noncommutative properties of 0-hyperbolic graphs. 
	In particular, we prove that 0-hyperbolicity is preserved under quantum isomorphism. 
	We also compute the quantum automorphism groups of 0-hyperbolic graphs and characterise the ones with quantum symmetry.
\end{abstract}

\section{Introduction}\label{sec:intro}

The study of noncommutative properties of graphs has become a topic of great interest at the intersection of graph theory and quantum group theory. 
It all starts with a simple question: what happens when one studies the representation theory of a graph in a noncommutative algebra? 
This apparently modest artefact surprisingly unravels formerly hidden deeper properties of graphs, ones which vanish when represented in commutative algebras. 
These noncommutative properties, far from being disorganised, follow intricate and subtle strucutres. 
For instance, while the classical symmetries of a graph form a group, their noncommutative symmetries form a quantum group, the natural noncommutative dual of groups. 
Moreover, the notion of isomorphism of graphs evolves into the quantum isomorphism of graphs. 
These notions are neither simple -- the quantum isomorphism of graphs becomes an undecidable property~\cite{Mancinska2019QiNotI}, and encodes the representation theory of finitely presented groups~\cite{Slofstra2020} -- nor arbitrary -- the same notion of quantum isomorphism still preserves many core properties of graphs, such as their spectrum or their block structures~\cite{freslon2025blockstructures}.

The key point of the present paper is to consider the fact that quantum isomorphism preserves in particular many metric properties of graphs, in the line of the work initally started by Fulton~\cite{FultonThesis}. 
It is now well-known that connectedness, and recently 2-connectedness~\cite{freslon2025blockstructures} are preserved, as well as many metric invariants, such as the centre, the radius, or the diameter of a graph. 

One can then wonder what happens with the hyperbolicity of a graph. 
Here, the hyperbolicity we consider is the one obtained from the Gromov product, as defined in~\cite{Gromov1987hyperbolicgroups}. 

We obtain the following result.

\begin{theoremA*}\label{thm:introA}
	Let $G$ be a 0-hyperbolic graph, and let $H$ be quantum isomorphic to $G$. 
	Then $H$ is isomorphic to $G$, in particular, it is also 0-hyperbolic.
\end{theoremA*}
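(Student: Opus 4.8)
The plan is to reduce the statement to a rigidity property of the $2$-connected pieces of $G$. I would first recall the purely combinatorial description of $0$-hyperbolicity: a connected graph is $0$-hyperbolic for the Gromov product of its path metric precisely when it is a \emph{block graph}, i.e.\ every block (maximal $2$-connected subgraph) is a complete graph; the disconnected case reduces to this componentwise, and quantum isomorphism preserves connected components. So the content of Theorem A becomes the assertion that a graph all of whose blocks are complete is quantum-isomorphism rigid.

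Next I would exploit the facts recalled in the introduction and established in \cite{freslon2025blockstructures}: quantum isomorphism preserves connectedness and, more importantly, the entire block decomposition. Concretely, I would expect a quantum isomorphism $G\qi H$ to induce an isomorphism of the two block-cut trees under which matched blocks are again quantum isomorphic and block-incidences at cut vertices are respected. This is the step I would lean on most heavily, and verifying that it genuinely yields \emph{block-wise} quantum isomorphisms in a way that is compatible at the cut vertices -- rather than a mere cardinality or spectral coincidence -- is where I expect the technical heart to lie.

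Granting this, two local rigidity ingredients finish the argument. First, each block of $G$ is some complete graph $K_n$, which is quantum-isomorphism rigid: any graph quantum isomorphic to $K_n$ has exactly $n$ vertices and is cospectral with $K_n$, hence equals $K_n$. Thus every block of $H$ is complete, so $H$ is itself a block graph and in particular $0$-hyperbolic, which already gives the weaker half of the statement. Second, to upgrade ``$0$-hyperbolic'' to ``isomorphic to $G$'', I would reassemble the block-wise data into a single classical isomorphism by induction on the number of blocks, peeling off an end-block $B$ of $G$ (a leaf of the block-cut tree) attached at one cut vertex $c$: the preserved block structure sends $B$ to a corresponding end-block of $H$ and $c$ to the corresponding cut vertex, the restriction to $B$ is an honest permutation of $K_n$ fixing the distinguished vertex, and the restriction to $G\setminus(B\setminus c)$ is again a quantum isomorphism of smaller block graphs to which the induction hypothesis applies.

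The main obstacle, as flagged, is the gluing: a quantum isomorphism is encoded by a magic unitary $U=(u_{vw})$, not by a genuine vertex bijection, so the delicate point is to show that $U$ localises along the block-cut tree -- that $u_{vw}=0$ unless $v$ and $w$ lie in matched blocks, and that the partial bijections coming from the various blocks agree at each shared cut vertex. I would extract this from the distance-preservation inherent in quantum isomorphism, namely that $U$ intertwines all distance matrices and hence respects the unique geodesics characterising block graphs; this should pin down the image of each cut vertex and force the local magic unitaries to be genuine permutations that amalgamate coherently into the desired classical isomorphism $H\cong G$.
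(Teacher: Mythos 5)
Your overall strategy is essentially the paper's: identify $0$-hyperbolic graphs with block graphs, reduce to the connected case, invoke the preservation of the block structure under quantum isomorphism from~\cite{freslon2025blockstructures} together with the superrigidity of complete graphs to conclude that $H$ is again a block graph with blocks matching those of $G$, and then glue. The paper organises the gluing differently from your leaf-peeling: it anchors at the centre $Z(G)$, which by \Cref{thm:complete_block} is either a cut vertex or an entire complete block and is automatically preserved by any quantum isomorphism, applies the $\Psi$-operation to pass to a rooted graph with strictly smaller connected components, and inducts on rooted block graphs (\Cref{lem:qi_superrigid_rooted}); this avoids having to restrict the magic unitary to an induced subgraph, a step your sketch leaves unjustified. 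The one point that would fail as stated is your final sentence: the local magic unitaries are \emph{not} genuine permutations, and cannot be forced to be (already for a single block $K_n$ with $n\geq 4$ the relevant algebra is that of the noncommutative $S_n^+$). What actually closes the argument -- and what the paper uses -- is that the rooted block-structure isomorphism extracted from the quantum isomorphism is a classical combinatorial object, and because every block is complete, \emph{any} family of size-preserving bijections between corresponding blocks that agrees with that structure isomorphism on cut vertices assembles into a graph isomorphism; no coherence condition on internal vertices, and no localisation of $U$ into permutations, is needed.
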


This suggests that it is possible to fully understand the noncommuative properties of 0-hyperbolic graphs from their classical properties, in the sense of~\cite{Meunier2023}. 
This is the main result of the current paper. 
The family of 0-hyperbolic graphs has a purely structural characterisation: they are exactly the graphs whose blocks are complete graphs. 
That is why they are also called block graphs, and that is the name we will primarily use in this article.

\begin{theoremB*}\label{thm:introB}
	The family of block graphs satisfies the following properties:
	\begin{itemize}
		\item Any graph quantum isomorphic to a block graph is isomorphic it;
		\item A block graph has quantum symmetry if and only if it has two nontrivial automorphisms with disjoint support;
		\item The quantum automorphism groups of block graphs form the class $\Jor^+(\C)$, that is, the Jordan closure of the trivial compact quantum group.
	\end{itemize}
\end{theoremB*}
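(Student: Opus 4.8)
The first item is precisely Theorem A, since block graphs are exactly the 0-hyperbolic graphs; so it remains to compute $\Qut(G)$ for a block graph $G$ and to read off the last two items from that computation. The starting point is that a quantum automorphism of $G$ must preserve its block structure (by the results of~\cite{freslon2025blockstructures}) and the graph metric, hence it preserves the associated block--cut tree $\Tt(G)$ together with its distance structure. As $\Tt(G)$ is a tree, it has a Jordan center consisting either of a single node or of a single edge; I would first show that this center is preserved by $\Qut(G)$, and that the corresponding central block or cut-vertex is, up to the residual symmetry of a central block $K_m$, pointwise fixed. Concretely, this identifies a canonical root for $G$ around which the whole group is organised.

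Rooting $G$ at its Jordan center, I would then set up the recursive decomposition of $\Qut(G)$ along $\Tt(G)$, following the framework developed in~\cite{Meunier2023}. There are two cases. If the center is a cut-vertex $v$, the branches are the components of $G-v$ (each reattached to $v$); a quantum automorphism fixes $v$, permutes the branches while respecting their isomorphism type, and acts inside each branch. If the center is a block $B \cong K_m$, a quantum automorphism acts on $B$ through the quantum symmetric group $S_m^+$ and transports the branch hanging at a vertex of $B$ to the branch at its image. In both cases the goal is to prove that the universal C*-algebra defining $\Qut(G)$ is exactly an iterated free wreath product: grouping the $n$ mutually isomorphic branches of a given type, one obtains a factor $\Qut(T) \wr_* S_n^+$, where $T$ is the common rooted branch and $\Qut(T)$ is computed inductively. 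The crux --- and the main obstacle --- is to establish this identification as an equality of compact quantum groups: that every free wreath relation is forced by the defining magic-unitary relations of $\Qut(G)$ (soundness), and conversely that no further relation survives (completeness, i.e. the absence of unexpected entanglement between distinct branches), the latter being where 0-hyperbolicity is essential, as it prevents any edge between two distinct branches.

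Granting this computation, the third item follows by induction on the number of blocks: the base case is a single vertex with $\Qut = \C$, and each step of the recursion applies the Jordan operation, namely a free wreath product by some $S_n^+$, so $\Qut(G)$ lies in $\Jor^+(\C)$; conversely, given an element of $\Jor^+(\C)$ built from $\C$ by such operations, I would realise it as $\Qut$ of an explicitly assembled block graph, attaching $n$ isomorphic rooted branches to a central $K_n$ (or to a central vertex) at each stage and inserting rigid pendant gadgets to pin down the Jordan center so that the recursion reproduces exactly the prescribed free wreath structure. This yields the equality of classes.

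Finally, for the second item I would characterise when $\Qut(G)$ is noncommutative. For sufficiency, if $\sigma,\tau \in \Aut(G)$ are nontrivial with disjoint support, then although they commute classically, their supports being disjoint lets one build a noncommuting pair of coefficients of the fundamental magic unitary of $\Qut(G)$, exhibiting quantum symmetry. For necessity I would trace noncommutativity through the recursive free wreath decomposition: $\Qut(G)$ is noncommutative exactly when some stage produces an $S_n^+$ with $n \ge 4$ (at least four isomorphic branches, or a central block $K_m$ with $m \ge 4$) or a nontrivial free wreath product with $n \ge 2$ whose base is already noncommutative. Each such configuration directly yields two nontrivial automorphisms of disjoint support --- for instance two disjoint transpositions of isomorphic branches, or two disjoint transpositions of vertices of a central $K_m$ with $m \ge 4$. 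The delicate point here is to match the noncommutativity criterion for $\Jor^+(\C)$ with the combinatorial disjoint-support condition tightly enough to obtain an equivalence rather than only one implication.
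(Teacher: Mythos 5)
There is a genuine gap, and it starts with the first item: you cannot derive it from Theorem~A, because in this paper Theorem~A \emph{is} the superrigidity of block graphs --- it is deduced as an immediate corollary of the first bullet of Theorem~B, so your argument is circular. Superrigidity has to be proved, and the paper does so in \Cref{thm:block_graphs1}: one first uses the block-structure preservation results of~\cite{freslon2025blockstructures} to see that any $H$ quantum isomorphic to a block graph $G$ has all blocks quantum isomorphic to complete graphs, hence is itself a block graph; one then shows the quantum isomorphism is adapted to the centres, pushes it through $\Psi$ to a quantum isomorphism of rooted block graphs (\Cref{lem:psi_u}), proves superrigidity in the rooted connected case by gluing back isomorphisms of the (complete) blocks (\Cref{lem:qi_superrigid_rooted}), and finally undoes $\Psi$ (\Cref{lem:block_graphs_psi}). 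None of this is in your proposal.

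For the remaining two items your architecture does match the paper's (canonical invariant anchor, recursive free wreath decomposition along the branches, induction), but the three points you explicitly defer are precisely the paper's technical content. First, the ``completeness'' of the free wreath presentation --- that no extra relations survive --- is \Cref{thm:iso}: the surjection $\Qu(\Psi(G,Q)) \to \Qu(G,Q)$ is an isomorphism \emph{because the anchor $Q$ is complete}, proved by an explicit inverse construction (the computation $LW_{0}=L$, and the fact that every magic unitary commutes with the adjacency matrix of a complete graph); without completeness of the anchor the map genuinely fails to be injective (the $C_4$ example in \Cref{fig:morphism_not_injective}). Second, to apply this one needs \Cref{thm:complete_block}, the purely graph-theoretic fact that the centre of a block graph is either a cut vertex or an entire block, which is what makes $(G,Z(G))$ an anchored graph with complete anchor; your block--cut-tree Jordan centre would need an analogous invariance statement under magic unitaries, which you assert but do not prove. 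Third, the necessity direction of the Schmidt alternative, which you yourself flag as the ``delicate point,'' is resolved in the paper by the case analysis of \Cref{lem:sum_sa} (exactly one factor can be nontrivial, its multiplicity is at most $3$, etc.) combined with the induction of \Cref{thm:rooted_bg_sa}; and for the reverse inclusion $\Jor^{+}(\C)\subseteq \Qu(\mathrm{block\ graphs})$ the paper simply quotes the tree realisation result of~\cite{Meunier2023} rather than building gadgets. As written, your proposal is a correct road map but not a proof: it leaves open exactly the steps where the work lies.
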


This generalises similar results from~\cite{Meunier2023} about trees (for the last item, see also~\cite{vanDobbenetal2023}).

We conclude this introduction by mentioning a natural next step. 
Let $\delta(G)$ denote the hyperbolicity of $G$.

\begin{question*}\label{qu:hyperbolicity}
	Are there two quantum isomorphic graphs $G$ and $H$ such that $\delta(G)\neq \delta(H)$, and if so, what are the minimum hyperbolicities of such graphs?
\end{question*}

Let us briefly outline the organisation of the paper. 
We start with preliminary definitions and results in~\Cref{sec:preliminaries}. 
After this, we introduce general theoretical tools such as the quantum automorphism group of partitioned graphs in~\Cref{sec:partitioned_graphs}, obtain a decomposition of the quantum group of a graph in~\Cref{sec:psi}, and prove some useful graph-theoretical results in~\Cref{sec:complete_blocks}. 
This allows us to study block graphs in depth in~\Cref{sec:block_graphs}.

\section{Preliminaries}\label{sec:preliminaries}

This section contains reminders of known notation and results, as well as some new results that will be used throughout the paper.
We mostly use the same setting and notation as~\cite{freslon2025blockstructures}, which we now briefly recall.

\subsection{Graph theory} \label{sec:pre-graphtheory}

For any graph theoretical notion not defined here, we refer to~\cite{BondyMurty}. 
All graphs in this paper are finite and simple -- that is, loopless and without multiple edges. 
We denote the vertex set and the edge set of a graph $ G $ respectively by $ V(G) $ and $ E(G) $, and its adjacency matrix by $\Adj(G)$. 
We write $ xy $ for an element $ \{x,y\} \in E(G) $. 
A \emph{graph morphism} (or simply a \emph{morphism}) from a graph $ G $ to a graph $ H $ is a function $ \phi\colon V(G) \to V(H) $ such that for all $ \{x,y\} \in E(G) $ we have $ \phi(x)\phi(y) \in E(H) $. 
A morphism $ \phi $ is an isomorphism if and only if it is a bijection and for every $ x, y \in V(G) $ such that $ xy\notin E(G) $ we have $ \phi(x)\phi(y) \notin E(H) $. 
We denote by $G[S]$ the subgraph of $G$ induced by a set of vertices $S\subseteq V(G)$, that is, $V(G[S]) = S$ and $ E(H) = \{ \{x,y\} \in E(G) \mid x,y \in S \} $. 
A \emph{hereditary class of graphs} (or simply a \emph{class} of graphs) is a family of graphs that is closed under taking induced subgraphs.  We denote the disjoint union of two graphs $ G $ and $ H $ by $ G \oplus H $. We denote $G \oplus G \oplus \dots \oplus G$ by $k.G$ where $k$ is the number of factors in the sum.

A graph $ G $ is said to be connected if for every $ x,y \in V(G) $, there exists a walk from $ x $ to $ y $ in $ G $, that is to say a sequence $v_1=x,\ldots,v_n=y$ of vertices such that $v_iv_{i+1}\in E(G)$ for all $1\leq i\leq n-1$. 
The \emph{connected components} of $ G $ are the maximal connected subgraphs of $ G $. 
If $ G $ has at least two vertices, a \emph{cut vertex} of $ G $ is a vertex $ v $ such that $ G \setminus \{v\} $ has strictly more connected components than $ G $. 
For a graph on one vertex, in this paper, we define its only vertex to be a cut vertex. 

We recall that every graph in this paper is finite, loopless, and without multiple edges. 
In this setting, a graph $ G $ is \emph{2-connected} if for every $ v \in V(G) $, the graph $ G \setminus \{v\} $ is connected. 
A maximal 2-connected subgraph of $ G $ is called a \emph{block} of $ G $. 
We call a vertex $ v $ in a block $ B $ of $ G $ an \emph{internal vertex} if it is not contained in any block of $ G $ but $ B $. 
This paper is concerned with a class of graphs for which blocks have a specific form.

\begin{definition}\label{def:block_graph}
	A \emph{block graph} is a graph all of whose blocks are complete graphs.
\end{definition}

\begin{remark}\label{rem:bg_of_a_graph}
	To every graph $G$, one can associate the intersection graph $\Bb(G)$ of its blocks, which is called the \emph{block graph of $G$}. 
	It turns out (see~\cite[Theorem A]{Harary1963blockgraphs}) that a graph is of the form $\Bb(G)$ if and only if it is a block graph in the sense of the definition above, hence the terminology.
\end{remark}

Every graph $ G $ is equipped with a distance
\[ d =d_G\colon V(G) \times V(G) \to [0, +\infty]\]
called the \emph{graph distance} where for $ x, y \in V(G) $, the distance $ d(v,u) $ is the length of a shortest path from $v$ to $u$ in $G$. 

Let $w,x,y,z \in V(G)$ and set $d_1=d(w,x) + d(y,z)$, $d_2=d(w,y)+d(x,z)$, and $d_3=d(w,z)+d(x,y)$. Let $\pi \in S_3$ be a permutation such that $d_{\pi(1)} \geq d_{\pi(2)} \geq d_{\pi(3)}$. Define $h(w,x,y,z)=d_{\pi(1)}-d_{\pi(2)}$. 

The \emph{hyperbolicity} of $ G $, denoted by $ \delta(G) $, is defined as follows:
$$\delta(G)=\frac{1}{2} \max_{w,x,y,z\in V(G)} h(w,x,y,z).$$

It is worth noting that there are multiple notions of hyperbolicity. This one is the definition of hyperbolicity with the Gromov product in~\cite{Gromov1987hyperbolicgroups}, applied to the graph metric. 

The following folklore theorem characterises finite 0-hyperbolic graphs. 

\begin{theorem}
	A connected graph is 0-hyperbolic if and only if it is a block graph.
\end{theorem}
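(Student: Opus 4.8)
The plan is to prove the two implications separately, throughout using that $G$ is $0$-hyperbolic precisely when, for every four vertices $w,x,y,z$, the two largest of the three sums $d_1,d_2,d_3$ coincide (the Gromov four-point condition).

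For the implication \emph{block graph} $\Rightarrow$ \emph{$0$-hyperbolic}, I would argue by induction on the number of blocks, using that a connected block graph is built from its blocks by repeatedly gluing along cut vertices. The base case is a single complete graph $K_n$, where every nonzero distance equals $1$, so all three sums lie in $\{0,1,2\}$ and the four-point condition is immediate. For the inductive step I would prove a gluing lemma: if $G = G_1 \cup G_2$ with $V(G_1)\cap V(G_2) = \{c\}$ a single cut vertex, and each $G_i$ is $0$-hyperbolic, then so is $G$. The key point is that any path between $G_1\setminus\{c\}$ and $G_2\setminus\{c\}$ passes through $c$, so $d_G(u,v)=d_{G_1}(u,c)+d_{G_2}(c,v)$ for $u\in G_1$, $v\in G_2$; checking the four-point condition then reduces, by a short case analysis on how $w,x,y,z$ are distributed between $G_1$ and $G_2$, to the condition inside one of the two pieces. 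Equivalently, one can exhibit the metric directly as a tree metric, realised by the weighted bipartite incidence tree on $V(G)$ and the set of blocks with all edge-weights $\tfrac12$, and invoke that tree metrics satisfy the four-point condition.

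For the converse I would prove the contrapositive: if $G$ is connected but not a block graph, then $\delta(G)>0$. Not being a block graph means some block $B$ is $2$-connected and not complete, so it has two non-adjacent vertices $a,b$ with $d(a,b)=2$; let $s$ be a common neighbour. If $a$ and $b$ have a second common neighbour $s'$, then $\{a,s,b,s'\}$ induces either a $4$-cycle (if $s\not\sim s'$) or a diamond (if $s\sim s'$); in both cases all six pairwise distances are pinned down ($d(a,b)=2$, $d(s,s')\in\{1,2\}$, and every other pair is an edge), and a direct computation gives largest sum $4$ or $3$ against second-largest $2$, so $h(a,s,b,s')\ge 1$ and hence $\delta(G)>0$. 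The remaining case is when every nearest non-adjacent pair has a unique common neighbour; then $B$ is non-chordal, so $2$-connectivity forces a chordless cycle, and I would pass to a shortest induced cycle $C$ of length $n\ge 5$ and exhibit four roughly equally spaced vertices realising $h>0$.

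The main obstacle is precisely this last case. On an induced cycle the \emph{cycle} distances realise the four-point violation, but one must rule out \emph{shortcuts} through the rest of $G$ that could shrink the relevant graph distances. I would handle this by showing that a shortest induced cycle of length $\ge 4$ is isometric in $G$: any distance-reducing external path, combined with the shorter arc of $C$, would produce a shorter chordless cycle of length $\ge 4$, contradicting minimality. This forces the distances among the chosen vertices to equal their cycle distances, so that the classical fact $\delta(C_n)>0$ transfers to $G$. Establishing this isometry carefully (and controlling the chords that could a priori create only triangles rather than shorter long cycles) is the delicate step; the diamond and $C_4$ configurations, by contrast, require no such control, since there all relevant distances are forced by adjacency alone.
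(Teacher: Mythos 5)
The paper states this result as a folklore theorem and gives no proof at all, so there is no in-paper argument to compare yours with; I am judging your plan on its own merits. Your forward implication is sound by either of the two routes you describe (the gluing induction at cut vertices, or the isometric realisation of the metric in the weighted block--incidence tree), and Case A of your converse is a complete and correct computation: an induced $C_4$ or diamond pins down all six pairwise distances and gives $h\geq 1$, hence $\delta(G)\geq \tfrac12$. Producing a non-adjacent pair at distance $2$ inside a non-complete block is also fine, since distances between vertices of a block are realised inside the block.

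The genuine gap is in Case B, and it is twofold. First, the lemma you lean on --- ``a shortest induced cycle of length $\geq 4$ is isometric in $G$'' --- is \emph{false} as stated for a general graph: in the wheel $W_6$ (a $C_6$ plus a hub adjacent to all six rim vertices) the only induced cycle of length $\geq 4$ is the rim, yet antipodal rim vertices are at distance $2$ through the hub. The statement only has a chance under the standing Case B hypothesis that every distance-$2$ pair has a unique common neighbour (equivalently: no induced $C_4$ and no induced diamond), and your sketched proof never invokes that hypothesis; the ``chords creating only triangles'' you worry about are exactly the diamond/wheel phenomenon, and excluding them requires using diamond-freeness explicitly (for instance: a shortcut vertex $u$ adjacent to $v_i$ and $v_j$ must have a gap of length $\geq 2$ among its neighbours on the short arc, since consecutive neighbours everywhere would force a diamond, and any such gap closes into an induced cycle of length between $4$ and $n-1$, contradicting either $C_4$-freeness or minimality; longer shortcuts need a further argument). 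Second, the step ``then $B$ is non-chordal'' is asserted but is itself the nontrivial content of this direction: you must prove that a $2$-connected, non-complete, chordal, diamond-free graph cannot exist, e.g.\ via Dirac's theorem that minimal separators of chordal graphs are cliques (such a separator has size $\geq 2$ by $2$-connectivity, and joining two adjacent separator vertices by shortest paths through the two sides yields either a diamond or a chordless cycle of length $\geq 4$). Neither gap is fatal --- the theorem is true, and note that an induced $C_5$ is automatically isometric, so the isometry lemma is only needed for shortest induced cycles of length $\geq 6$ --- but as written the converse does not go through.
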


From now on, we use the term ``block graphs'' to refer to this class. 

\medskip

For every vertex $ v \in V(G) $, the \emph{eccentricity} of $ v $, denoted by $ e(v) $, is defined to be the its distance to the vertex furthest from it, that is $e(v) = \max_{u \in V(G)} d(v,u) $. 
We denote by $ E(v) $ the set $  \{u \in V(G) \mid  d(v,u) = e(v) \} $ and call each element of $E(v) $ an \emph{eccentric vertex} for $ v $.

The \emph{centre} of $ G $, denoted by $Z(G)$, is the set of vertices with minimum eccentricity, that is
\[ Z(G) = \{v \in V(G) \mid \forall u \in V(G) \ e(u) \geq e(v) \}.\]

The centre is related to the block structure thanks to the following (see~\cite[Theorem 2.2]{BuckleyHarary1990Distancesingraphs}).

\begin{theorem}\label{thm:centre_block}
	Let $G$ be a connected graph. 
	Then, either $Z(G)$ is a cut vertex or it is contained in a unique block $B_{Z(G)}$. 
\end{theorem}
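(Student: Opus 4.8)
The plan is to isolate the single metric fact that drives everything—namely how eccentricity changes when one crosses a cut vertex—and then feed it into the block--cut tree structure of $G$. The key lemma I would prove first is: if $w$ is a cut vertex and $v\neq w$, and if the connected component $C$ of $G\setminus\{w\}$ containing $v$ contains no eccentric vertex of $w$, then $e(v)>e(w)$. The proof is immediate: choose $u\in E(w)$, so that $d(w,u)=e(w)$; by hypothesis $u$ lies in a component of $G\setminus\{w\}$ different from that of $v$, so every walk from $v$ to $u$ passes through $w$, whence $d(v,u)=d(v,w)+d(w,u)=d(v,w)+e(w)\geq 1+e(w)$ because $v\neq w$. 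Thus $e(v)\geq d(v,u)>e(w)$. Taking the contrapositive and using $e(v)=\min_x e(x)\leq e(w)$ for a center vertex, every $v\in Z(G)$ with $v\neq w$ must lie in the component of $G\setminus\{w\}$ that contains some eccentric vertex of $w$.

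From this I would deduce that no cut vertex separates two center vertices: there are no $u,v\in Z(G)$ lying in distinct components of $G\setminus\{w\}$ for some cut vertex $w$. Indeed, suppose such $u,v,w$ existed. Applying the lemma to $v$, the component of $G\setminus\{w\}$ containing $v$ also contains an eccentric vertex $u^*$ of $w$, so $d(w,u^*)=e(w)$. Since $u$ lies in a different component and $u\neq w$, every $u$--$u^*$ walk passes through $w$, giving $d(u,u^*)=d(u,w)+e(w)\geq 1+e(w)>e(w)\geq e(u)$, which contradicts $e(u)\geq d(u,u^*)$.

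It then remains to convert this separation statement into the stated dichotomy. If $Z(G)=\{v\}$ is a single vertex, then either $v$ is a cut vertex (the first alternative) or $v$ belongs to a unique block (the second alternative), and there is nothing more to prove. If $\abs{Z(G)}\geq 2$, I claim $Z(G)$ lies in a single block: were two center vertices in distinct blocks, then the path joining their positions in the block--cut tree of $G$ would pass through some cut vertex $w$ distinct from both, and this $w$ would place them in different components of $G\setminus\{w\}$, contradicting the previous paragraph. Uniqueness of this block is automatic, since two distinct blocks meet in at most one vertex while $\abs{Z(G)}\geq 2$. The main obstacle is precisely this last step: the metric content is short and clean, but one must argue carefully with the block--cut tree that two vertices in different blocks are always separated by a cut vertex \emph{different from both}, treating the cases in which one or both center vertices are themselves cut vertices. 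This case analysis is exactly what forces the ``either a cut vertex or contained in a unique block'' phrasing.
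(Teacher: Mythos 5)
The paper does not actually prove this statement: it is quoted from Buckley--Harary (\emph{Distance in graphs}, Theorem 2.2) and used as a black box, so there is no internal proof to compare against. Your argument is essentially the standard textbook proof, and its metric core is correct: the lemma that a central vertex $v\neq w$ must lie in a component of $G\setminus\{w\}$ containing an eccentric vertex of $w$, and the resulting fact that no cut vertex separates two centre vertices, are both proved cleanly and correctly (the identity $d(v,u)=d(v,w)+d(w,u)$ for $u,v$ on opposite sides of the cut vertex $w$ is exactly the right tool). Two points in the final conversion are only sketched and deserve to be written out. First, the claim that two vertices lying in no common block are separated by a cut vertex \emph{distinct from both}: this is true, and follows by taking the block--cut-tree path between a block containing $u$ and a block containing $v$ and observing that the extreme cut vertices on that path cannot equal the far endpoint (else the two vertices would share a block), so some internal cut vertex avoids $\{u,v\}$; you name this as the obstacle but do not carry it out. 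Second, knowing that every \emph{pair} of centre vertices shares a block does not immediately give that all of $Z(G)$ lies in a \emph{single} block when some centre vertex is itself a cut vertex belonging to several blocks; one closes this by noting that three pairwise-intersecting distinct blocks would create a cycle in the block--cut tree. Neither issue is a flaw in the approach --- both are routine --- but as written the last third of the proof is a plan rather than a proof.
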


\subsection{Quantum permutation groups} \label{subsec:QPG}

One of our main goals is to describe the quantum symetries of block graphs, which are encoded in a structure called its \emph{quantum automorphism group}. 
In this section we recall the main objects under study and prove some preliminary results about them.
We refer the reader to~\cite{Freslon2023Book} for a comprehensive treatment of the theory of compact matrix quantum groups and quantum automorphism groups of graphs.

We start by introducing the key notion of a magic unitary.

Let $X$ be a unital $*$-algebra. A \emph{magic unitary} with coefficients in $X$ is a matrix $U = (u_{ij})_{i\in I, j\in J} $ where $I$ and $J$ are finite sets and such that for all $(i, j)\in I\times J$,
\begin{enumerate}
\item $u_{ij}^{2} = u_{ij} = u_{ij}^{*}$;
\item $u_{ij} u_{ik} = 0 $ for every $k\neq j$ and $ u_{ij} u_{kj} = 0 $ for every $k \neq i$; 
\item $\sum_{k \in I} u_{kj}  = 1 = \sum_{k \in J} u_{ik}$.
\end{enumerate}  
The definition implies that $\vert I\vert = \vert J\vert$ and that the matrix $U$ is a unitary in $\Mm_{I,J}(X)$. 
Notice that Condition (2) is automatic when $X$ is a $C^*$-algebra.

We can now define quantum permutation groups. 
\begin{definition}
	A \emph{quantum permutation group} is a pair $(A,U)$ where $A$ is a unital $*$-algebra generated by elements $U=(u_{ij})_{1\leq i,j\leq n} \in \Mm_n(A)$ for some integer $n \geq 1$ such that $U$ is a magic unitary and there exists a unital $*$-morphism $\Delta \colon A \to A \otimes A$ where $\Delta(u_{ij}) = \sum_{k=1}^{n} u_{ik} \otimes u_{kj}$ for all~$i,j$.   
\end{definition}

In the definition above, $\otimes$ is the unital algebraic tensor product. Moreover, the morphism $\Delta$ is unique and is called the \emph{comultiplication} of the quantum permutation group $(A,U)$. The matrix $U$ is called the \emph{fundamental representation} of the quantum permutation group. 

A quantum permutation group is a compact quantum group in the sense of Woronowicz~\cite{woronowicz1987compact,woronowicz1991remark}.

\begin{definition}
	Let $(A,U)$ and $(B,V)$ be two quantum permutation groups with comultiplications $\Delta_A$ and $\Delta_B$ respectively. 
	A \emph{morphism} of quantum groups $\varphi \colon (A,U) \to (B,V)$ is a unital $*$-morphism $\varphi \colon A\to B$ intertwining the comultiplications, that is $(\varphi\otimes \varphi)\Delta_A = \Delta_B\varphi$.
\end{definition}

We can now define the quantum automorphism groups of a graph as defined in~\cite{Banica2004qaut}.
\begin{definition}
	The \emph{quantum automorphism group} of a graph $G$, denoted by $\Qu(G)$, is the quantum permutation group $(A,U)$ where $A=A(G)$ is the universal unital $*$-algebra generated by elements $U=(u_{xy})_{x,y \in V(G)}$ satisfying:
	\begin{itemize}
		\item $U$ is a magic unitary;
		\item $U\Adj(G) = \Adj(G)U$.
	\end{itemize}
\end{definition}
It then follows that there exists a $*$-homomorphism $\Delta \colon A(G)\to A(G)\otimes A(G)$ such that
\begin{equation*}
	\Delta(u_{ij}) = \sum_{k=1}^{N}u_{ik}\otimes u_{kj}.
\end{equation*}

Given an integer $n$, we define \emph{the} quantum permutation group \emph{$S_n^+$} as the quantum automorphism group of the empty graph on $n$ vertices, which is coherent with the original definition by Wang~\cite{Wang1998}.

One can recover the classical automorphism group from $\Qu(G)$ in the following way: first, the abelianisation of $A(G)$ is the algebra $C(\Aut(G))$ of complex-valued functions on $\Aut(G)$. 
Second, if we denote by $\pi \colon A(G) \to C(\Aut(G))$ this quotient map, and if we identify $C(\Aut(G))\otimes C(\Aut(G))$ with $C(\Aut(G)\times \Aut(G))$, then for $x \in A(G) $ and $g,h \in \Aut(G)$, we have:
\begin{equation*}
	(\pi\otimes \pi)\circ\Delta(x)(g, h) = \pi(x)(gh).
\end{equation*}
That is to say, the abelianisation of $\Qu(G)$ is the Gelfand dual of $\Aut(G)$.

Finally, we recall the fundamental notion of quantum isomorphism of graphs.
\begin{definition}
	A \emph{quantum isomorphism} from a graph $G$ to a graph $H$ is a magic unitary $U$ with coefficients in a unital $C^*$-algebra indexed by $V(H)\times V(G)$ such that
	\begin{equation*}
		U\Adj(G) = \Adj(H)U.
	\end{equation*} 
\end{definition}

Let us recall the following result of Fulton (see~\cite[Sec 3.2]{FultonThesis}).
\begin{lemma}[Fulton, 2006]\label{lem:fulton}
Let $G$ be a graph and let $U$ be a magic unitary commuting with $\Adj(G)$. Let $x$, $y$, $a$, $b\in V(G)$. If $u_{xa}u_{yb}\neq 0$, then $d_{G}(x,y) = d_{G}(a,b)$. 
\end{lemma}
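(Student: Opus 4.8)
The plan is to exploit the fact that a magic unitary commuting with $\Adj(G)$ automatically commutes with every power $\Adj(G)^k$, together with the combinatorial meaning of these powers: the entry $(\Adj(G)^k)_{pq}$ counts the walks of length $k$ from $p$ to $q$, so that $d_G(p,q) = \min\{k \ge 0 : (\Adj(G)^k)_{pq} \neq 0\}$. The strategy is therefore to transfer, for each $k$, the scalar $(\Adj(G)^k)_{xy}$ onto the pair $(a,b)$ using the single algebra element $u_{xa}u_{yb}$, and then to read off the two distances as the first index at which the respective walk-count sequences become nonzero.

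Concretely, I would first write the intertwining relation $U\Adj(G)^k = \Adj(G)^k U$ at the entry $(x,b)$, obtaining
\[
\sum_c u_{xc}(\Adj(G)^k)_{cb} = \sum_c (\Adj(G)^k)_{xc}\, u_{cb}.
\]
Then I left-multiply by $u_{xa}$ and right-multiply by $u_{yb}$. The magic unitary relations collapse both sums: on the left, $u_{xa}u_{xc} = \delta_{ac}\,u_{xa}$ annihilates every term except $c=a$, while on the right $u_{cb}u_{yb} = \delta_{cy}\,u_{yb}$ annihilates every term except $c=y$. This yields the key identity
\[
(\Adj(G)^k)_{ab}\, u_{xa}u_{yb} = (\Adj(G)^k)_{xy}\, u_{xa}u_{yb},
\]
valid for every $k \ge 0$.

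To finish, I would invoke the hypothesis $u_{xa}u_{yb} \neq 0$. Since the two walk counts are complex scalars multiplying the \emph{same} nonzero algebra element, they must be equal, so $(\Adj(G)^k)_{ab} = (\Adj(G)^k)_{xy}$ for all $k$. The two sequences of walk counts then coincide, hence their smallest nonvanishing index coincides, which gives $d_G(x,y) = d_G(a,b)$ (with both distances infinite in the degenerate case where the sequences vanish identically).

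The computation looks routine, and indeed the only genuine subtlety is the bookkeeping in deciding on which side to multiply by each of $u_{xa}$ and $u_{yb}$, so that exactly one orthogonality relation fires per side and produces the clean identity above. Once that identity is in hand, the scalar cancellation and the walk-count characterisation of distance are immediate. I expect the only points requiring explicit care to be the remark that commutation with $\Adj(G)$ propagates to all powers $\Adj(G)^k$, and the legitimacy of the scalar cancellation, which rests squarely on the nonvanishing hypothesis.
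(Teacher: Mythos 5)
Your proof is correct. The paper does not actually reproduce an argument here --- it simply cites Section~3.2 of Fulton's thesis --- and your derivation is essentially the standard one underlying that reference: commutation with $\Adj(G)$ propagates to all powers, sandwiching the $(x,b)$ entry of $U\Adj(G)^k = \Adj(G)^kU$ between $u_{xa}$ on the left and $u_{yb}$ on the right collapses both sums by the magic unitary orthogonality relations, and the resulting identity $(\Adj(G)^k)_{ab}\,u_{xa}u_{yb} = (\Adj(G)^k)_{xy}\,u_{xa}u_{yb}$ forces equality of the scalar walk counts for every $k$, hence equality of the first nonvanishing index, which is the distance. The two orthogonality collapses are applied on the correct sides ($u_{xa}u_{xc}=\delta_{ac}u_{xa}$ uses the row relation, $u_{cb}u_{yb}=\delta_{cy}u_{yb}$ the column relation), the scalar cancellation is legitimate since the algebra is a complex vector space and $u_{xa}u_{yb}\neq 0$, and you correctly cover the disconnected case where both distances are $+\infty$. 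This is a complete, self-contained proof of the cited lemma.
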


When decomposing graphs, stabilisers of the automorphism group, which are subgroups of the automorphism group, appear naturally. 
There is an analogous phenomenon for the quantum automorphism group which we introduce here. 

Let $G$ be a graph and let $R\subseteq V(G)$. 
We say a magic unitary $U$ \textit{stabilises $R$} if $u_{xy} = 0$ whenever $x\in R$ and $y\notin R$, or $x\notin R$ and $y\in R$. 
Equivalently, this means that $U = \Diag(U_0,U_1)$ is block-diagonal, with $U_1$ indexed by $R\times R$. 
This makes the following definition natural.

\begin{definition}\label{def:stabiliser}
	Let $G$ be a graph on $n\geq 1$ vertices and let $R\subseteq V(G)$. 
	The \textit{stabiliser} of $R$ in $\Qu(G)$ is the pair $\Stab_{\Qu(G)}(R) = (A,U)$ where $A$ is the universal unital $*$-algebra generated by $n^2$ generators $U = (u_{xy})_{x,y\in V(G)}$ satisfying the following relations:
	\begin{itemize}
		\item $U$ is a magic unitary,
		\item $U\Adj(G) = \Adj(G)U$,
		\item $U$ stabilises $R$.
	\end{itemize}
\end{definition}

This defines a quantum permutation group, which is clearly a quotient of the quantum automorphism group of the graph. 
The proof is essentially the same as the one proving that $\Qu(G)$ is a quantum permutation group and we omit it here.

When $R = \{x\}$, we sometimes write $\Stab_{\Qu(G)}(R) = \Fix_{\Qu(G)}(x)$.

\medskip 

A \emph{marked graph} is a pair $(G,R) $ where $ G $ is a graph and $ R\subseteq V(G) $. 
A \emph{quantum isomorphism of marked graphs} from $ (G, R) $ to $ (H, S) $ is a quantum isomorphism $ U $ from $ G $ to $ H $ that preserves the marks, that is $ u_ax = 0 $ if $ a \in S $ and $x\notin R$ or $ a \notin S $ and $ x\in R$. 
When $(G,R)=(H,S)$, we say that $U$ is \emph{adapted} to $(G,R)$. 
We define the \emph{quantum automorphism group} of $(G, R) $ to be $ \Qu(G,R) = \Stab_{\Qu(G)}(R) $. 
When $R = \{r\}$, we often write $(G,r)$ instead of $(G,\{r\})$ as a slight abuse of notation. 

While marked graphs are close to 2-coloured graphs, they behave differently with respect to quantum isomorphism, and even classical isomorphism. In \Cref{fig:marked_coloured}, the two underlying graphs are isomorphic as coloured graphs, but they are not isomorphic as marked graphs. 

\begin{figure}
	\begin{tikzpicture}
		\tikzset{bullet/.style={circle, fill=black, inner sep=1.5pt}}
		\tikzset{every node/.style={font=\small}}
		
		\node at (0,-1) {$(H, \{c\})$};
		
		\node[bullet, label=above:$a$] (a) at (0,1.5) {};
		\node[bullet, label=below:$b$] (b) at (-1,0) {};
		\node[bullet, label=below:$c$] (c) at (1,0) {};
		
		\draw (a) -- (b) -- (c) -- (a);
		
		\draw[dashed] (c) circle (0.4);
		
		\node at (6,-1) {$(G, \{x,y\})$};
		
		\node[bullet, label=above:$x$] (x) at (6,1.5) {};
		\node[bullet, label=below:$y$] (y) at (5,0) {};
		\node[bullet, label=below:$z$] (z) at (7,0) {};
		
		\draw (x) -- (y) -- (z) -- (x);
		
		\draw[dashed, , rotate around={-30:(5.5,0.75)}] (5.5,0.75) ellipse (0.5 and 1.4);
	\end{tikzpicture}
	\caption{Two graphs that are isomorphic as 2-coloured graphs but not as marked graphs.} \label{fig:marked_coloured}
\end{figure}
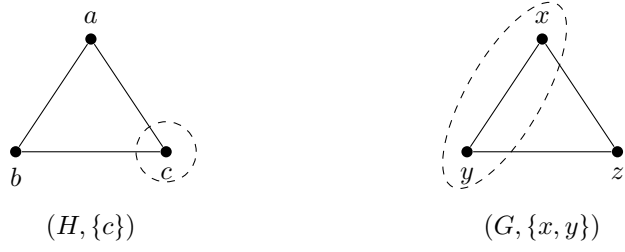

We will only use two specific types of marked graphs in this paper. A marked graph $ (G,R) $ is an \emph{anchored graph} if the intersection of $ R $ with every connected component of $ G $ is either a block or a cut vertex. 
Since we will only use connected anchored graphs, we will implicitly assume our anchored graphs to be connected in the rest of this paper. 
Finally, a marked graph $ (G,R) $ is called a \emph{rooted graph} if $R$ contains exactly one vertex per connected component of $G$.

Anchored graphs are practical to study the quantum automorphism group of a graph since graphs always come with a natural anchor. Indeed, by \Cref{thm:centre_block}, the centre $Z(G)$ of a connected graph $ G $ is either a cut vertex $ \{r\} $ or is contained in a unique block $ B_Z(G) $. We set $Q=\{r\}$ in the former case and $Q=B_Z(G)$ in the latter case. 
\begin{proposition} \label{prop:every_graph_anchored}
	Let $G$ be a connected graph, $ Q \subseteq V(G) $ as above, and $ U $ a magic unitary commuting with $\Adj(G)$. Then, $U$ is adapted to $(G,Q)$. In particular, there is a canonical isomorphism $\Qu(G) \simeq \Qu(G,Q)$. 
\end{proposition}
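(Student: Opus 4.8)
The plan is to first reformulate what must be shown, then establish the crucial metric statement, and finally deduce the isomorphism of quantum permutation groups from universal properties. Unwinding the definition of a quantum isomorphism of marked graphs in the self-map case $(G,Q)\to(G,Q)$, saying that $U$ is adapted to $(G,Q)$ means exactly that the entry of $U$ vanishes as soon as precisely one of its two indices lies in $Q$; equivalently, $U = \Diag(U_0,U_1)$ is block-diagonal with $U_1$ indexed by $Q\times Q$, i.e. $U$ stabilises $Q$. So the first assertion reduces to proving this stabilisation property, after which the ``in particular'' clause will follow formally.

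The engine I would use is \Cref{lem:fulton} combined with the magic unitary identities $\sum_b u_{yb}=1=\sum_y u_{yb}$. First I would show that $u_{xa}\neq 0$ forces $e(x)=e(a)$. To get $e(a)\ge e(x)$, pick $y$ with $d(x,y)=e(x)$ and expand $u_{xa}=u_{xa}\sum_b u_{yb}=\sum_b u_{xa}u_{yb}$; since the left-hand side is nonzero, some summand $u_{xa}u_{yb}$ is nonzero, and \Cref{lem:fulton} gives $d(a,b)=d(x,y)=e(x)$, whence $e(a)\ge e(x)$. The reverse inequality is symmetric, choosing $b$ with $d(a,b)=e(a)$ and expanding $u_{xa}=\sum_y u_{xa}u_{yb}$ via the corresponding column sum. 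Consequently, if $x\in Z(G)$ and $a\notin Z(G)$, or vice versa, their eccentricities differ and $u_{xa}=0$, so $U$ stabilises $Z(G)$. In the first case of \Cref{thm:centre_block}, where $Q=Z(G)=\{r\}$, this already proves that $U$ is adapted to $(G,Q)$.

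The delicate point, and where I expect the main obstacle to lie, is the second case $Q=B_{Z(G)}$: one must upgrade invariance of $Z(G)$ to invariance of its entire block. The same Fulton trick does preserve further metric invariants — for instance the distance-to-centre $\min_{z\in Z(G)}d(v,z)$, which is preserved because $U$ restricts to a magic unitary on the $Z(G)$-block — but these do not in general single out $B_{Z(G)}$ among the vertices, so a purely metric argument does not suffice. The clean way through is to invoke that $U$ respects the block decomposition of $G$: by the preservation of $2$-connectivity and of the block structure under magic unitaries commuting with the adjacency matrix \cite{freslon2025blockstructures}, $U$ carries blocks to blocks compatibly with the block--cut-vertex structure. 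Since $B_{Z(G)}$ is the unique block meeting the $U$-invariant set $Z(G)$, it must itself be $U$-invariant, i.e. $U$ stabilises $Q=B_{Z(G)}$.

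Finally, to obtain the quantum-group statement, I would argue as follows. By construction $\Qu(G,Q)=\Stab_{\Qu(G)}(Q)$ is a quotient of $\Qu(G)$, via the canonical surjection $A(G)\to A(G,Q)$ that imposes the stabiliser relations. The preceding paragraphs show that the universal fundamental representation of $A(G)$, being a magic unitary commuting with $\Adj(G)$, already satisfies exactly those stabiliser relations. Hence all defining relations of $A(G,Q)$ hold in $A(G)$, so universality of $A(G,Q)$ yields an inverse $*$-morphism $A(G,Q)\to A(G)$. As both maps are determined on generators and the comultiplications are given by the same formula, they are mutually inverse isomorphisms of quantum permutation groups, establishing the canonical isomorphism $\Qu(G)\simeq\Qu(G,Q)$.
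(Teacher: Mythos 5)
The paper's own justification of this proposition is a single citation of \cite[Lemma 3.7]{freslon2025blockstructures}, so you are reconstructing an argument that the paper outsources; that reconstruction is the right thing to attempt, and a substantial part of it is correct. The reformulation of ``adapted to $(G,Q)$'' as block-diagonality, the universality argument for the ``in particular'' clause, and the eccentricity-preservation argument (expanding $u_{xa}$ against the row sum at an eccentric vertex and against a column sum, then applying \Cref{lem:fulton}) are all fine. This shows that any magic unitary commuting with $\Adj(G)$ stabilises $Z(G)$, which settles the case $Q=Z(G)=\{r\}$.

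The gap is in the second case, exactly where you predicted it. The assertion that $B_{Z(G)}$ is the unique block \emph{meeting} $Z(G)$ is false: \Cref{thm:centre_block} only gives uniqueness of the block \emph{containing} $Z(G)$, and when $Z(G)$ contains cut vertices several blocks meet it. For $G=P_4$ with vertices $1$--$2$--$3$--$4$, the centre is $\{2,3\}$ and all three blocks $\{1,2\},\{2,3\},\{3,4\}$ meet it. Moreover, when $Z(G)\subsetneq B_{Z(G)}$ (take $C_5$ with a pendant vertex $p$ attached at vertex $1$: the centre is $\{1,2,5\}$, properly contained in the block $C_5$, and the block $\{1,p\}$ also meets the centre), stabilisation of $B_{Z(G)}$ genuinely does not follow from eccentricity preservation, and ``$U$ carries blocks to blocks'' cannot be extracted from \Cref{thm:partitioned_graphs}, since blocks overlap at cut vertices and do not form a partition. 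A correct completion along your lines: assume $u_{xa}\neq 0$ with $x\in B_{Z(G)}$. If $Z(G)=\{z\}$ with $z$ not a cut vertex, then $u_{xa}=u_{xa}u_{zz}\neq 0$, so by preservation of the common-block relation (\cite[Lemma 3.5]{freslon2025blockstructures}, used the same way in the proof of \Cref{lem:qi_superrigid_rooted}) $a$ shares a block with $z$, and the only block containing $z$ is $B_{Z(G)}$. If $\vert Z(G)\vert\geq 2$, pick distinct $z_1,z_2\in Z(G)$ and extract distinct $b_1,b_2\in Z(G)$ with $u_{xa}u_{z_1b_1}u_{z_2b_2}\neq 0$; then $a$ shares a block with $b_1$ and with $b_2$, and since two distinct vertices lie in at most one common block and the block--cut tree is acyclic, this forces $a\in B_{Z(G)}$. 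Without this extra step (or a direct appeal to Lemma 3.7 of the cited paper, as the authors do), the second case is not proved.
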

\begin{proof}
	This is a direct consequence of~\cite[Lemma 3.7]{freslon2025blockstructures}. 
\end{proof}

It is well-known that the quantum automorphism group of iterated copies of a graph decomposes as a wreath product with a quantum permutation group. See~\cite{bichon2004wreath} for the original formulation in a slightly different setting,~\cite[Theorem 6.1]{banica2007free}, or, for a presentation closer to our terminology,~\cite[Theorem 3.8]{Meunier2023}. 
Since we work with rooted graphs in this paper, let us present the corresponding formulation here.

\begin{lemma}\label{lem:qu_rooted_sums}
	Let $(G,r)$ be a connected rooted graph, and let $d\geq 1$. 
	We have $\Qu(d.(G,r)) = \Qu(G,r) \wr S_d^+$, with the natural actions.
\end{lemma}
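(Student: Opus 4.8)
The plan is to establish the isomorphism $\Qu(d.(G,r)) = \Qu(G,r) \wr S_d^+$ by constructing the fundamental representation of the wreath product explicitly and verifying that it satisfies exactly the defining relations of the quantum automorphism group of the disjoint union $d.(G,r)$. The underlying intuition is classical: an automorphism of $d$ copies of $(G,r)$ must permute the copies (since the root $r$ is the unique marked vertex in each connected component, any magic unitary commuting with the adjacency matrix must send roots to roots and map whole components to whole components) and then act on each copy by a rooted automorphism. The free wreath product $\Qu(G,r) \wr S_d^+$ is precisely the quantum-group incarnation of this ``permute-then-act'' structure, so the content of the lemma is that no additional relations appear and no relations are lost in the quantum setting.

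First I would set up notation: write the vertex set of $d.(G,r)$ as $\{1,\dots,d\}\times V(G)$, with the copies of the root being the vertices $(k,r)$. The adjacency matrix of the disjoint union is block-diagonal, with each block equal to $\Adj(G)$. I would then recall that $\Qu(G,r) \wr S_d^+$ is, by definition of the free wreath product, generated by $d$ commuting-in-the-appropriate-sense copies $u^{(1)},\dots,u^{(d)}$ of the generators of $\Qu(G,r)$ together with a magic unitary $v=(v_{k\ell})$ generating $S_d^+$, with the fundamental representation given in block form by $w_{(k,x),(\ell,y)} = v_{k\ell}\, u^{(k)}_{xy}$ (or the analogous sliced-product convention the paper uses). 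The bulk of the argument is to check that this matrix $w$ is a magic unitary commuting with $\Adj(d.(G,r))$ and stabilising the set of roots, which gives a surjection $\Qu(d.(G,r)) \to \Qu(G,r)\wr S_d^+$; conversely, one uses the universal property of $\Qu(d.(G,r))$ to produce a morphism the other way, and checks the two are mutually inverse on generators.

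The key structural step — and the place where one genuinely uses the rootedness hypothesis rather than just formal wreath-product algebra — is to argue that for any magic unitary $U$ adapted to $(d.(G,r),\,\text{roots})$ and commuting with the adjacency matrix, the off-diagonal coefficients $u_{(k,x),(\ell,y)}$ factor through a ``which copy'' part and a ``within a copy'' part. Concretely, I would use \Cref{lem:fulton}: since the disjoint union has infinite distance between distinct components, $u_{(k,x),(\ell,y)}$ forces $x$ and $y$ to be at finite distance from the respective roots in a compatible way, and in particular the coefficients detecting the component-level permutation, $v_{k\ell} := u_{(k,r),(\ell,r)}$, already form a magic unitary, while the within-copy entries assemble into copies of the $\Qu(G,r)$-magic unitary. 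This is exactly the step that shows the generators of $A(d.(G,r))$ satisfy the free-wreath-product relations and nothing more.

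The main obstacle I anticipate is the verification that the factorisation is both well-defined and complete: one must show that $u_{(k,x),(\ell,y)} = u_{(k,r),(\ell,r)}\, u_{(k,x),(\ell,y)}$ (the component-permutation idempotent ``contains'' the full coefficient), so that the naive definition $u^{(k)}_{xy} := \sum_{\ell} u_{(k,x),(\ell,y)}$ really does yield, after amalgamation, the generators of the stated wreath product and respects the freeness. This is precisely the technical heart that distinguishes the quantum case from the classical one, where commutativity makes such factorisations automatic. Rather than redo this computation from scratch, I expect the cleanest route is to invoke the existing free-wreath-product theorem for disjoint unions (the references \cite{bichon2004wreath}, \cite[Theorem 6.1]{banica2007free}, and \cite[Theorem 3.8]{Meunier2023} cited in the excerpt) and simply check that its hypotheses — connectedness of $(G,r)$ and the fact that the marking singles out one root per component — are met here, so that the rooted-graph formulation follows as a direct specialisation. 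The only genuinely new verification is that the root-marking is preserved by every relevant magic unitary, which is handled by the distance argument above.
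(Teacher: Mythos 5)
Your proposal is correct and follows essentially the same route as the paper: both reduce to the known unrooted decomposition $\Qu(d.G) = \Qu(G)\wr S_d^+$ from \cite{Meunier2023} (see also \cite{banica2007free}), build the two universal-property-induced morphisms from the explicit magic unitaries $w_{(k,x),(\ell,y)} = v_{k\ell}\,u^{(k)}_{xy}$ and $u^{(k)}_{xy} = \sum_{\ell} u_{(k,x),(\ell,y)}$, and observe that the only genuinely new verification is that these matrices respect the root-stabilisation relations. One minor caveat: root-preservation is an \emph{imposed} relation of $\Qu(d.(G,r)) = \Stab_{\Qu(d.G)}(R)$ rather than a consequence of commuting with the adjacency matrix (only the preservation of connected components is automatic, via \Cref{lem:fulton}), but your actual argument treats it correctly as a condition to be checked on the candidate magic unitaries.
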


\begin{proof}
	Let $\Qu(G,r) = (A,U)$, $\Qu(d.(G,r)) = (B,V)$, and $S_d^+ = (X,S)$. 
	Let $\Qq = \Qu(G,r)\wr S_d^+$, and write $\Qq = (C,Q)$. 
	For $1\leq i\leq d$ and $x\in V(G)$, we denote by $(G_i,r_i)$ the $i$th copy of $(G,r)$ in $d.(G,r)$, and by $x_i$ the copy of $x$ in $G_i$. 
	We let $R = \{r_1,\ldots,r_d\}$. 

	The proof follows closely the proof of Theorem 3.8 of~\cite{Meunier2023} by adapting it to the rooted case. 
	We introduce $W = (w_{xy})_{x,y\in V(dG)} \in \Mm_{V(dG)}(C)$ as follows: let $x$, $y\in V(d.G)$. 
	Let $i$ and $j$ be such that $x\in V(G_i)$ and $y\in V(G_j)$. 
	We define $w_{xy} = s_{ij}\nu_i(u_{xy})$. 
	In~\cite{Meunier2023}, it is shown that $W$ is a magic unitary commuting with $\Adj(d.G)$. 
	Let us show it stabilises the set of roots: assume that $x \notin R$. 
	Then $w_{xr_j} = s_{ij}\nu_i(u_{xr}) = 0$ since $u_{xr} = 0$. 
	Similarly $w_{r_jx} = 0$. 
	Hence $W$ stabilises the roots. 
	By universal property, this allows to obtain a (unique) unital $*$-morphism $\varphi \colon B \to C$. 
	The same computation than in~\cite{Meunier2023} shows that it commutes with the comultiplications. 

	Now let $1\leq i\leq d$. 
	We want to build a magic unitary $Z_i \in \Mm_{V(G)}(A)$. 
	For $x$, $y\in V(G)$, we define $z_{i,xy} = \sum_{j=1}^d v_{x_iy_j}$ and $Z_i = (z_{i,xy})_{x,y\in V(G)}$. 
	In~\cite{Meunier2023}, it is shown that $Z_i$ is a magic unitary commuting with $\Adj(G)$. 
	Let us check that it preserves the root. 
	Let $x\in V(G) \setminus\{r\}$. 
	We have $z_{i,xr} = \sum_{j=1}^d v_{x_i,r_j} = 0$ since $x_i\notin R$ and $V$ preserves $R$. 
	Similarly, we have $z_{i,rx} = 0$. 
	Hence $Z_i$ fixes $r$, as desired, and by universality we obtain a (unique) unital $*$-morphism $f_i\colon A\to B$. 

	The morphism $f_0 \colon X\to B$ is defined in the same way as in~\cite{Meunier2023} (it is also the morphism $\lambda_\Pp$ appearing in~\Cref{thm:snplus_qu_partitioned} with $\Pp$ the partition into connected components).

	By universal property, this gives us a morphism $f\colon C \to B$, and it is the inverse of $\varphi$ by the same proof as~\cite{Meunier2023}. 
	Since $\varphi$ is a morphism of quantum groups, we obtain the desired result.
\end{proof}

Let us fix some notations. 
For $(X_1,x_1),\ldots,(X_n,x_n)$ connected rooted graphs for some $n\geq 1$, we write $(X,R) = \bigoplus_{i=1}^n (X_i,x_i)$ when $X = \bigoplus_{i=1}^m X_i$ is the disjoint union of the $X_i$ and $R = \{x_1,\ldots,x_m\}$. 

Let $(G,R)$ be a rooted graph. 
Let $G = \bigoplus_{i=1}^k G_i$ be its decomposition into its connected components and let $\{ r_i \} = R\cap V(G_i)$ for every $1\leq i\leq k$. 
Let $I_G = \{1,\ldots,k\}$ and let $\Jj_G$ be the partition of $I_G$ into the equivalence classes of the relation $i\sim_G j$ if and only if $(G_i,r_i) \qi (G_j,r_j)$. 
We write $\Jj_G = \{J_1,\ldots,J_m\}$. 

\begin{proposition}\label{thm:qu_sums_rooted_graphs}
	We have
	\[ \Qu(G,R) = \bigast_{l=1}^m \Qu\left(\bigoplus_{i\in J_l} (G_i,r_i)\right)\]
	with the natural action. 
	Moreover, if $(G_i,r_i) \not \qi (G_j,r_j)$ for $i\neq j$, then, given $a_1,\ldots,a_k \geq 0$, we have
	\[ \Qu\left(\bigoplus_{i=1}^k a_i.(G_i,r_i)\right) = \bigast_{i=1}^k \Qu(G_i,r_i)\wr S_{a_i}^+\] with the natural action. 
\end{proposition}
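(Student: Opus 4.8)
The plan is to reduce the second identity to the first one together with~\Cref{lem:qu_rooted_sums}, and to prove the first identity by showing that every magic unitary adapted to $(G,R)$ is block-diagonal with respect to the partition $\Jj_G$ into quantum-isomorphism classes of rooted components; the free product decomposition is then a formal consequence of the universal properties. Indeed, once the first identity is established the second follows at once: if the $(G_i,r_i)$ are pairwise non-$\qi$, then in $\bigoplus_i a_i.(G_i,r_i)$ the classes of $\Jj$ are exactly the packets of $a_i$ mutually isomorphic (hence $\qi$) copies of each $(G_i,r_i)$, so the first identity gives $\bigast_i \Qu(a_i.(G_i,r_i))$, and each factor is rewritten as $\Qu(G_i,r_i)\wr S_{a_i}^+$ via~\Cref{lem:qu_rooted_sums} (discarding the indices with $a_i=0$).

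For the first identity, write $\Qu(G,R) = (A,U)$ with $U = (u_{xa})_{x,a\in V(G)}$ and set $p_{ij} = u_{r_i r_j}$. The preliminary point I would establish is that $U$ quantum-permutes the connected components, with induced permutation given precisely by the root block $(p_{ij})$: since $U$ preserves $R$ and each component carries exactly one root, the component map $\sum_{a\in V(G_j)}u_{r_i a}$ collapses to $u_{r_i r_j}$. A standard connectedness argument (see~\cite{freslon2025blockstructures}) shows $\sum_{a\in V(G_j)}u_{xa}$ is independent of $x\in V(G_i)$, hence equals $p_{ij}$, so that $u_{xa}\leq p_{ij}$ for every $x\in V(G_i)$, $a\in V(G_j)$; in particular $u_{xa} = p_{ij}u_{xa} = u_{xa}p_{ij}$. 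The orthogonality relations within the root magic unitary then give $p_{ij}u_{xa}=0$ when $x\notin V(G_i)$ and $u_{xa}p_{ij}=0$ when $a\notin V(G_j)$.

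The heart of the argument is the claim that $p_{ij}=u_{r_i r_j}\neq 0$ forces $(G_i,r_i)\qi(G_j,r_j)$. Working in a $C^*$-algebra in which $p:=p_{ij}$ is a nonzero projection, I would consider the corner $pAp$ and the matrix $W = (u_{xa})_{a\in V(G_j),\,x\in V(G_i)}$. By the previous paragraph each entry already satisfies $u_{xa}=pu_{xa}p$, so $W$ has coefficients in $pAp$; its row sums over $V(G_i)$ and column sums over $V(G_j)$ both equal $p$ (using root preservation to identify them with $p_{ij}$), so $W$ is a magic unitary with unit $p$. Since edges of $G$ only join vertices of a common component, the relation $U\Adj(G)=\Adj(G)U$ restricts to $W\Adj(G_i)=\Adj(G_j)W$, while root preservation gives $w_{r_j r_i}=p\neq 0$ together with the mark-preservation of $W$. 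Thus $W$ is a quantum isomorphism of rooted graphs $(G_i,r_i)\to(G_j,r_j)$, proving the claim. Taking the contrapositive, if $i$ and $j$ lie in distinct classes of $\Jj_G$ then $p_{ij}=0$, whence $u_{xa}=0$ for all $x\in V(G_i)$, $a\in V(G_j)$: the matrix $U$ is block-diagonal along $\Jj_G$.

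With block-diagonality in hand I would conclude via the universal properties. Writing $U=\Diag(U^{(1)},\dots,U^{(m)})$, each diagonal block $U^{(l)}$ is a magic unitary in $A$ commuting with $\Adj(\bigoplus_{i\in J_l}G_i)$ and preserving the roots, hence induces a morphism $\Qu(\bigoplus_{i\in J_l}(G_i,r_i))\to\Qu(G,R)$; conversely the block-diagonal matrix assembled from the fundamental representations of the factors satisfies all defining relations of $\Qu(G,R)$. Through the universal property of the free product of quantum permutation groups these assemble into mutually inverse morphisms, which are checked to intertwine the comultiplications on the generators (the vanishing of off-diagonal entries making $\Delta$ itself block-respecting), giving the first identity with the natural action. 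The main obstacle is the corner construction of the third paragraph — in particular, arguing cleanly at the $C^*$-level that a nonzero root coefficient yields a genuine quantum isomorphism of the rooted components; the remaining steps are bookkeeping with magic unitaries and universal properties.
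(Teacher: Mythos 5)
Your proposal is correct and follows essentially the same route as the paper: both reduce the second identity to the first via \Cref{lem:qu_rooted_sums}, and both establish the first by showing that $U$ preserves connected components, that the induced component permutation matrix coincides with the root block $(u_{r_ir_j})$, and that a nonzero corner $p_{ij}$ yields a quantum isomorphism of rooted components, forcing block-diagonality along $\Jj_G$. The corner construction you flag as the main obstacle is precisely \Cref{thm:partitioned_graphs}, which the paper invokes directly rather than re-deriving.
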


\begin{proof}
	Let $\Qu(G,R) = (A,U)$. 
	Recall that, by~\Cref{lem:mu_partition_connected_components}, $U$ preserves the partition of $G$ into connected components. 
	Hence by~\Cref{thm:partitioned_graphs} if $u_{xy}\neq 0$ for some $x\in V(G_i)$ and $y\in V(G_j)$, then $U_{ij} = U[V(G_i),V(G_j)]$ is a quantum isomorphism from $G_j$ to $G_i$ with values in $p_{ij}Ap_{ij}$, where $p_{ij}$ is equal to the sum of the rows or columns of $U_{ij}$. 
	Since $U$ stabilises $R$, we have that $u_{r_ir_j} = p_{ij}$, so $U_{ij}$ is a quantum isomorphism of rooted graphs from $(G_j,r_j)$ to $(G_i,r_i)$. 
	Hence $U$ is diagonal by block, with every block indexed by $J_l\times J_l$ for some $1\leq l\leq m$. 
	This implies that $\Qu(G,R) = \bigast_{l=1}^m \Qu(\bigoplus_{i\in J_l} (G_i,r_i))$.

	Now, the second case corresponds to the case where if $i$,$j\in \Jj_l$ for some $1\leq l\leq m$, then $(G_i,r_i)\simeq(G_j,r_j)$. 
	Hence it follows from the first case and~\Cref{lem:qu_rooted_sums}. 
\end{proof}

\subsection{Superrigidity and tractability} \label{subsec:superrigid_tractable}

We define here the terms that are informally referred to as \emph{noncommutative properties} or \emph{quantum properties} of graphs.

Let $ G $ be a graph. 
Recall that the abelianisation of $\Qu(G)$ is the Gelfand dual $C(\Aut(G))$ of the classical automorphism group of $G$. 
Hence, we have that $\Qu(G) = C(\Aut(G))$ if the algebra of $\Qu(G)$ is commutative. 
We say that $G$ has \emph{quantum symmetry} if $ \Qu(G) \neq C(\Aut(G)) $, equivalently, if the underlying algebra of $\Qu(G)$ is not commutative. 
A graph $ G $ is \emph{asymmetric} if $\Aut(G)$ is trivial. 
Similarly, $G$ is \emph{quantum asymmetric} if $\Qu(G)$ is trivial. 
We say that $G$ satisfies \emph{Schmidt's criterion} if $ G $ admits two non-trivial automorphisms with disjoint supports. 
This criterion was introduced in~\cite{schmidt2020criterion} where it is shown that a graph satisfying it has quantum symmetry.

Now, let us define some noncommutative properties for families of graphs, following~\cite{Meunier2023}. 
Let $\Ff$ be a family of graphs, we say that $\Ff$ satisfies the axiom:
\begin{itemize}
	\item (QA) -- for \emph{quantum asymmetry} -- if a graph in $\Ff$ is quantum asymmetric if and only if it is asymmetric;
	\item (SA) -- for \emph{Schmidt's alternative} -- if a graph in $\Ff$ has quantum symmetry if and only if it satisfies Schmidt's criterion;
	\item (QI) -- for \emph{quantum isomorphism} -- if two graphs in $ \mathcal F $ are quantum isomorphic if and only if they are isomorphic;
	\item (SR) -- for \emph{supperrigidity} -- if whenever a graph is quantum isomorphic to a graph in $\Ff$, it is isomorphic to it. 
\end{itemize}

Note that (SR) is a strengthening of (QI). One can also show that (SA) implies (QA).

We say that a family $\Ff$ of graphs is \emph{superrigid tractable} family if (SA) and (SR) (and thus all the axioms) hold for $\Ff$.

Finally, one can naturally extend the definitions above to the setting of rooted graphs (or more generally, marked graphs), which we avoid recalling here. 
It is also straightforward that a rooted graph satisfying the rooted version of Schmidt's criterion has quantum symmetry.

\section{Quantum automorphism group of partitioned graphs}\label{sec:partitioned_graphs}

Partitions appear naturally in the study of the quantum automorphism groups of graphs, the typical example being the partition of a graph into its connected components. More generally, a useful technique for studying the noncommutative properties of a given graph $ G $ is to find an appropriate partitioned graph, typically based on some decomposition of $ G $, that has similar noncommutative properties to $ G $, and use the extra structure of the partitioned graph to study these properties. 
In~\cite{freslon2025blockstructures}, we defined partition graphs. In this section, we define and study their quantum automorphism groups.

A \emph{partitioned graph} is a pair $(G, \Pp)$ formed by a graph $G$ and a partition $\Pp$ of its vertex set. We denote by $\sim_{\Pp}$ the equivalence relation on $V(G)$ whose equivalence classes are the cells of $\Pp$. 
Let $ (G, \Pp) $ and $ (H, \Qq) $ be two partitioned graphs. A quantum isomorphism $ U $ from $ G $ to $ H $ is said to be a \emph{quantum isomorphism of partitioned graphs} if for $x, y \in V(G)$ and $a, b\in V(H)$ we have $ u_{ax}u_{by} = 0 $ if $x\sim_{\Pp} y$ and $a\not \sim_{\Qq} b$ or if $ x\not \sim_{\Pp} y$ and $a \sim_{\Qq} b $. 
In particular, when $ (H, \Qq) = (G, \Pp) $, we say that $ U $ is a magic unitary \emph{adapted to $(G,\Pp)$}. Notice that it means in particular that $ U $ commutes with the adjacency matrix of $ G $. This naturally yields to the following definition:

\begin{definition} \label{def:qaut_partitions}
	Given a partitioned graph $(G, \Pp)$, we define $\Qu(G, \Pp) = (A, U)$ where $ A $ is the universal unital $*$-algebra generated by elements $U = (u_{ij})_{i, j\in V(G)}$ such that
	\begin{enumerate}
		\item The matrix $U $ is a magic unitary;
		\item $U$ is adpated to $(G, \Pp)$.
	\end{enumerate}
\end{definition}

Note that this can also be defined as the quotient of $\Qu(G)$ by the relations making $U$ adapted to $(G, \Pp)$. 

\begin{remark}
	The setting of partitioned graphs is useful because of its generality. Indeed, observe that we do not require $U$ to satisfy $u_{xy} = 0$ when $x$ and $y$ belong to different cells of the partition $\Pp$, so that this is more general, for instance, than quantum isomorphisms of coloured graphs.
\end{remark}

Let us check that the definition above indeed defines a compact quantum group.

\begin{lemma}\label{lem:qu_partitioned_graphs}
	The pair $\Qu(G, \Pp) = (A, U)$ is a quantum permutation group. In other words, there exists a unital $*$-homomorphism $\Delta \colon A \to A\otimes A$ such that for all $x$, $y\in V(G)$
	\begin{equation*}
		\Delta(u_{xy}) = \sum_{z\in V(G)} u_{xz}\otimes u_{zy}.
	\end{equation*}
\end{lemma}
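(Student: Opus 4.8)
The plan is to use the universal property of the algebra $A$ underlying $\Qu(G,\Pp)$. Following the standard recipe for quantum permutation groups, I would introduce the matrix $W = (W_{xy})_{x,y\in V(G)}$ with entries in $A\otimes A$ defined by
\[ W_{xy} = \sum_{z\in V(G)} u_{xz}\otimes u_{zy}, \]
so that the desired formula $\Delta(u_{xy}) = \sum_z u_{xz}\otimes u_{zy}$ holds by construction. If I can show that $W$ satisfies the three defining relations of $\Qu(G,\Pp)$ listed in~\Cref{def:qaut_partitions}, namely that $W$ is a magic unitary adapted to $(G,\Pp)$, then the universal property of $A$ yields a unique unital $*$-morphism $\Delta\colon A\to A\otimes A$ sending each $u_{xy}$ to $W_{xy}$, which is exactly the comultiplication sought.

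Two of these relations are inherited verbatim from the proof that $\Qu(G)$ is a quantum permutation group. The magic-unitary conditions for $W$ use only the magic-unitary relations for $U$: self-adjointness is immediate, idempotency follows from $u_{xz}u_{xw} = \delta_{zw}u_{xz}$ and $u_{zy}u_{wy} = \delta_{zw}u_{zy}$, which collapse the double sum in $W_{xy}^2$ back to $W_{xy}$, and the row and column sum conditions are equally routine. Likewise, the intertwining relation $W\Adj(G) = \Adj(G)W$ follows from $U\Adj(G) = \Adj(G)U$ by the same computation as in the unmarked case. I would simply refer to this standard verification rather than reproduce it.

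The one genuinely new point, and the step I expect to be the crux, is that $W$ is \emph{adapted} to $(G,\Pp)$. The key observation is that adaptedness of $U$ (with $H=G$ and $\Qq=\Pp$) can be rephrased as: $u_{ax}u_{by}\neq 0$ forces $x\sim_\Pp y \Leftrightarrow a\sim_\Pp b$. Expanding
\[ W_{ax}W_{by} = \sum_{z,w\in V(G)} u_{az}u_{bw}\otimes u_{zx}u_{wy}, \]
I would apply this equivalence to each tensor leg: a summand survives only if $a\sim_\Pp b \Leftrightarrow z\sim_\Pp w$ (from the first leg) and $z\sim_\Pp w \Leftrightarrow x\sim_\Pp y$ (from the second leg), which together force $a\sim_\Pp b \Leftrightarrow x\sim_\Pp y$. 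Consequently, whenever $x\sim_\Pp y$ while $a\not\sim_\Pp b$, or $x\not\sim_\Pp y$ while $a\sim_\Pp b$, every summand vanishes and $W_{ax}W_{by} = 0$. This is precisely the adaptedness of $W$ to $(G,\Pp)$.

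With all three relations checked, the universal property of $A$ produces the unique unital $*$-morphism $\Delta$ with $\Delta(u_{xy}) = W_{xy} = \sum_z u_{xz}\otimes u_{zy}$, completing the proof that $\Qu(G,\Pp)$ is a quantum permutation group. The only subtlety lies in the adaptedness computation, but the equivalence reformulation of the partition relation makes it a clean consequence of the adaptedness of $U$.
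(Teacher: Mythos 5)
Your proposal is correct and follows essentially the same route as the paper: define the candidate matrix $W_{xy}=\sum_z u_{xz}\otimes u_{zy}$, note that the magic unitary and commutation relations are routine, verify adaptedness as the only nontrivial point, and invoke the universal property of $A$. Your adaptedness check via chaining the equivalences $a\sim_\Pp b\Leftrightarrow z\sim_\Pp w\Leftrightarrow x\sim_\Pp y$ is just a slight repackaging of the paper's case split (for each summand, either the left or the right tensor leg vanishes), so the two arguments coincide in substance.
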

\begin{proof}
	Let $B = A\otimes A$ be the unital algebraic tensor product of $*$-algebras of $A$ with itself, and let $V = (v_{xy})_{x,y\in V(G)} \in \Mm_{V(G)}(B)$ be defined by $v_{xy} = \sum_{z\in V(G)} u_{xz}\otimes u_{zy}$. We prove that $ V $ satisfies the properties of \Cref{def:qaut_partitions}. The first item, as well as the fact that $ V $ and $ \Adj(G) $ commute are straightforward computations. Assuming that $x\sim_{\Pp} y$ and $a\not \sim_{\Pp} b$, we have
	\begin{equation*}
		v_{xa}v_{yb} = \sum_{s,t\in V(G)} u_{xs}u_{yt}\otimes u_{sa}u_{tb}.
	\end{equation*}
	Now if $s\sim_{\Pp} t$, we have $u_{sa}u_{tb} = 0$, and if $s\not \sim_{\Pp} t$, then $u_{xs}u_{yt} = 0$. This shows that $v_{xa}v_{yb} = 0$. One proves similarly that $v_{ax}v_{by} = 0$. By universality of $A$, there exists a (unique) unital $*$-morphism $\Delta \colon A\to A\otimes A$ such that $\Delta(u_{xy}) = v_{xy}$ for any $x$, $y\in V(G)$, which concludes the proof.
\end{proof}

If $\Pp$ consists of the whole graph, then clearly $\Qu(G, \Pp) = \Qu(G)$. There is a more interesting instance of that equality which we record now for later use.

\begin{lemma}\label{lem:mu_partition_connected_components}
	Let $G$ be a graph, let $\Pp$ be the partition of $V(G)$ into the vertex sets of its connected components, and let $U$ be a magic unitary. If $U$ commutes with $\Adj(G)$, it preserves $\Pp$. In particular, there is a canonical isomorphism $\Qu(G) \simeq \Qu(G,\Pp)$.
\end{lemma}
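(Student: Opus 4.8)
The plan is to show that if a magic unitary $U$ commutes with $\Adj(G)$, then $u_{xy} = 0$ whenever $x$ and $y$ lie in different connected components of $G$. Once this is established, $U$ is automatically adapted to the partition $\Pp$ into connected components in the sense of \Cref{def:qaut_partitions}: indeed the adaptedness conditions $u_{ax}u_{by} = 0$ (when $x\sim_\Pp y$, $a\not\sim_\Pp b$, and symmetrically) follow immediately once any coefficient $u_{ax}$ pairing vertices in distinct components vanishes. This would give that every generator of $\Qu(G)$ satisfies the defining relations of $\Qu(G,\Pp)$, so the natural surjection $\Qu(G)\to\Qu(G,\Pp)$ is also injective, yielding the canonical isomorphism $\Qu(G)\simeq\Qu(G,\Pp)$.

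The key vanishing statement is exactly where \Cref{lem:fulton} enters. Suppose $x$ and $y$ are in different connected components. Then $d_G(x,y) = +\infty$, since there is no walk between them. The plan is to argue by contraposition: if $u_{xy}\neq 0$, I would like to produce a contradiction using the distance-preservation property. First I would note that $u_{xy} = u_{xy}u_{xy}$ (since $u_{xy}$ is a projection, by condition (1) of a magic unitary), so $u_{xy}u_{xy}\neq 0$, and applying \Cref{lem:fulton} with the four vertices $x,x,y,y$ gives $d_G(x,x) = d_G(y,y)$, which is the trivial identity $0=0$ and not yet useful. The more productive move is to use a third vertex: pick any vertex $z$ in the same component as $x$ with $z\neq x$ (if the component of $x$ is a single vertex, one argues directly). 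Since $\sum_k u_{xk} = 1$ over $k$ in the component structure, one can find a $w$ with $u_{zw}\neq 0$ and $u_{zw}u_{xy}\neq 0$ and compare finite and infinite distances to force a contradiction.

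Cleaner still, and what I would ultimately carry out, is the following direct route. Because distances between vertices in different components are infinite while distances within a component are finite, \Cref{lem:fulton} shows that any nonzero product $u_{xa}u_{yb}$ forces $d_G(x,y) = d_G(a,b)$; in particular a nonzero coefficient $u_{xa}$ forces (via pairing with itself or with a neighbouring coefficient) that finite distances map to finite distances. The precise statement I want is: if $u_{xa}\neq 0$ and $u_{yb}\neq 0$ with $x,y$ in the same component, then $a,b$ are in the same component, since otherwise $d_G(a,b)=\infty$ while $d_G(x,y)<\infty$ contradicts \Cref{lem:fulton} (after checking the product $u_{xa}u_{yb}$ is nonzero, which requires choosing $y$ along a shortest $x$-to-$z$ path and using the column relations to propagate nonvanishing). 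Summing over a fixed component then shows $U$ is block diagonal with respect to $\Pp$, and the claimed adaptedness and isomorphism follow.

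The main obstacle I anticipate is the propagation-of-nonvanishing step: \Cref{lem:fulton} only constrains distances when a specific product of coefficients is already known to be nonzero, so I must carefully manufacture nonzero products to apply it. This is a standard but delicate bookkeeping argument using the magic unitary relations (row/column sums equal to $1$, orthogonality of distinct entries in a row or column) to walk from $x$ to its component neighbours while keeping the relevant products nonzero; the cleanest way is likely to fix a component $C$, set $p = \sum_{x\in C}\sum_{a\in C} u_{xa}$-type partial sums, and show directly that $u_{xa}=0$ for $x\in C$, $a\notin C$ by contradiction with \Cref{lem:fulton} applied to $x$ and a well-chosen neighbour.
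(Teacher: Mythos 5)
Your overall strategy contains a genuine error: the ``key vanishing statement'' you set out to prove --- that $u_{xy}=0$ whenever $x$ and $y$ lie in different connected components --- is false. Take $G = 2.K_1$ (two isolated vertices $1,2$); every magic unitary commutes with $\Adj(G)=0$, and the fundamental magic unitary of $S_2^+$ has $u_{12} = 1 - u_{11} \neq 0$. More generally, whenever $G$ has two isomorphic components, a magic unitary commuting with $\Adj(G)$ can have nonzero entries pairing vertices across those components. The same example kills the intermediate claim in your ``cleaner still'' paragraph: with $x=y=1$, $a=1$, $b=2$ one has $u_{xa}\neq 0$ and $u_{yb}\neq 0$ with $x\sim y$ but $a\not\sim b$. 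This is why your anticipated ``propagation-of-nonvanishing'' step cannot be carried out --- the products you would need to be nonzero are genuinely zero (or the entries you want to kill are genuinely nonzero). You have misread what ``preserves $\Pp$'' means here: as the remark after \Cref{def:qaut_partitions} stresses, being adapted to $(G,\Pp)$ does \emph{not} require $u_{xy}=0$ for $x\not\sim_\Pp y$; it only requires the product conditions $u_{ax}u_{by}=0$ when exactly one of the pairs $(x,y)$, $(a,b)$ lies in a common cell.

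Once the target is stated correctly, the proof is a one-line application of \Cref{lem:fulton} and needs no bookkeeping at all: if $x\sim_\Pp y$ and $a\not\sim_\Pp b$, then $d_G(x,y)<\infty$ while $d_G(a,b)=+\infty$, so $d_G(a,b)\neq d_G(x,y)$ and the contrapositive of \Cref{lem:fulton} forces $u_{ax}u_{by}=0$; the symmetric case is identical. Your final step --- that adaptedness of the fundamental representation of $\Qu(G)$ makes the canonical surjection $\Qu(G)\to\Qu(G,\Pp)$ an isomorphism --- is fine. (For comparison, the paper simply cites Lemma 3.4 of \cite{freslon2025blockstructures}, which is this product argument.)
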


\begin{proof}
	This is a direct consequence of~\cite[Lemma 3.4]{freslon2025blockstructures}.
\end{proof}

Let $(G,\Pp_{G})$ and $(H,\Pp_{H})$ be two partitioned graphs, and let $U$ be a quantum isomorphism from $(G,\Pp_G)$ to $(H,\Pp_H)$. We define the matrix $P(U) = (p_{CD})_{C\in \Pp_{G}, D\in \Pp_{H}}$ as follows (see~\cite[Sec 3]{freslon2025blockstructures} for more details). For $C\in \Pp_{H}$ and $D\in \Pp_{G}$, we have for any $ a \in C$ and $ x \in D$
\begin{equation*}
	\sum_{ c \in C} u_{cx} = p_{CD} = \sum_{d \in D} u_{ad}.
\end{equation*}

The following fundamental result was obtained in~\cite[Theorem 3.3]{freslon2025blockstructures}. 

\begin{theorem}\label{thm:partitioned_graphs}
	Let $(G, \Pp_{G})$ and $(H, \Pp_{H})$ be two partitioned graphs, and let $U$ be a quantum isomorphism of partitioned graphs from $(G, \Pp_{G})$ to $(H, \Pp_{H})$ with coefficients in a unital $C^{*}$-algebra $X$. Let moreover $C\in \Pp_{H}$ and $D\in \Pp_{G}$ and set $W = W_{C, D} = U[C,D]$ and $p = p_{CD}$. 
	Then,
	\begin{enumerate}
		\item The matrix $P = (p_{KL})_{K\in \Pp_{H}, L\in \Pp_{G}}$ is a magic unitary with coefficients in $X$;
		\item If $p\neq 0$, then it is the unit of the $C^{*}$-subalgebra of $X$ generated by the coefficients of $W$;
		\item If $p = 0$, then $W = 0$;
		\item The matrix $W$ is a magic unitary with coefficients in the $C^{*}$-algebra $pXp$;
		\item $\Adj(H)[C,C]W = W\Adj(G)[D,D]$.
	\end{enumerate}
	In particular, there is a bijection $\varphi \colon \Pp_{G} \to \Pp_{H}$ such that $W_{D, \varphi(D)}$ is a quantum isomorphism from $G[D]$ to $H[\varphi(D)]$ for every $D\in \Pp_{G}$.
\end{theorem}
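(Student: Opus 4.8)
The plan is to work entirely from the defining relations of a quantum isomorphism of partitioned graphs: the magic unitary axioms, the intertwining $U\Adj(G)=\Adj(H)U$, and the orthogonality relations $u_{ax}u_{by}=0$ forced by incompatible membership in the two partitions (the first relation when $x\sim_{\Pp_G}y$ and $a\not\sim_{\Pp_H}b$, the second when $x\not\sim_{\Pp_G}y$ and $a\sim_{\Pp_H}b$). The foundational step, which underlies everything, is to verify that $p_{CD}$ is well defined, namely that $\sum_{c\in C}u_{cx}$ is independent of $x\in D$, that $\sum_{d\in D}u_{ad}$ is independent of $a\in C$, and that these two projections coincide.

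For representative-independence, fix $x,y\in D$ and set $s_x=\sum_{c\in C}u_{cx}$ and $s_y=\sum_{c\in C}u_{cy}$; each is a projection, being a sum of pairwise orthogonal projections in a single column. Writing $1-s_y=\sum_{b\notin C}u_{by}$ from the column relation, the product $s_x(1-s_y)=\sum_{c\in C,\,b\notin C}u_{cx}u_{by}$ vanishes term by term, since $x\sim_{\Pp_G}y$ while $c\not\sim_{\Pp_H}b$; hence $s_x=s_xs_y$, and symmetrically $s_y=s_ys_x$, so taking adjoints gives $s_x=s_y$. The mirror argument gives independence of $\sum_{d\in D}u_{ad}$ in $a$. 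To identify the two, write $p=\sum_{c\in C}u_{cx}$ and $q=\sum_{d\in D}u_{ad}$; then $p(1-q)$ and $q(1-p)$ vanish by the complementary partition relation, so $p=pq$ and $q=qp$, whence $p=q=:p_{CD}$. Claim (1) is then immediate: the row and column sums of $P$ recover $\sum_{d\in V(G)}u_{ad}=1$ and $\sum_{c\in V(H)}u_{cx}=1$, and in a $C^*$-algebra a row or column of projections summing to $1$ is automatically orthogonal.

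For claims (2)--(4), the key point is that $p_{CD}$ is a local unit for the coefficients of $W$: using the two expressions for $p_{CD}$ and magic-unitary orthogonality, $p_{CD}\,u_{cd}=(\sum_{c'\in C}u_{c'd})u_{cd}=u_{cd}$ and $u_{cd}\,p_{CD}=u_{cd}(\sum_{d'\in D}u_{cd'})=u_{cd}$. This places every $u_{cd}$ in $p_{CD}Xp_{CD}$ and makes $W$ a magic unitary there with unit $p_{CD}$, giving (2) and (4); and if $p_{CD}=0$, then $\sum_{c\in C}u_{cd}=0$ is a sum of positive elements, forcing each $u_{cd}=0$, which is (3).

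The main obstacle is claim (5), since the block adjacency sums do not match on the nose. I would start from the global identity $\sum_{c'\sim_H c}u_{c'd}=\sum_{d'\sim_G d}u_{cd'}$ and multiply on the left by $p_{CD}$. On the left side, $p_{CD}u_{c'd}$ equals $u_{c'd}$ when $c'\in C$ and $0$ otherwise, leaving exactly $(\Adj(H)[C,C]W)_{cd}$. On the right side, the terms with $d'\notin D$ give $p_{CD}u_{cd'}=\sum_{c'\in C}u_{c'd}u_{cd'}=0$ by the partition relation (as $d\not\sim_{\Pp_G}d'$ while $c'\sim_{\Pp_H}c$), while the terms with $d'\in D$ are preserved by the local unit, leaving exactly $(W\Adj(G)[D,D])_{cd}$; this proves the intertwiner. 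Finally, for the bijection $\varphi$ I would evaluate $P$ in a state of $X$ to obtain a doubly stochastic matrix over $\Pp_H\times\Pp_G$ whose support lies in $\{(C,D):p_{CD}\neq 0\}$, and invoke Birkhoff--von Neumann to extract a permutation supported there, i.e. a bijection $\varphi$ with $p_{\varphi(D)D}\neq 0$. For such pairs, claims (2), (4) and (5) exhibit $U[\varphi(D),D]$ as a magic unitary over the unital algebra $p_{\varphi(D)D}Xp_{\varphi(D)D}$ intertwining $\Adj(G[D])$ and $\Adj(H[\varphi(D)])$, hence a quantum isomorphism $G[D]\to H[\varphi(D)]$.
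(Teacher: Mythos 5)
Your proof is correct. Note first that the paper does not actually prove this statement: it imports it verbatim from the companion paper (cited there as Theorem 3.3 of \emph{Block structures of graphs and quantum isomorphism}), so there is no in-text proof to compare against. Judged on its own, your argument is complete and each step checks out: the well-definedness of $p_{CD}$ via the two products $s_x(1-s_y)$ and $p(1-q)$ uses exactly the two halves of the partitioned-isomorphism relation in the right places; the local-unit computation $p_{CD}u_{cd}=u_{cd}=u_{cd}p_{CD}$ correctly yields (2)--(4) (and $p_{CD}=\sum_{c\in C}u_{cd}$ does lie in the subalgebra generated by the coefficients of $W$, so it is genuinely \emph{the} unit there); and your treatment of (5), splitting the global intertwining identity $\sum_{c'\sim_H c}u_{c'd}=\sum_{d'\sim_G d}u_{cd'}$ after left-multiplying by $p_{CD}$ and killing the cross terms $d'\notin D$ via the partition relation, is exactly the point where the block adjacency matrices appear. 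The state-plus-Birkhoff--von Neumann extraction of the bijection $\varphi$ is valid (any state of the unital $C^*$-algebra $X$ turns the magic unitary $P$ into a doubly stochastic matrix whose support refines $\{p_{CD}\neq 0\}$); one could alternatively run a Hall-type argument directly on the nonzero pattern of $P$, but your route is cleaner. The only cosmetic remark: the statement writes $W_{D,\varphi(D)}$ while the notation $W_{C,D}=U[C,D]$ has the $\Pp_H$-index first, so the object you correctly produce is $U[\varphi(D),D]$; this is a typo in the statement, not a gap in your proof.
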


A graph automorphism respecting a given partition $\Pp$ induces a permutation between the cells of $\Pp$, thereby providing a group homomorphism $\Aut(G, \Pp)\to S_{\vert\Pp\vert}$. We will now prove a quantum analogue of this.

\begin{theorem}\label{thm:snplus_qu_partitioned}
	Let $(G,\Pp)$ be a partitioned graph, and let $\Pp = \{C_{1},\ldots,C_{k}\}$. Let $\Qu(G,\Pp) = (A,U)$ and $S_{k}^{+} = (B, S)$. Then, there exists a unique quantum group morphism $\lambda_{\Pp} \colon S_{k}^{+} \to \Qu(G, \Pp)$ such that $\lambda_{\Pp}(s_{ij}) = [P(U)]_{ij}$. 
\end{theorem}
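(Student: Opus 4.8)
The plan is to build $\lambda_\Pp$ as a $*$-homomorphism via the universal property of $S_k^+$, and then to verify by hand that it respects the comultiplications on generators. First I would record that $P = P(U)$ is a genuine $k\times k$ magic unitary with coefficients in $A$: since $U$ is adapted to $(G,\Pp)$ it is, in particular, a quantum isomorphism of partitioned graphs from $(G,\Pp)$ to itself, so this is exactly part (1) of \Cref{thm:partitioned_graphs}. The required relations (self-adjointness, idempotency, orthogonality of the rows and columns, and that each row and column sums to $1$) are purely algebraic and hold in the $*$-algebra $A$ directly from the magic unitary relations on $U$ together with adaptedness, so the fact that $A$ is only a $*$-algebra rather than a $C^*$-algebra causes no trouble. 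The one feature I will use repeatedly is the well-definedness of the entries: for cells $C_i,C_l\in\Pp$ the element $\sum_{c\in C_i}u_{cz}$ is independent of the choice of $z\in C_l$ and equals $p_{C_iC_l}$.

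Next I would invoke the universal property of $S_k^+ = (B,S)$. Recall that $B$ is the universal unital $*$-algebra generated by the entries of a $k\times k$ magic unitary; since $P$ is such a magic unitary with coefficients in $A$, there is a unique unital $*$-homomorphism $\lambda_\Pp\colon B\to A$ with $\lambda_\Pp(s_{ij}) = p_{C_iC_j}$. This already delivers both existence and uniqueness of $\lambda_\Pp$ as a $*$-homomorphism taking the prescribed values on generators, and uniqueness as a quantum group morphism is then automatic because the $s_{ij}$ generate $B$. It remains to show that this $\lambda_\Pp$ intertwines the comultiplications, i.e.\ that $(\lambda_\Pp\otimes\lambda_\Pp)\circ\Delta_B = \Delta_A\circ\lambda_\Pp$. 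As both sides are $*$-homomorphisms $B\to A\otimes A$, it suffices to check equality on the generators $s_{ij}$. The left-hand side sends $s_{ij}$ to $\sum_{l}p_{C_iC_l}\otimes p_{C_lC_j}$, so the whole proof reduces to establishing
\[ \Delta_A\!\left(p_{C_iC_j}\right) = \sum_{l=1}^{k} p_{C_iC_l}\otimes p_{C_lC_j}. \]
To prove this I would fix a column representative $x\in C_j$, write $p_{C_iC_j} = \sum_{c\in C_i}u_{cx}$, apply the formula $\Delta_A(u_{cx}) = \sum_{z\in V(G)}u_{cz}\otimes u_{zx}$ from \Cref{lem:qu_partitioned_graphs}, and split the inner sum over $z$ according to the cell $C_l$ containing $z$. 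Pulling the first tensor leg out of the sum over $z\in C_l$ using $\sum_{c\in C_i}u_{cz} = p_{C_iC_l}$, and then collapsing $\sum_{z\in C_l}u_{zx} = p_{C_lC_j}$, recovers exactly the right-hand side.

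The main obstacle is precisely this last computation, and within it the only genuinely delicate point is the legitimacy of factoring $\sum_{c\in C_i}u_{cz}$ out of the sum over $z\in C_l$. This is valid only because that element does not depend on the representative $z$, which is the representative-independence of the entries of $P(U)$ guaranteed by the adaptedness of $U$ to $\Pp$; everything else is routine bookkeeping with the magic unitary relations. I would therefore make this independence explicit as the pivotal step, after which the identity above, and hence the intertwining of comultiplications, follows immediately.
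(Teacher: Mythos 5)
Your proposal is correct and follows essentially the same route as the paper: invoke \Cref{thm:partitioned_graphs} to get that $P(U)$ is a magic unitary, apply the universal property of $S_k^+$ to obtain the unique $*$-homomorphism, and verify the intertwining of comultiplications on generators by splitting $\sum_{z\in V(G)}$ over the cells and using representative-independence of the entries of $P(U)$ to factor out $p_{il}$. Your explicit attention to the fact that the magic unitary relations for $P(U)$ hold at the $*$-algebra level is a minor refinement, but the argument is the same.
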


\begin{proof}
	By~\Cref{thm:partitioned_graphs}, we have that $P(U) = (p_{ij})_{1\leq i,j\leq k}$ is a well-defined magic unitary, with $p_{ij}$ being the sum of rows or columns of the block $U[C_{i}, C_{j}]$. Hence, by the universal property of $B$, there exists a unique unital $*$-homomorphism $\lambda_{\Pp} \colon B\to A$ such that $\lambda_{\Pp}(s_{ij}) = p_{ij}$ for $1\leq i,j\leq k$. All we have to check is therefore that $\lambda_{\Pp}$ intertwines the comultiplications.
	
	Let $\Delta_{A}$ and $\Delta$ denote the comultiplications of $\Qu(G, \Pp)$ and $S_{k}^{+}$ respectively. If we fix for each $1\leq l\leq k$ an element $w_{l} \in C_{l}$, then for any $1\leq i,j\leq k$,
	\begin{align*}
		\Delta_{A}\circ\lambda_{\Pp}(s_{ij}) & = \Delta_{A}(p_{ij}) = \sum_{x\in C_{i}} \Delta_{A}(u_{xw_{j}}) = \sum_{z\in V(G)} \sum_{x\in C_{i}} u_{xz}\otimes u_{zw_{j}} \\
		& = \sum_{l=1}^{k} \sum_{z\in C_{l}} \sum_{x\in C_{i}} u_{xz}\otimes u_{zw_{j}} = \sum_{l=1}^{k} \sum_{z\in C_{l}} p_{il} \otimes u_{zw_{j}} = \sum_{l=1}^{k} p_{il}\otimes p_{lj} \\
		& = \sum_{l=1}^{k+1} \lambda_{\Pp}(s_{il})\otimes \lambda_{\Pp}(s_{lj}) = (\lambda_{\Pp}\otimes \lambda_{\Pp})\circ \Delta(s_{ij}),
	\end{align*}
	as claimed.
\end{proof}

\section{The $\Psi$-operation}\label{sec:psi}

Since the quantum automorphism group of a graph is the quantum automorphism group of a corresponding anchored graph by \Cref{prop:every_graph_anchored}, we now define a transformation $\Psi$ that given an anchored graph returns a rooted graph to obtain a decomposition of the quantum automorphism group of a graph. This operation is very close to the $\Gamma$-operation from~\cite[Section 4.3]{freslon2025blockstructures}, in particular, the underlying graphs of $ \Psi(G,Q) $ and $ \Gamma(G, Q) $ are isomorphic. However, $\Gamma$ returns an anchored graph which is more adapted to the context of block structures, and $\Psi$ returns a rooted graph which is the correct tool for our decomposition here.

Let us define $\Psi$. Let $ (G,Q)$ be an anchored graph. 
\begin{itemize}
	\item If $Q = \{r\}$ is a cut vertex, let $C_{1}, \ldots, C_{k}$ be the connected components of $G\setminus \{r\}$ (notice that $k\geq 2$). 
		For $1\leq i\leq k$, we set $G_{i} = G[V(C_{i})\cup\{r\}]$, that is, every $G_{i}$ is the graph obtained by adding back $r$ to $C_{i}$. 
		Let now
		\begin{equation*}
			G' = \bigoplus_{i=1}^{k} G_{i}
		\end{equation*}
		and note that for $1\leq i\leq k$, $G_{i}$ is exactly one connected component of $G'$, and that it contains a copy of $r$, that we will denote by $r_{i}$. 
		Setting $R = \{r_{1}, \ldots, r_{k}\}$ yields a rooted graph $(G', R) = \Psi(G,Q)$.
	\item Otherwise, $Q$ is a block of $G$. 
		Define $G'$ to be the graph obtained from $G$ by removing all edges in $Q$, that is, $V(G')= V(G)$ and $E(G')=E(G) \setminus \{ uv \mid u,v \in Q \}$. 
		Then $(G', Q) = \Psi(G,Q)$ is again a rooted graph.
\end{itemize}

The point of that construction is that it carries on to quantum isomorphisms. 
Let $(G,Q_G)$ and $(H,Q_H)$ be two connected anchored graphs, and let $U$ be a quantum isomorphism of anchored graphs from $(G,Q_G)$ to $(H,Q_H)$. Notice that the fact that the size of the anchor is preserved implies that $ Q_H $ is a cut vertex if and only if $ Q_G $ is so. 
Let us define a matrix $\Psi(U)$ in the following way ($\Psi(U)$ should be seen merely as a notation since strictly speaking, $\Psi$ here is not a function):
\begin{itemize}
\item If $Q_G = \{r\}$ (and hence $Q_H = \{s\}$), then $U = \Diag(U_{0}, 1)$, where $1= u_{sr}$, and we set $\Psi(U) = (U_{0}, P(U_{0}))$, where $P(U_{0})$ comes from the partition of the graph $G\setminus\{r\}$ into connected components.
\item If $Q_G$ is a block (hence $Q_H$ is a block of $H$), then we set $\Psi(U) = U$.
\end{itemize}

\begin{lemma}\label{lem:psi_u}
	Let $(G,Q_G)$ and $(H,Q_H)$ be two connected anchored graphs, and let $U$ be a quantum isomorphism from $(G,Q_G)$ to $(H,Q_H)$. 
	Then, $\Psi(U)$ is a quantum isomorphism of rooted graphs from $\Psi(G,Q_G)$ to $\Psi(H,Q_H)$.
\end{lemma}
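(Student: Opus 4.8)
The plan is to verify directly that $\Psi(U)$ is a magic unitary which stabilises the roots and commutes with the adjacency matrices of $\Psi(G,Q_G)$ and $\Psi(H,Q_H)$, treating the two cases of the definition of $\Psi$ separately. Since $U$ preserves the size of the anchor, $Q_G$ is a cut vertex if and only if $Q_H$ is, so the two cases match up.

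I would first dispose of the block case, which is essentially formal. Here $\Psi(U) = U$, and writing $M_{Q_G}$ for the adjacency matrix of the induced subgraph $G[Q_G]$ extended by zeros to $V(G)$ (and similarly $M_{Q_H}$), we have $\Adj(G') = \Adj(G) - M_{Q_G}$ and $\Adj(H') = \Adj(H) - M_{Q_H}$. Being adapted to the anchor, $U$ is block-diagonal along the partition $\{Q, Q^c\}$, so it stabilises the marks of the rooted graphs. Extracting from $U\Adj(G) = \Adj(H)U$ the part indexed by $Q_H\times Q_G$ gives $U[Q_H,Q_G]\,\Adj(G)[Q_G,Q_G] = \Adj(H)[Q_H,Q_H]\,U[Q_H,Q_G]$ (equivalently \Cref{thm:partitioned_graphs}(5) for the partition $\{Q,Q^c\}$). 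Since $M_{Q_G}$ and $M_{Q_H}$ are supported on the anchor blocks and $U$ is block-diagonal, this is exactly $U M_{Q_G} = M_{Q_H} U$, whence $U\Adj(G') = \Adj(H')U$. Thus $U$ is the required quantum isomorphism of rooted graphs.

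The cut-vertex case is the substantial one. Writing $Q_G = \{r\}$ and $Q_H = \{s\}$, the anchor condition together with the magic unitary relations forces $u_{sr} = 1$ and $U = \Diag(U_0, 1)$ with $U_0 = U[V(H)\setminus\{s\}, V(G)\setminus\{r\}]$. Extracting from $U\Adj(G) = \Adj(H)U$ the part indexed by the non-anchor vertices yields $U_0\Adj(G\setminus\{r\}) = \Adj(H\setminus\{s\})U_0$, so $U_0$ is a quantum isomorphism from $G\setminus\{r\}$ to $H\setminus\{s\}$. Applying \Cref{lem:mu_partition_connected_components} to the magic unitary $\left(\begin{smallmatrix} 0 & U_0^* \\ U_0 & 0 \end{smallmatrix}\right)$ on $(G\setminus\{r\})\oplus(H\setminus\{s\})$ obtained by doubling $U_0$ shows that $U_0$ respects the partitions into connected components $C_1,\dots,C_k$ of $G\setminus\{r\}$ and $D_1,\dots,D_l$ of $H\setminus\{s\}$; hence, by \Cref{thm:partitioned_graphs}, $P(U_0) = (p_{D_jC_i})$ is a well-defined magic unitary, which forces $k = l$. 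I would then assemble $\Psi(U)$ as the matrix $W$ on $V(H')\times V(G')$ equal to $U_0$ on the interior, to $w_{s_jr_i} = p_{D_jC_i}$ on the roots, and zero across: this is exactly the pair $(U_0, P(U_0))$. That $W$ is a magic unitary stabilising the roots follows from the corresponding properties of $U_0$ and $P(U_0)$, using $\sum_{x\neq r}u_{yx} = 1 = \sum_{a\neq s}u_{ax}$ since $u_{yr} = 0 = u_{sx}$ off the anchor.

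It remains to check $W\Adj(G') = \Adj(H')W$, and this is where the real work lies. Writing both adjacency matrices in block form with the roots first, the root-root blocks vanish (distinct roots lie in distinct components), the interior-interior block is the relation $U_0\Adj(G\setminus\{r\}) = \Adj(H\setminus\{s\})U_0$ already established, and the two root-interior blocks reduce, for $x$ in component $C_i$ and $y\in D_j$, to the single identity
\[ p_{D_jC_i}\,(a_G)_x = \sum_{y\in D_j}(a_H)_y\, u_{yx}, \]
where $a_G$ and $a_H$ are the $\{0,1\}$-valued adjacency vectors of $r$ in $G$ and of $s$ in $H$. To prove it, I would start from the $(s,x)$-entry of $U\Adj(G) = \Adj(H)U$, which reads $\sum_{y}(a_H)_y u_{yx} = (a_G)_x$, break the sum along the components $D_{j'}$, and feed in \Cref{thm:partitioned_graphs}(2): for $y\in D_{j'}$ and $x\in C_i$ the projection $p_{D_{j'}C_i}$ is the local unit of the corresponding block of $U_0$, so $u_{yx} = p_{D_{j'}C_i}u_{yx}$, while $p_{D_jC_i}p_{D_{j'}C_i} = 0$ for $j'\neq j$ by column orthogonality of $P(U_0)$. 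Left-multiplying by $p_{D_jC_i}$ then isolates the $D_j$-summand and yields the identity; the other root-interior block is symmetric, starting instead from $U_0 a_G = a_H$. The main obstacle is precisely this localisation step: one must transfer a global relation between the adjacency vectors of the cut vertices into a component-by-component statement, and the mechanism that makes this possible is that the entries of $P(U_0)$ act as mutually orthogonal local units on the blocks of $U_0$.
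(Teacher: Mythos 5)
Your proof is correct, but it takes a genuinely different route from the paper's. The paper disposes of the quantum-isomorphism part in one line by observing that $\Psi(U)=\Gamma(U)$ and invoking \cite[Lemma 4.8]{freslon2025blockstructures} for the $\Gamma$-operation, so that the only thing actually verified in the paper is that the roots are preserved (immediate from the construction in the cut-vertex case, and from $U=\Diag(U_0,U_1)$ in the block case). You instead give a self-contained verification: the block case via $\Adj(G')=\Adj(G)-M_{Q_G}$ and the extraction $U_1\Adj(G)[Q_G,Q_G]=\Adj(H)[Q_H,Q_H]U_1$, and the cut-vertex case via the identity $p_{D_jC_i}(a_G)_x=\sum_{y\in D_j}(a_H)_y u_{yx}$, obtained by localising the $(s,x)$-entry of $U\Adj(G)=\Adj(H)U$ with the orthogonal local units $p_{D_{j'}C_i}$ from \Cref{thm:partitioned_graphs}. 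This localisation is essentially the same computation as the relations $D_LW_0=W_1D_L$ and $LW_0=L$ appearing in the paper's proof of \Cref{thm:iso}, so your argument in effect re-proves the cited external lemma inside this paper. What the paper's approach buys is brevity and consistency with the companion paper; what yours buys is independence from that reference and an explicit record of exactly which commutation relations of $U$ are being used (in particular, the role of the entries of $P(U_0)$ as mutually orthogonal local units, which is also the engine of \Cref{thm:iso}). Two minor remarks: your doubling trick to apply \Cref{lem:mu_partition_connected_components} to $U_0$ is fine but could be replaced by a direct citation of \cite[Lemma 3.4]{freslon2025blockstructures}, which is stated for quantum isomorphisms; and when you write $u_{yx}=p_{D_{j'}C_i}u_{yx}$ you should note (as you implicitly do) that this also covers the degenerate case $p_{D_{j'}C_i}=0$, where the whole block vanishes by \Cref{thm:partitioned_graphs}(3).
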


\begin{proof}
	By~\cite[Lemma 4.8]{freslon2025blockstructures}, since $\Psi(U)=\Gamma(U)$, we have that $\Psi(U)$ is a quantum isomorphism so that we simply have to check that the roots are preserved. 
	If $Q_G$ is a cut vertex, this follows directly from the construction. In the other case, this follows from $\Psi(U) = U$ and the fact that $U = \Diag(U_{0}, U_{1})$ with $U_{1}$ indexed by $Q_H\times Q_G$.
\end{proof}

It is clear that the connected components of $\Psi(G,Q)$ all have strictly less vertices than $G$ itself, so that this decomposition can be used for inductive proofs. In particular, using it for the computation of the quantum automorphism groups of families of graphs requires relating $\Qu(\Psi(G,Q))$ to $\Qu(G,Q)$. Let us now establish this relationship.

\begin{theorem}\label{thm:quotient}
	Let $(G, Q)$ be a connected anchored graph. 
	Let $\Qu(G, Q) = (A,U)$, $\Qu(\Psi(G,Q)) = (B,W)$, and $V = \Psi(U)$. 
	Then there exists a unique surjective quantum group morphism $\varphi\colon \Qu(\Psi(G,Q)) \to \Qu(G,Q) $ satisfying $\varphi(w_{xy}) = v_{xy}$.
\end{theorem}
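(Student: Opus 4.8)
\section*{Proof proposal}

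The plan is to produce $\varphi$ from the universal property of $B=\Qu(\Psi(G,Q))$, and then to verify the two remaining requirements: compatibility with the comultiplications, and surjectivity. The starting observation is that applying \Cref{lem:psi_u} in the special case $(H,Q_H)=(G,Q)$ shows that $V=\Psi(U)$ is a quantum isomorphism of rooted graphs from $\Psi(G,Q)$ to itself; equivalently, $V$ is a magic unitary with coefficients in $A$ that is adapted to the rooted graph $\Psi(G,Q)$. Since these are exactly the defining relations of $B$, the universal property of $B$ yields a unique unital $*$-morphism $\varphi\colon B\to A$ with $\varphi(w_{xy})=v_{xy}$, and uniqueness of $\varphi$ is immediate from that same universal property.

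Next I would check that $\varphi$ intertwines the comultiplications. Writing $\Delta_A$ and $\Delta_B$ for the comultiplications of $A$ and $B$ and using $\Delta_B(w_{xy})=\sum_z w_{xz}\otimes w_{zy}$, this amounts to the identity $\Delta_A(v_{xy})=\sum_z v_{xz}\otimes v_{zy}$ for all vertices $x,y$ of $\Psi(G,Q)$. When $Q$ is a block, $V=U$ and this is literally the comultiplication relation of $\Qu(G,Q)$, so there is nothing to do. When $Q=\{r\}$ is a cut vertex, I would split according to whether $x,y$ are roots or ordinary vertices, recalling that $U=\Diag(U_0,1)$ and that $V$ stabilises the set of roots $R=\{r_1,\dots,r_k\}$, so that any entry of $V$ with exactly one root index vanishes. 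If exactly one of $x,y$ is a root then $v_{xy}=0$, and each summand $v_{xz}\otimes v_{zy}$ vanishes as well (for every $z$, one of the two factors has exactly one root index). If both $x,y$ are ordinary, the sum over the vertices $z$ of $\Psi(G,Q)$ reduces to a sum over the ordinary vertices and equals $\sum_{w\in V(G)\setminus\{r\}} u_{xw}\otimes u_{wy}$, which is $\Delta_A(u_{xy})$ since the discarded term $u_{xr}\otimes u_{ry}$ vanishes as $u_{xr}=0$. The only substantial case is $x=r_i$, $y=r_j$, where $v_{r_ir_j}=p_{ij}$ and the required identity $\Delta_A(p_{ij})=\sum_l p_{il}\otimes p_{lj}$ is precisely the computation carried out in the proof of \Cref{thm:snplus_qu_partitioned}, applied to the partition of $G\setminus\{r\}$ into its connected components.

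Finally, surjectivity reduces to checking that the $v_{xy}$ generate $A$. If $Q$ is a block, the $v_{xy}=u_{xy}$ are exactly the generators of $A$. If $Q=\{r\}$, the $v_{xy}$ include $u_{xy}$ for every pair of ordinary vertices, and the only generators of $A$ touching $r$ are $u_{rr}=1$ together with the vanishing off-diagonal entries $u_{xr}=u_{ry}=0$ arising from $U=\Diag(U_0,1)$; these lie in any unital $*$-subalgebra, so the image of $\varphi$ is all of $A$.

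I expect the only genuinely non-routine point to be the comultiplication identity for the root entries $p_{ij}$ in the cut-vertex case. Since this is exactly the algebraic core of \Cref{thm:snplus_qu_partitioned}, the essential work has in effect already been done, and everything else is bookkeeping about which entries of $\Psi(U)$ vanish.
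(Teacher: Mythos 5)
Your proposal is correct and follows essentially the same route as the paper: obtain $\varphi$ from the universal property of $\Qu(\Psi(G,Q))$ via \Cref{lem:psi_u}, check surjectivity from $U=\Diag(U_0,1)$, and verify the comultiplication identity case by case, with the root-entry case reduced to the computation $\Delta_A(p_{ij})=\sum_l p_{il}\otimes p_{lj}$. The only cosmetic difference is that the paper packages that last identity through an explicit morphism $S_{k+1}^+\to\Qu(G,Q)$ built from \Cref{thm:snplus_qu_partitioned} (after noting, via \Cref{lem:mu_partition_connected_components}, that $U$ is adapted to the partition $\{C_1,\dots,C_k,\{r\}\}$), whereas you rerun the computation directly; both are valid.
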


\begin{proof}
	Let $\Psi(G,Q) = (G',R)$. 
	By~\Cref{lem:psi_u}, the matrix $V$ is a magic unitary commuting with $\Adj(G')$ and stabilising $R$. 
	Therefore, by universality, there exists a unique unital $*$-homomorphism $\varphi\colon A\to B$ satisfying $\varphi(w_{xy}) = v_{xy}$ for any $x$, $y\in V(G')$. 

	Let us check that $\varphi$ is surjective. 
	If $Q = \{r\}$ with $r$ a cut vertex of $G$, then $U = \Diag(U_0,1)$, and since the coefficients of $U$ generate $A$ we have that the coefficients of $U_0$ generate $A$. 
	Moreover, $U_0$ is indexed by $V(G)\setminus \{r\} = V(G')\setminus R$, hence $$[U_0]_{xy} = u_{xy} = v_{xy} = \varphi(w_{xy})$$ for any $x$, $y\in V(G')\setminus R$, proving surjectivity. 
	In the second case, we have $V = U$ and $V(G) = V(G')$, so $\varphi$ is clearly surjective. 

	Let us now show that $\varphi$ is a quantum group morphism, i.e. it intertwines the comultiplications. 
	In the case where $ Q $ is a block of $G$, this follows from the fact that $\varphi(w_{xy}) = u_{xy}$ for any $x$, $y\in V(G)$.

	We therefore now assume that $Q = \{r\}$ with $r$ a cut vertex of $G$. We denote by $\Delta_A$ and $\Delta_B$ the comultiplications of $\Qu(G,Q)$ and $\Qu(\Psi(G,Q))$ respectively. Recall that by construction $V = \Diag(U_0,U_1)$, where $U_0$ is indexed by $V(G)\setminus\{r\}$, and $U_1$ is indexed by $R = \{r_1,\ldots,r_k\}$. 
	By definition, $U_1 = P(U_0)$ is the permutation matrix obtained from $U_0$ associated to the partition of $G\setminus \{r\}$ into its connected components. 
	Moreover, we have $W = \Diag(W_0,W_1)$, where $W_1$ is indexed by $R$. 
	Let $x$, $y\in V(G')$. 
	
	If $x\in R$ and $y\notin R$, or if $x\notin R$ and $y\in R$, then $w_{xy} = 0$, hence by linearity 
	$$\Delta_A\varphi(w_{xy}) = 0 = (\varphi\otimes\varphi)\Delta_B(w_{xy}),$$ 
	as desired. 
	If $x,y\notin R$, then
	\begin{align*}
		(\varphi\otimes\varphi)(\Delta_B(w_{xy})) &= \sum_{z\in V(G')} \varphi(w_{xz})\otimes\varphi(w_{zy}) = \sum_{z\in V(G')\setminus R} v_{xz}\otimes v_{zy}\\
		&= \sum_{z\in V(G)\setminus\{r\}} u_{xz}\otimes u_{zy} = \Delta_A(u_{xy})\\
		&= \Delta_A(\varphi(w_{xy})),
	\end{align*}
	as desired. 
	Finally, if $x, y\in R$, set $\Pp = \{C_1,\ldots,C_k,\{r\}\}$, where the $C_l$'s are the connected components of $G\setminus \{r\}$. 
	By~\Cref{thm:snplus_qu_partitioned}, there exists a unique $*$-homomorphism 
	$\lambda_\Pp \colon S_{k+1}^+ \to \Qu(G, \Pp)$. Moreover, since $U$ fixes $r$, by~\Cref{lem:mu_partition_connected_components}, it preserves the partition~$\Pp$. 
	Hence, there exists a quantum group morphism $\lambda_Q \colon \Qu(G,\Pp) \to \Qu(G,Q)$ mapping the corresponding fundamental representations. Set 
	$$ \lambda = \lambda_Q \circ \lambda_{\Pp} \colon S_{k+1}^+ \to \Qu(G,Q). $$
	We have $\lambda(s_{ij}) = [P(U)]_{ij}$. 
	In particular, denoting by $\Delta_S$ the comultiplication of $S_{k+1}^+$, we have that 
	$$(\lambda \otimes\lambda)\Delta_S = \Delta_A\lambda.$$ 
	Moreover, since $U$ fixes $r$, we have $P(U) = \Diag(P(U_0),1)$. 
	Writing $P(U_0) = (p_{ij})_{1\leq i,j\leq k}$, we have that 
	for $1\leq i,j\leq k$,
	\begin{align*}
		(\varphi\otimes\varphi)(\Delta_B(w_{r_ir_j})) &= \sum_{l=1}^k \varphi(w_{r_ir_l})\otimes\varphi(w_{r_lr_j}) = \sum_{l=1}^k p_{il}\otimes p_{lj}\\
		&= \sum_{l=1}^k \lambda(s_{il})\otimes\lambda(s_{lj}) = (\lambda\otimes\lambda)(\Delta_S(s_{ij}))\\
		&= \Delta_A\lambda(s_{ij}) = \Delta_A(p_{ij}) = \Delta_A(\varphi(w_{r_ir_j})),
	\end{align*}
	as desired. 
	This concludes the proof. 
\end{proof}

The previous theorem gives a surjective morphism, but injectivity does not hold in general as the simple example in \Cref{fig:morphism_not_injective} shows. Let $ G = C_4 $ be the cycle on 4 vertices and let $Q=V(G)$. Then, $\Psi(G, Q) = (H, Q)$, where $ H $ is the empty graph on the vertex set of $ G $. We have $\Qu(G,Q) = S_4^+ \neq H_2^+ = \Qu(H,Q) $, where the last equality is from~\cite{Banica2009fusion} (see also~\cite[Exercise 7.4]{Freslon2023Book}). 
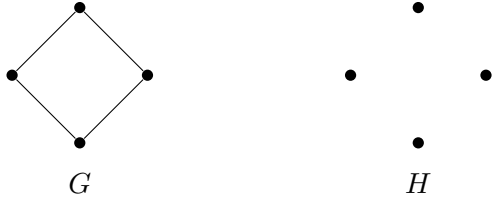
\begin{figure}
	\begin{tikzpicture}[scale=0.9]
		\tikzset{bullet/.style={circle, fill=black, inner sep=1.5pt}}
		\node at (1,-.6) {$G$};
		\node[bullet] (g1) at (0,1) {};
		\node[bullet] (g2) at (1,2) {};
		\node[bullet] (g3) at (2,1) {};
		\node[bullet] (g4) at (1,0) {};
		\draw (g1) -- (g2) -- (g3) -- (g4) -- (g1);
		
		\node at (6,-.6) {$H$};
		\node[bullet] (h1) at (5,1) {};
		\node[bullet] (h2) at (6,2) {};
		\node[bullet] (h3) at (7,1) {};
		\node[bullet] (h4) at (6,0) {};
	\end{tikzpicture}
	\caption{The morphism in \Cref{thm:quotient} is not an injection for $ G =C_4 $.} \label{fig:morphism_not_injective}
\end{figure}

However, this morphism tunes out to be an isomorphism when the anchor is complete.

\begin{theorem}\label{thm:iso}
	The map $\varphi$ in the statement of \Cref{thm:quotient} is an isomorphism if $Q$ is a complete graph.
\end{theorem}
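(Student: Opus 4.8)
The plan is to upgrade the surjection $\varphi$ from \Cref{thm:quotient} to an isomorphism by constructing an explicit inverse through the universal property of $\Qu(G,Q)$. Since $Q$ is complete it is in particular a block, so the second case of the definition of $\Psi$ applies and $\Psi(U) = U$; thus, writing $\Qu(G,Q) = (A,U)$ and $\Qu(\Psi(G,Q)) = (B,W)$ with $\Psi(G,Q) = (G',Q)$, the morphism $\varphi\colon B\to A$ satisfies $\varphi(w_{xy}) = u_{xy}$. I would like a $*$-homomorphism $\psi\colon A\to B$ with $\psi(u_{xy}) = w_{xy}$. By the universal property of $A = \Qu(G,Q) = \Stab_{\Qu(G)}(Q)$, it is enough to check that the fundamental representation $W$ of $B$ is a magic unitary, stabilises $Q$, and commutes with $\Adj(G)$. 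The first two hold by definition of $B = \Qu(G',Q)$, so everything reduces to the commutation $W\Adj(G) = \Adj(G)W$.

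This is the crux, and the only place where completeness of $Q$ is used. By definition of $\Psi$ we have $\Adj(G) = \Adj(G') + M$, where $M \in \Mm_{V(G)}(\C)$ is supported on $Q\times Q$ with $M_{uv} = 1$ exactly when $u\neq v$ are both in $Q$; as $Q$ is complete, $M$ is the adjacency matrix of the complete graph on $Q$ extended by zeros. Since $W$ commutes with $\Adj(G')$ by construction, it remains to show $WM = MW$. Now $W$ stabilises $Q$, so $W = \Diag(W_0,W_1)$ with $W_1$ indexed by $Q\times Q$, and the magic unitary relations force $W_1$ to be a magic unitary of size $\abs{Q}$. Writing correspondingly $M = \Diag(0, J - I)$ with $J$ and $I$ the all-ones and identity matrices of size $\abs{Q}$, it suffices to see that $W_1$ commutes with $J$ and with $I$: commuting with $I$ is trivial, and since every row and column sum of $W_1$ equals $1$ we get $W_1 J = J = JW_1$. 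Hence $WM = MW$, as desired. By contrast, for a non-complete block the matrix $M$ is the adjacency matrix of a proper subgraph of the complete graph on $Q$, which a magic unitary merely stabilising $Q$ need not commute with; this is precisely the failure witnessed by the $C_4$ example following \Cref{thm:quotient}.

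With $W$ satisfying the three relations, the universal property produces $\psi\colon A\to B$ with $\psi(u_{xy}) = w_{xy}$. Evaluating on generators gives $\varphi\circ\psi(u_{xy}) = u_{xy}$ and $\psi\circ\varphi(w_{xy}) = w_{xy}$, so $\varphi\psi = \id_A$ and $\psi\varphi = \id_B$, and $\varphi$ is a bijective $*$-homomorphism. Being already a morphism of quantum groups, its inverse automatically intertwines the comultiplications, so $\varphi$ is an isomorphism of quantum groups. I expect no real obstacle beyond keeping the morphism directions straight and justifying that $W_1$ is itself a magic unitary; the mathematical content is entirely concentrated in the one-line commutation computation enabled by completeness of $Q$.
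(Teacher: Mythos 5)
Your argument for the case where $Q$ is a block is correct and is essentially the paper's: decompose $\Adj(G)$ according to the splitting $W = \Diag(W_0,W_1)$ and observe that $W_1$, being a magic unitary, commutes with the adjacency matrix of the complete graph on $Q$ (your $J-I$ computation). In that case the vertex sets of $G$ and $G'$ coincide, $\Psi(U)=U$, and the inverse is immediate from universality, exactly as you say.

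However, there is a genuine gap: you have silently excluded the case $Q=\{r\}$ with $r$ a cut vertex. A single vertex induces $K_1$, which is a complete graph, so the theorem covers this case — and it is precisely the case invoked later (for instance in the proof of \Cref{thm:qu_rooted_block_graphs}, where $\Qu(\Psi(G,r)) = \Qu(G,r)$ is deduced from \Cref{thm:iso} for a cut vertex $r$). Your opening claim ``since $Q$ is complete it is in particular a block, so the second case of the definition of $\Psi$ applies and $\Psi(U)=U$'' is false in this situation: the \emph{first} case of the definition of $\Psi$ applies, the vertex $r$ is duplicated into roots $r_1,\ldots,r_k$ (so $V(G')\neq V(G)$), and $\Psi(U) = (U_0, P(U_0))$ is not $U$. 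Here the inverse cannot be obtained by a one-line commutation: one must collapse $W=\Diag(W_0,W_1)$ to $T=\Diag(W_0,1)$ indexed by $V(G)$, prove $LW_0=L$ (using $D_LW_0 = W_1D_L$) to get $T\Adj(G)=\Adj(G)T$, and then — the genuinely nontrivial step — verify that $\psi\circ\varphi(w_{r_ir_j}) = w_{r_ir_j}$ on the coefficients of $W_1$, which in the paper requires identifying $w_{r_ir_j}$ with $\sum_{y\in C_i} w_{yx}$ for a neighbour $x$ of $r_j$ via Fulton's lemma (\Cref{lem:fulton}). None of this is present in your proposal, so as written it proves the theorem only for anchors that are blocks.
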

\begin{proof}
	Let us set $\Qu(G,Q) = (A,U)$, $\Qu(\Psi(G,Q)) = (B,W)$, and $\Psi(G,Q) = (G',R)$. It is enough to build an inverse for $\varphi$, since an inverse automatically intertwines the comultiplications.
	
	We start with the case where $Q$ is a block. Then, $V(G) = V(G')$ and $W = \Diag(W_{0}, W_{1})$, with $W_{1}$ indexed by $R = Q$. We claim that $W$ commutes with $\Adj(G)$. To prove this, we set
	\begin{equation*}
		M = \Adj(G) = \begin{pmatrix}
			M_{0} &{}^{t}L\\
			L & M_{1}
		\end{pmatrix}
		\quad \text{ and } \quad 
		M' = \Adj(G') = \begin{pmatrix}
			M_{0} & {}^{t}L\\
			L & 0
		\end{pmatrix}
	\end{equation*}
	with $M_{1}$ indexed by $Q$. Since $M'W = WM'$ by definition of $W$, we have $W_{0}M_{0} = M_{0}W_{0}$, $W_{0}{}^{t}L = {}^{t}LW_{1}$, and $W_{1}L = LW_{0}$. Therefore, $M$ commutes with $W$ if and only if $W_{1}$ commutes with $M_{1}$. But $M_{1} = \Adj(G[Q])$, and $Q$ induces a complete graph. Because every magic unitary commutes with the adjacency matrix of a complete graph, we obtain that $W_{1}M_{1} = M_{1}W_{1}$, and thus $WM = MW$, as claimed. 
	
	Since $R=Q$, we have shown that $W$ is adapted to $(G,Q)$. By universality, this implies that there exists a (unique) unital $*$-homomorphism $\psi \colon B\to A$ such that $\psi(u_{xy}) = w_{xy}$ for any $x$, $y\in V(G)$. It is clear that on the generators $\psi$ is the inverse of $\varphi$, hence the result.
	
	We assume now that $Q = \{r\}$, with $r$ a cut vertex in $G$. In this case, we have
	\begin{equation*}
		\Adj(G) = M = 
		\begin{pmatrix}
			M_{0} & {}^{t}L\\
			L & 0
		\end{pmatrix}
		\quad \text{ and } \quad
		\Adj(G') = M' = 
		\begin{pmatrix}
			M_{0} & {}^{t}D_{L}\\
			D_{L} & 0
		\end{pmatrix},
	\end{equation*}
	where $L = (L_{1} \ldots L_{k})$ is a row matrix with $L_{i}$ indexed by $\{r\}\times C_{i}$ and $D_{L} = \Diag(L_{1}, \ldots, L_{k})$. Writing $W = \Diag(W_{0}, W_{1})$, we define a new matrix $T = \Diag(W_{0}, 1)$ indexed by $V(G)$, where $W_{1}$ is replaced by a single coefficient $1$, corresponding to $r$. We know that $W$ commutes with $M'$, so that $W_{0}M_{0} = M_{0}W_{0}$, and we want to show that $T$ commutes with $M$.
	
	Let us first show that $LW_{0} = L$. To do this, let $x\in V(G)\setminus \{r\}$, and let $1\leq j\leq k$ be the integer such that $x\in C_{j}$. Since $W$ commutes with $M'$, we have $D_{L}W_{0} = W_{1}D_{L}$. For $1\leq i\leq k$ and $y\in V(G)\setminus\{r\}$, note that $[D_{L}]_{r_{i}y} = 0$ if $y\notin C_{i}$ and $[D_{L}]_{r_{i}y} = l_{ry}$ if $y\in C_{i}$. In particular, $l_{ry} = \sum_{i=1}^{k} [D_{L}]_{r_{i}y}$, and we can therefore compute
	\begin{align*}
		[LW_{0}]_{rx} & = \sum_{z\in V(G)\setminus \{r\}} l_{rz}w_{zx} = \sum_{i=1}^{k} \sum_{z\in V(G)\setminus \{r\}} [D_{L}]_{r_{i}z} w_{zx} \\
		& = \sum_{i=1}^{k} [D_{L}W_{0}]_{r_{i}x} = \sum_{i=1}^{k} [W_{1}D_{L}]_{r_{i}x} \\
		& = \left(\sum_{i=1}^{k} w_{r_{i}r_{j}}\right) [D_{L}]_{r_{j}x} = l_{rx}.
	\end{align*}
	We have shown that $LW_{0} = L$, from which ${}^{t}W_{0}{}^{t}L = {}^{t}L$ follows. Moreover, $W_{0}$ being unitary, multiplying by it on the left yields ${}^{t}L = W_{0}{}^{t}L$. In conclusion, $T=\Diag(W_0,1)$ commutes with $M$.
	
	Again, $T$ is adapted to $(G,Q)=(G,r)$. By universality, the previous commutation relation implies the existence of a unique unital $*$-homomorphism $\psi \colon A\to B$ such that $\psi(u_{xy}) = t_{xy}$ for $x$, $y\in V(G)$. 
	
	Let us show that $\varphi$ and $\psi$ are inverse to one another. We start by showing that $\varphi\circ\psi = \id_{A}$. Since $U = \Diag(U_{0}, 1)$, the coefficients of $U_{0}$ generate $A$, and it is enough to show that $\varphi\circ \psi(u_{xy}) = u_{xy}$ for $x$, $y\in V(G)\setminus \{r\}$. But this follows immediately from the fact that $t_{xy} = w_{xy}$ and $u_{xy} = v_{xy}$, since $V(G')\setminus R = V(G)\setminus\{r\}$. Secondly, let us now show that $\psi\circ\varphi = \id_{B}$. Since $W = \Diag(W_{0}, W_{1})$, the coefficients of $W_{0}$ and $W_{1}$ generate $B$, so that it is enough to show that $\psi\circ\varphi$ is the identity on these coefficients. If $x \notin R$ and $y\notin R$, then $\psi\circ\varphi(w_{xy}) = \psi(u_{xy}) = t_{xy} = w_{xy}$ as above. Consider now $1\leq i,j\leq k$, and let us prove that $\psi\circ\varphi(w_{r_{i}r_{j}}) = w_{r_{i}r_{j}}$. Keeping the notations previously used in the proof, and using the equality $D_{L}W_{0} = W_{1}D_{L}$, we have for $x\in C_{j}$,
	\begin{equation*}
		\sum_{y\in C_{i}} [D_{L}]_{r_{i}y} w_{yx} = [D_{L}W_{0}]_{r_{i}x} = [W_{1}D_{L}]_{r_{i}x} = \sum_{l=1}^{k} w_{r_{i}r_{l}}[D_{L}]_{r_{l}x} = w_{r_{i}r_{j}}[D_{L}]_{r_{j}x}.
	\end{equation*}
	As a result of this formula, we have the following:
	\begin{itemize}
		\item If $x$ is a neighbour of $r_{j}$, then $[D_L]_{r_jx} = 1$ and therefore $w_{r_ir_j} = \sum_{y\in C_i} [D_L]_{r_iy}w_{yx}$;
		\item If $x$ is a neighbour of $r_{j}$ and $y\in C_{i}$ is such that $w_{yx}\neq 0$, then $w_{yx} = \sum_{a\in V(G')} w_{yx}w_{ar_{j}}$, so that there exists $a\in V(G')$ satisfying $w_{yx}w_{ar_{j}}\neq 0$.  By~\Cref{lem:fulton}, this implies $d(y,a) = d(x,r_{j}) = 1$, hence $y$ is a neighbor of $a$. Moreover, since $w_{ar_{j}}\neq 0$, $a\in R$, implying $a = r_{i}$. We have consequently shown that $y$ is a neighbor of $r_{i}$.
	\end{itemize} 
	Gathering the previous facts, taking $ x $ a neighbour of $r_j$, we have $w_{r_ir_j} = \sum_{y\in C_i} w_{yx}$ and
	\begin{equation*}
		\psi(\varphi(w_{r_{i}r_{j}})) = \psi(p_{ij}) = \sum_{y\in C_{i}} \psi(u_{yx}) = \sum_{y\in C_{i}} w_{yx} = w_{r_{i}r_{j}},
	\end{equation*}
	as desired. This shows that $\psi$ is an inverse to $\varphi$. This completes the proof.
\end{proof}

Using the results of~\Cref{subsec:QPG}, and in particular~\Cref{thm:qu_sums_rooted_graphs}, we obtain our main decomposition of $\Qu(G,Q)$, for a connected anchored graph $(G,Q)$. 
This decomposition is particularly useful when the graph $G$ is not 2-connected. Moreover, when~\Cref{thm:iso} applies, \Cref{thm:decomposition_qu_group} is enough to precisely compute the quantum automorphism group of~$G$. 

Let $(G',R) = \Psi(G,Q)$, and let $I = I_{G'}$, and $\Jj = \Jj_{G'}$  as in~\Cref{subsec:QPG}, before \Cref{thm:qu_sums_rooted_graphs}.

\begin{corollary}\label{thm:decomposition_qu_group_anchored}
	Let $(G,Q)$ be a connected anchored graph. There exists a surjective quantum group morphism
	\begin{equation*}
		\varphi\colon \bigast_{l=1}^{m} \Qu(\bigoplus_{j\in J_l} (G_i,r_i)) \to \Qu(G,Q),
	\end{equation*}
	which is an isomorphism when $Q$ is complete. Moreover, if $(G_{i}, r_{i})\simeq (G_{j}, r_{j})$ whenever $(G_{i}, r_{i})\qi(G_{j}, r_{j})$, then
	\begin{equation*}
		\varphi\colon \bigast_{l=1}^{m} \Qu(G_{i}, r_{i}) \wr_{\ast} S_{|J_i|}^{+} \to \Qu(G,Q).
	\end{equation*}
\end{corollary}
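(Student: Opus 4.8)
The plan is to assemble the corollary directly from the three main results of~\Cref{sec:psi} together with~\Cref{thm:qu_sums_rooted_graphs}. The key observation is that $\Psi(G,Q) = (G',R)$ is, by construction, a genuine \emph{rooted} graph: in both cases of the definition of $\Psi$ the set $R$ contains exactly one vertex per connected component of $G'$ (in the cut-vertex case this is the tautological $R=\{r_1,\ldots,r_k\}$, and in the block case the removal of the edges of $Q$ splits $G$ into components each containing precisely one vertex of $Q=R$). Hence~\Cref{thm:qu_sums_rooted_graphs} applies verbatim to $(G',R)$, with the index data $I=I_{G'}$ and $\Jj=\{J_1,\ldots,J_m\}=\Jj_{G'}$ fixed before the statement, giving
\begin{equation*}
	\Qu(\Psi(G,Q)) = \Qu(G',R) = \bigast_{l=1}^m \Qu\left(\bigoplus_{i\in J_l}(G_i,r_i)\right).
\end{equation*}

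First I would invoke~\Cref{thm:quotient} to produce the surjective quantum group morphism $\varphi\colon \Qu(\Psi(G,Q))\to \Qu(G,Q)$. Transporting this surjection through the free-product decomposition just recorded yields exactly the asserted map
\begin{equation*}
	\varphi\colon \bigast_{l=1}^m \Qu\left(\bigoplus_{i\in J_l}(G_i,r_i)\right)\longrightarrow \Qu(G,Q),
\end{equation*}
with surjectivity inherited from~\Cref{thm:quotient}. For the isomorphism claim, I would simply observe that when $Q$ is complete,~\Cref{thm:iso} upgrades the morphism of~\Cref{thm:quotient} to an isomorphism, so the composite is an isomorphism as well.

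Finally, the ``moreover'' statement is the second half of~\Cref{thm:qu_sums_rooted_graphs}: under the hypothesis that $(G_i,r_i)\simeq(G_j,r_j)$ whenever $(G_i,r_i)\qi(G_j,r_j)$, each class $J_l$ consists of mutually isomorphic rooted components, so that $\bigoplus_{i\in J_l}(G_i,r_i) = |J_l|.(G_{i_l},r_{i_l})$ and $\Qu(\bigoplus_{i\in J_l}(G_i,r_i)) = \Qu(G_{i_l},r_{i_l})\wr S_{|J_l|}^+$ by~\Cref{lem:qu_rooted_sums}; substituting into the free product gives the stated formula. The only genuine point requiring care — and the place I would expect a careful reader to probe — is the verification that $\Psi(G,Q)$ is a rooted graph in the block case, so that~\Cref{thm:qu_sums_rooted_graphs} is applicable; everything else is a formal composition of already-established morphisms.
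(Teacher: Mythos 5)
Your proposal is correct and follows exactly the paper's route: the printed proof is the one-line combination of \Cref{thm:quotient}, \Cref{thm:iso}, and \Cref{thm:qu_sums_rooted_graphs}, which you have simply spelled out, including the (genuinely necessary, and in the paper only asserted) check that $\Psi(G,Q)$ is a rooted graph in the block case because maximality of the block $Q$ prevents two of its vertices from being joined by a path outside $Q$. No gaps.
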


\begin{proof}
	This follows by combining~\Cref{thm:quotient},~\Cref{thm:iso}, and~\Cref{thm:qu_sums_rooted_graphs}.
\end{proof}

\begin{corollary}\label{thm:decomposition_qu_group}
	Let $G$ be a connected graph. Let $ Q\subseteq V(G)$ be defined as before \Cref{prop:every_graph_anchored}. Then, there exists a surjective quantum group morphism
	\begin{equation*}
		\varphi\colon \bigast_{l=1}^{m} \Qu(\bigoplus_{j\in J_l} (G_i,r_i)) \to \Qu(G),
	\end{equation*}
	which is an isomorphism when $Q$ is complete. Moreover, if $(G_{i}, r_{i})\simeq (G_{j}, r_{j})$ whenever $(G_{i}, r_{i})\qi(G_{j}, r_{j})$, then
	\begin{equation*}
		\varphi\colon \bigast_{l=1}^{m} \Qu(G_{i}, r_{i}) \wr_{\ast} S_{|J_i|}^{+} \to \Qu(G).
	\end{equation*}
\end{corollary}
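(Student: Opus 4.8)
The plan is to deduce this statement directly from its anchored counterpart, \Cref{thm:decomposition_qu_group_anchored}, by identifying $\Qu(G)$ with $\Qu(G,Q)$. The key point is that $Q$ is defined exactly as in \Cref{prop:every_graph_anchored}: by \Cref{thm:centre_block}, the centre $Z(G)$ of the connected graph $G$ is either a cut vertex $\{r\}$, in which case $Q=\{r\}$, or is contained in a unique block $B_{Z(G)}$, in which case $Q=B_{Z(G)}$. In either case $(G,Q)$ is a connected anchored graph, so that the full machinery of the anchored setting applies to it.

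First I would apply \Cref{prop:every_graph_anchored} to obtain the canonical isomorphism of quantum groups $\Qu(G)\simeq\Qu(G,Q)$. This rests on the fact that any magic unitary commuting with $\Adj(G)$ is automatically adapted to $(G,Q)$, so that the extra relation stabilising $Q$ in the definition of $\Qu(G,Q)=\Stab_{\Qu(G)}(Q)$ is redundant; in particular this identification intertwines the comultiplications and is genuinely an isomorphism of quantum groups, not merely of the underlying $*$-algebras.

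Next I would apply \Cref{thm:decomposition_qu_group_anchored} to the anchored graph $(G,Q)$, with $(G',R)=\Psi(G,Q)$ and $I=I_{G'}$, $\Jj=\Jj_{G'}$ as fixed just before the statement. This produces the surjective quantum group morphism onto $\Qu(G,Q)$, which is an isomorphism when $Q$ is complete, together with the wreath-product refinement under the additional hypothesis that $\qi$-equivalent rooted summands are already isomorphic. Composing this morphism with the isomorphism $\Qu(G,Q)\simeq\Qu(G)$ from the previous step yields the desired $\varphi$; surjectivity and, in the complete case, bijectivity are clearly preserved under composition with an isomorphism of quantum groups.

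Since all the substantive work has already been carried out in the anchored setting, there is no real obstacle here: the argument is pure bookkeeping, and the one-line justification is simply the combination of \Cref{thm:decomposition_qu_group_anchored} with \Cref{prop:every_graph_anchored}. The only point deserving a moment's attention is that the passage from $\Qu(G,Q)$ to $\Qu(G)$ respects the quantum-group structure, which is exactly the content of \Cref{prop:every_graph_anchored}. It is worth noting that when the centre is a cut vertex, $Q=\{r\}$ is trivially complete, so that the isomorphism half of the statement automatically applies in that case; the hypothesis ``$Q$ complete'' is genuinely restrictive only when the centre lies in a non-complete central block, as in the example of $C_4$ recorded in \Cref{fig:morphism_not_injective}.
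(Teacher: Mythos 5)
Your argument is correct and is exactly the paper's own proof: the corollary is deduced by composing the canonical isomorphism $\Qu(G)\simeq\Qu(G,Q)$ from \Cref{prop:every_graph_anchored} with the morphism supplied by \Cref{thm:decomposition_qu_group_anchored}. Your additional remark that the hypothesis ``$Q$ complete'' is automatic when the centre is a cut vertex is accurate and consistent with the paper's discussion.
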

\begin{proof}
	It follows directly from \Cref{thm:decomposition_qu_group_anchored} and \Cref{prop:every_graph_anchored}. 
\end{proof}

Indeed, the precedent corollary remains true for any anchor $Q$ stabilised by the fundamental representation of $\Qu(G)$.

\section{Complete blocks}\label{sec:complete_blocks}

Before studying the noncommutative properties of block graphs, we show a key graph-theoretical result,~\Cref{thm:centre_block}, characterising the centre of a graph satisfying the hypotheses that the centre is contained in a complete block (i.e.\ a block that induces a complete subgraph).  More precisely, we show that the centre of the graph is contained in a complete block if and only if it is the whole complete block. This allows us to apply \Cref{thm:iso}. 

Let us start by a lemma comparing the eccentricity between cut vertices and the other vertices of a complete block.

\begin{lemma}\label{lem:compare_e_internal_cut}
	Let $G$ be a graph, let $B$ be a complete block of $G$, and let $u, v \in B$. 
	If $v$ is a cut vertex and $u$ is not a cut vertex, then $e(u) - 1 \leq e(v) \leq e(u)$.
\end{lemma}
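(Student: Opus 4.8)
The plan is to prove the two inequalities $e(v)\le e(u)$ and $e(v)\ge e(u)-1$ separately; the lower bound is soft, while the upper bound is where the hypotheses on $u$ and $v$ genuinely enter. Before anything, I would dispose of the degenerate case: if $G$ is disconnected then $u$ and $v$, lying in the same component $B$, both have infinite eccentricity and the inequalities hold trivially, so one may assume $G$ is connected and all distances are finite. Note also that $u\neq v$, since one is a cut vertex and the other is not.

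For the lower bound $e(v)\ge e(u)-1$, I would use only that $u$ and $v$ lie in the complete block $B$, so that $d(u,v)=1$. Eccentricity is $1$-Lipschitz for the graph metric: if $w$ is an eccentric vertex for $u$, the triangle inequality gives $d(v,w)\ge d(u,w)-d(u,v)=e(u)-1$, whence $e(v)\ge d(v,w)\ge e(u)-1$.

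The upper bound $e(v)\le e(u)$ is the main point, and here I would exploit that $u$ is \emph{not} a cut vertex. Since $u\in B$ and $u$ is not a cut vertex, it lies in no block other than $B$, so every neighbour of $u$ belongs to $B$ (an edge from $u$ to a vertex outside $B$ would place $u$ in a second block). It then suffices to show $d(v,w)\le e(u)$ for every $w\in V(G)$. For $w\in B$ this is immediate, as $d(v,w)\le 1\le e(u)$, using $e(u)\ge d(u,v)=1$. For $w\notin B$, the key step is the distance formula
\[ d(u,w)=1+\min_{x\in B\setminus\{u\}} d(x,w), \]
which holds because $u$ is adjacent to every $x\in B\setminus\{u\}$ (giving ``$\le$''), while any shortest $u$–$w$ path must begin with an edge from $u$ to some $x\in B\setminus\{u\}$, all neighbours of $u$ being in $B$, with the remaining portion a shortest $x$–$w$ path (giving ``$\ge$''). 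Choosing $x^\ast\in B\setminus\{u\}$ realising this minimum and using $d(v,x^\ast)\le 1$ (both lie in the complete block $B$), the triangle inequality yields
\[ d(v,w)\le d(v,x^\ast)+d(x^\ast,w)\le 1+d(x^\ast,w)=d(u,w)\le e(u), \]
as desired.

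The step I expect to require the most care is the distance formula, that is, justifying that internality of $u$ forces every shortest path out of $B$ to cross an edge incident to $u$; this is precisely where the hypothesis that $u$ is not a cut vertex is used. By contrast, the hypothesis that $v$ is a cut vertex is needed only to guarantee $u\neq v$: the upper bound in fact holds for every $v\in B$ once $u$ is internal.
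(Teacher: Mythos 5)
Your proof is correct, and it takes a genuinely different route from the paper's in both halves. The paper proves each inequality by picking an eccentric vertex ($x$ for $u$, then $y$ for $v$), letting $w$ be the gate of that vertex into $B$ (the unique vertex of $B$ through which every path from outside into $B$ passes), and computing the relevant distances exactly via a case split on whether $w=v$. You instead get the lower bound $e(v)\ge e(u)-1$ from the $1$-Lipschitz property of eccentricity together with $d(u,v)=1$, which uses nothing about blocks beyond completeness of $B$; and you get the upper bound by replacing the gate property with the more local fact that every neighbour of the internal vertex $u$ lies in $B$, which yields the formula $d(u,w)=1+\min_{x\in B\setminus\{u\}}d(x,w)$ and then the triangle inequality through the minimiser. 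Both arguments are sound (the claim that an edge from $u$ to a vertex outside $B$ would place $u$ in a second block, hence make it a cut vertex, is the standard block-decomposition fact). What each approach buys: the paper's gate computation produces the sharper exact case-by-case distance formulas, in the same style as the companion \Cref{lem:compare_e_cut_cut}, whereas yours is more elementary, dispatches the disconnected case explicitly, and correctly isolates the role of the hypotheses --- in particular your closing observation that the assumption that $v$ is a cut vertex serves only to guarantee $u\neq v$ is accurate (if $v$ is also internal, $u$ and $v$ are twins and the inequalities are immediate).
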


\begin{proof}
	First, let $x$ be an eccentric vertex for $u$ and let $w$ be the first vertex in $B$ of an $x-B$ path. 
	We claim that 
	\begin{equation} \label{eq:case1}
		d(v,x) = \begin{cases}
			d(u,x) - 1 & \text{ if } w = v \\
			d(u,x) & \text{ if } w \neq v 
		\end{cases}.
	\end{equation}
	Let us prove this claim. 
	If $x \in B$, we have $w=x$ and the claim is obvious. 
	Assume therefore that $x \notin B$. 
	Then, every path between a vertex in $B$ and $x$ passes through $w$. 
	In particular, a shortest path from $u$ to $x$ passes through $w$, which implies that $d(u,x) = d(u,w) + d(w,x) = d(w,x) + 1$. 
	Now, if $w = v$, we have $d(u,x) = d(v,x) + 1$ and if $w \neq v$, then every shortest path between $v$ and $x$ starts with $vw$ and continues with a shortest path from $w$ to $x$, hence $d(v,x) = 1 + d(w, x) = d(u,x)$. 
	This completes the proof of~\Cref{eq:case1}. 
	
	Now, let $y$ be an eccentric vertex for $v$, and let $w$ be the first vertex in $B$ of a $y - B$ path. 
	We claim that 
	\begin{equation} \label{eq:case2}
		d(u,y) = \begin{cases}
			d(v,y) + 1 & \text{ if } w = v \\
			d(v,y) & \text{ if } w \neq v
		\end{cases}.
	\end{equation}
	To prove the claim, note that if $y\in B$, we have $w=y$ and the claim follows, and when $y \notin B$, a shortest path between $y$ and any vertex $b \in B$ passes through $w$, thus $d(b, y) = d(b,w) + d(w,y)$. 
	Now, $u$ is not a cut vertex and $w \in B$, so $d(u,w) =1$. 
	Since $u\in B$, we have $d(u,y)  = d(u,w) + d(w,y) = 1 + d(w,y)$. 
	Moreover, we have $d(v,y) = d(v,w) + d(w,y)$, and $d(v,w)$ is 1 if $v \neq w$ and is 0 if $v = w$. 
	This completes the proof of~\Cref{eq:case2}. 
	
	Finally, from~\Cref{eq:case1}, we have 
	$$e(u)-1  = d(u,x)-1 \leq d(v,x) \leq e(v),$$
	and from~\Cref{eq:case2}, we have 
	$$e(v) = d(v,y) \leq d(u,y) \leq e(u),$$ 
	which completes the proof.  
\end{proof}

Next, we compare the eccentricity of cut vertices belonging to the same block. 

\begin{lemma} \label{lem:compare_e_cut_cut}
	Let $G$ be a graph and let $v$ and $v'$ be two distinct cut vertices in a complete block $B$ of $G$. 
	Let $x$ be an eccentric vertex for $v$ and let $w$ be the first vertex in $B$ of an $x - B$ path. 
	Then,
	$$
	e(v') \geq d(v',x) = \begin{cases}
		d(v,x) - 1 = e(v) - 1 & \text{ if } w = v' \\
		d(v,x) = e(v) & \text{ if }  w \neq v,v' \\
		d(v,x) + 1 = e(v) + 1 & \text{ if } w = v
	\end{cases}. 
	$$ 
\end{lemma}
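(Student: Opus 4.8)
The plan is to reduce the entire statement to a single distance-decomposition identity and then read off the three cases from the completeness of $B$.

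First I would establish that for every vertex $b\in B$,
\[ d(b,x) = d(b,w) + d(w,x). \]
When $x\in B$ this is immediate, since then $w=x$ and the right-hand side is $d(b,x)+0$. When $x\notin B$, this is exactly the block-structure fact already exploited in the proof of \Cref{lem:compare_e_internal_cut}: $w$ is the vertex of $B$ through which every $x$--$B$ path must pass, so any shortest path from $b$ to $x$ first reaches $w$ and then continues from $w$ to $x$, giving $d(b,x)=d(b,w)+d(w,x)$. In particular this identity holds for the two choices $b=v$ and $b=v'$.

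Applying it to $b=v$ and $b=v'$ and subtracting yields
\[ d(v',x) - d(v,x) = d(v',w) - d(v,w). \]
Since $x$ is an eccentric vertex for $v$, the term $d(v,x)$ equals $e(v)$. Now I would use that $B$ induces a complete graph, so that for $a,b\in B$ one has $d(a,b)=0$ when $a=b$ and $d(a,b)=1$ otherwise. Evaluating $d(v',w)$ and $d(v,w)$ in each of the three cases, and recalling $v\neq v'$, gives the differences $-1$ (when $w=v'$, so $d(v',w)=0$ and $d(v,w)=1$), $0$ (when $w\notin\{v,v'\}$, so both distances equal $1$), and $+1$ (when $w=v$, so $d(v,w)=0$ and $d(v',w)=1$). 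These are precisely the three asserted values of $d(v',x)$ expressed in terms of $e(v)=d(v,x)$.

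Finally, the inequality $e(v')\geq d(v',x)$ is immediate from the definition $e(v')=\max_{u\in V(G)} d(v',u)$. The only step requiring genuine care is the decomposition identity; but since it is the very block-structure property already used in the preceding lemma, no real obstacle arises, and the remainder is a routine case analysis driven entirely by the completeness of $B$.
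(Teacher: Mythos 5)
Your proof is correct and follows essentially the same route as the paper's: both rest on the decomposition $d(b,x)=d(b,w)+d(w,x)$ for $b\in B$ and then evaluate $d(v,w)$ and $d(v',w)$ case by case using the completeness of $B$. Your subtraction step is a minor stylistic repackaging of the same computation, so there is nothing further to add.
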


\begin{proof}
	Note that for every vertex $b \in B$, a shortest path from $b$ to $x$ passes through $w$. 
	Hence, $d(b,x) = d(b,w) + d(w,x)$. 
	If $w = v' $, then $d(v,w) = 1$ and $d(v',w) = 0$, therefore $$d(v,x) = d(v,w) + d(w,x) = 1 + d(v',x),$$ 
	implying $d(v',x) = d(v,x) - 1$. 
	If $w \neq v,v'$, then  $d(v,w) = d(v',w) = 1$, hence $$d(v,x) = 1 + d(w,x)  = d(v',x).$$ 
	Eventually, if $w = v$, then $d(v,w) = 0$ and $d(v',w) = 1$, yielding $$d(v',w) = 1 + d(w,x) = 1 + d(v,x),$$ 
	and concluding the proof.
\end{proof}

Now, we can state the main result of this subsection.

\begin{theorem}\label{thm:complete_block}
	Let $G$ be a graph such that its centre is contained in a complete block $B$ and is not a cut vertex. 
	Then $Z(G) = B$. 
	In particular, the centre of a block graph is either a block or a cut vertex.
\end{theorem}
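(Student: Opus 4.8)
The plan is to prove the stronger statement that every vertex of $B$ has eccentricity equal to the radius $r=\min_{v\in V(G)}e(v)$. Since $Z(G)\subseteq B$ is nonempty, the value $r$ is attained inside $B$, so once I know all of $B$ has eccentricity $r$ I get $B\subseteq Z(G)$ and hence $Z(G)=B$. Two facts drive everything. First, an internal (non-cut) vertex $u$ of $B$ has all its neighbours in $B$ and reaches any $z\notin B$ only through a cut vertex of $B$, so $d(u,z)=1+\min_{w}d(w,z)$ with the minimum over the cut vertices $w$ of $B$; this value does not depend on $u$, so all internal vertices of $B$ share a common eccentricity $e_0$. Second, \Cref{lem:compare_e_internal_cut} forces every cut vertex $v$ of $B$ to satisfy $e_0-1\le e(v)\le e_0$.

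First I would dispose of the case where $Z(G)$ contains an internal vertex $u$. Then $e_0=e(u)=r$, so all internal vertices have eccentricity $r$, and the inequality above confines every cut vertex of $B$ to $\{r-1,r\}$; as $r$ is the global minimum, each such cut vertex in fact has eccentricity $r$. Thus all of $B$ has eccentricity $r$ and $Z(G)=B$.

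The remaining and essential case is when $Z(G)$ consists only of cut vertices. Here the hypothesis that $Z(G)$ is not a single cut vertex is exactly what I need: it yields two distinct central cut vertices $b_1,b_2$, both of eccentricity $r$. The engine is \Cref{lem:compare_e_cut_cut} in its case $w=v$: were some eccentric vertex $x$ of $b_1$ reached through $b_1$ itself, it would lie at distance $e(b_1)+1=r+1$ from $b_2$, contradicting $e(b_2)=r$; hence no eccentric vertex of $b_1$ enters $B$ at $b_1$. I would use this twice. It first rules out internal vertices of $B$: such a vertex $u$ would satisfy $e(u)=r+1$ and $u\notin Z(G)$, and its eccentric vertex, necessarily reached through $b_1$, would be eccentric for $b_1$ at $b_1$, a contradiction. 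With $B$ now entirely made of cut vertices, I apply the same lemma to an arbitrary $v\in B$ and its eccentric vertex $y$: the subcase where $y$ is reached through $v$ gives $e(v)\le r-1$, impossible; the subcase where $y$ is reached through $b_1$ would force $e(v)=r+1$ only if $y$ were eccentric for $b_1$ at $b_1$, which is excluded; the remaining subcase gives $e(v)\le r$. Hence $e(v)=r$ for every $v\in B$, so $Z(G)=B$.

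The hard part is this cut-vertex-only case, and concretely the bookkeeping in \Cref{lem:compare_e_cut_cut}: for each eccentric vertex one must record whether its entry point into $B$ is $v$, $b_1$, or a third vertex, and the auxiliary central vertex $b_2$ is precisely what eliminates the single dangerous branch. Finally, the ``in particular'' clause is immediate: in a block graph every block is complete, so by \Cref{thm:centre_block} the centre is either a cut vertex or contained in a unique block $B$, which is then complete; in the latter situation, if the centre is not a cut vertex the statement just proved gives $Z(G)=B$, a block.
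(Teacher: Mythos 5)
Your proof is correct and follows essentially the same route as the paper: the same case split on whether the centre contains a non-cut vertex of $B$, with \Cref{lem:compare_e_internal_cut} and the twin argument settling the first case, and two applications of \Cref{lem:compare_e_cut_cut} through a pair of distinct central cut vertices settling the second. If anything, your elimination of the internal vertices of $B$ in the cut-vertex-only case (via your ``engine'' and the refined distance formula) is spelled out more fully than the paper's one-line justification of that step.
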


\begin{proof}
	Assume that $Z(G)$ is not a cut vertex. 
	By~\Cref{thm:centre_block}, there is a unique block $B$ containing it. 
	Hence we assume $B$ is complete and we want to show that $Z(G) = B$. 
	This is enough, since then the last assertion will then follow from the definition of block graphs. 
	
	First of all, if $G$ is 2-connected, then $G = B_Z$, and the eccentricity of all of its vertices is 1 by assumption. 
	Hence $Z(G) = G$, as desired. 
	
	From now on, let us assume that $G$ is not 2-connected. 
	We denote by $r$ the radius of $G$, recall it is equal to $r = e(x)$ for some $x\in Z(G)$.
	
	First, assume that the centre contains a vertex $u \in B$ that is not a cut vertex. 
	By~\Cref{lem:compare_e_internal_cut}, for every cut vertex $v \in B$, we have $e(v) \leq e(u)$, which implies that $v \in Z(G)$. 
	Moreover, for every vertex $u'$ in $B$ that is not a cut vertex, $u$ and $u'$ are twins (i.e. $N(u)\setminus\{u'\} = N(u')\setminus\{u\}$), which implies that they have the same eccentricity, so $u' \in Z(G)$. 
	Therefore, $B = Z(G)$.
	
	Now assume that every vertex in the centre is a cut vertex. 
	By assumption, this implies that $Z(G)$ is not of size 1, so there are at least two distinct cut vertices $v, v' \in B$ in $Z(G)$. 
	
	First, we show that every vertex in $B$ is a cut vertex. 
	For the sake of contradiction, let $u \in B$ such that $u$ is not a cut vertex. 
	By~\Cref{lem:compare_e_internal_cut}, we have that $e(u) \geq e(v)$, a contradiction with the fact that $u \notin Z(G)$. 
	This shows that every vertex of $B$ is a cut vertex.
	
	Now let us show that $Z(G)$ contains all cut vertices (thus all vertices) of $B$. 
	For the sake of contradiction, assume there is a cut vertex $v'' \in B\setminus Z(G)$. 
	That is, we have $e(v'') > r = e(v) = e(v')$. 
	Let $x$ be an eccentric vertex for $v''$ and let $w$ be the first vertex in $B$ of an $x-B$ path. 
	By~\Cref{lem:compare_e_cut_cut}, since $e(v'') > e(v)$, we have that $w = v$. 
	Now, applying the same lemma to $v'$ and $v''$, knowing that $w =v\neq v', v''$,  yields
	$$e(v') \geq e(v'') > e(v) = e(v'),$$ 
	a contradiction. 
	Thus, all cut vertices of $B$ are in $Z(G)$, and $Z(G) = B$.
\end{proof}

Note that by the theorem above, if $G$ is a connected block graph, then $(G, Z(G))$ is an anchored graph. 

\section{Quantum properties of block graphs}\label{sec:block_graphs}

In this section, we apply the techniques developed to compute the quantum automorphism groups of block graphs and to describe their quantum properties for the properties introduced in \Cref{subsec:superrigid_tractable}. 

We first need some terminology. 
Let $\mathcal E$ be a family of groups. 
We denote by $\Jor(\mathcal E)$ the closure of $\mathcal E$ under the operations of cartesian product and wreath product with permutation groups. 
Now let $\Ff$ be a family of compact quantum groups. 
We let $\Jor^+(\Ff)$ denote the closure of $\Ff$ under the operations of free products and free wreath product with quantum permutation groups. 

We show in this section that the family of quantum automorphism groups of block graphs and block-cographs are equal to $\Jor^+(\C)$ and that both families are superrigid tractable. 

These results can be seen as generalisations of previously known results. In~\cite{vanDobbenetal2023} and~\cite{Meunier2023}, it is shown that the family of quantum automorphism groups of trees is equal to $\Jor^+(\C)$, and in~\cite{Meunier2023}, it is shown that the family of tree-cographs is superrigid tractable and that the family of their quantum automorphism groups is $\Jor^+(\C)$. Since block graphs and block-cographs are strict supersets of trees and tree-cographs respectively, we generalise these results.  

\subsection{Rooted block graphs}

In order to obtain our results, we first need as a technical step to study the case of rooted block graphs. 

The following lemma allows us to reduce some of our proofs to the case of connected graphs. 

\begin{lemma}\label{lem:sums_rooted}
	If two rooted graphs are quantum isomorphic, then they have the same number of connected components, and they are two-by-two quantum isomorphic as rooted graphs.
\end{lemma}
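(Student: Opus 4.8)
The plan is to start from a quantum isomorphism $U$ of rooted graphs between $(G,R)$ and $(H,S)$, and to compare the partitions into connected components on both sides. Let $\Pp_G$ and $\Pp_H$ denote these two partitions. The first step is to observe that $U$, being a quantum isomorphism commuting with the adjacency matrices, automatically respects the connected-component partitions: by \Cref{lem:mu_partition_connected_components} applied on each side (more precisely, by the underlying \cite[Lemma 3.4]{freslon2025blockstructures}), $U$ is a quantum isomorphism of partitioned graphs from $(G,\Pp_G)$ to $(H,\Pp_H)$. This puts us exactly in the setting of \Cref{thm:partitioned_graphs}.

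From there I would invoke \Cref{thm:partitioned_graphs} directly. Its final assertion produces a bijection $\varphi\colon \Pp_G \to \Pp_H$ such that, for each connected component $D$ of $G$, the block $W_{D,\varphi(D)} = U[\varphi(D),D]$ is a quantum isomorphism from $G[D]$ to $H[\varphi(D)]$. In particular the number of connected components of $G$ and $H$ coincide, which is the first claim. For the component-by-component statement, the bijection $\varphi$ gives the pairing: the $i$th component $G_i$ of $G$ is quantum isomorphic, via $W_{V(G_i),\varphi(V(G_i))}$, to the component $H_{\varphi(i)}$ of $H$.

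The remaining point, and the only real work, is to upgrade each component-wise quantum isomorphism from a quantum isomorphism of plain graphs to one of \emph{rooted} graphs, i.e.\ to check that the single root $r_i\in R\cap V(G_i)$ is sent to the single root $s_{\varphi(i)}\in S\cap V(H_{\varphi(i)})$ by the block $W_{V(G_i),\varphi(V(G_i))}$. This is where the rooted hypothesis enters: since $R$ contains exactly one vertex per component of $G$ and $S$ exactly one per component of $H$, and since $U$ is a quantum isomorphism of rooted (hence marked) graphs, the mark-preservation condition $u_{ax}=0$ whenever exactly one of $a\in S$, $x\in R$ holds forces the roots to match up under $\varphi$. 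Concretely, I would argue that $p_{\varphi(i)i} = u_{s_{\varphi(i)}r_i}$ equals the unit of the relevant corner algebra (using part (2) of \Cref{thm:partitioned_graphs} together with the fact that $U$ stabilises the marks, so the row/column sums defining $p_{\varphi(i)i}$ collapse onto the single root entry), which is exactly the statement that $W_{V(G_i),\varphi(V(G_i))}$ is a quantum isomorphism of rooted graphs $(G_i,r_i) \qi (H_{\varphi(i)}, s_{\varphi(i)})$.

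The main obstacle I anticipate is purely bookkeeping: correctly identifying $p_{\varphi(i)i}$ with the single entry $u_{s_{\varphi(i)}r_i}$ and verifying it is nonzero (equivalently, that $\varphi$ must send the root-containing component to the matching root-containing one), rather than any substantive analytic difficulty. Once the mark-preservation of $U$ is fed into the partition framework, everything follows from \Cref{thm:partitioned_graphs}; the bulk of the proof is organizing this translation rather than proving anything genuinely new.
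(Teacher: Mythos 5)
Your proposal is correct and follows essentially the same route as the paper: apply \Cref{thm:partitioned_graphs} to the connected-component partitions (which $U$ preserves by \cite[Lemma 3.4]{freslon2025blockstructures}) and then observe that the blocks $U[\varphi(D),D]$ preserve the roots because $U$ does. Your identification of $p_{\varphi(i)i}$ with the single entry $u_{s_{\varphi(i)}r_i}$ is a slightly more explicit version of the paper's final remark, but the argument is the same.
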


\begin{proof}
	This follows from~\Cref{thm:partitioned_graphs} by noticing that a quantum isomorphism of rooted graphs from $(G,R)$ to $(H,S)$ preserves the connected components of $G$ and $H$ by~\cite[Lemma 3.4]{freslon2025blockstructures} and that the blocks $U[\varphi(D),D]$ are quantum isomorphisms of rooted graphs.
\end{proof}

We can now show the superrigidity of rooted block graphs.

\begin{lemma}\label{lem:qi_superrigid_rooted}
	Let $(G,R)$ be a rooted block graph quantum isomorphic to a rooted graph $(H,S)$. 
	Then $(G,R)\simeq (H,S)$. 
	In other words, rooted connected block graphs are superrigid among rooted graphs and thus satisfy (QI).
\end{lemma}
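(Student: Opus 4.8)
The plan is to proceed by strong induction on the number of vertices of $G$, using the $\Psi$-operation to peel off the anchor and reduce to strictly smaller connected components. First, by~\Cref{lem:sums_rooted}, a quantum isomorphism of rooted graphs $(G,R)\to(H,S)$ decomposes into quantum isomorphisms between matched connected components, each of which is a quantum isomorphism of rooted graphs. Since an induced subgraph of a block graph is again a block graph, and each connected component of $(G,R)$ is a rooted connected block graph, it suffices to treat the connected case: I would reduce to showing that a connected rooted block graph $(G,r)$ quantum isomorphic to a rooted graph $(H,s)$ is in fact isomorphic to it as a rooted graph.

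For the connected case, the key observation is that the root $r$ of $(G,r)$ is a single vertex, and the blocks of $G$ containing $r$ are all complete. I would look at the decomposition induced by removing $r$: if $r$ is a cut vertex, then $G\setminus\{r\}$ splits into connected components $C_1,\ldots,C_k$, and each $G_i = G[V(C_i)\cup\{r\}]$ is a smaller connected rooted block graph. This is essentially applying $\Psi$ to the anchored graph where we take $r$ as anchor. By~\Cref{lem:psi_u}, the quantum isomorphism $U$ passes to a quantum isomorphism $\Psi(U)$ between the rooted graphs $\Psi(G,r)$ and $\Psi(H,s)$, whose components are strictly smaller. The block containing $r$ being complete lets~\Cref{thm:iso} apply, so that the quantum automorphism data is faithfully captured by the decomposition. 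Then by~\Cref{lem:sums_rooted} applied to $\Psi(U)$, the smaller rooted components are pairwise quantum isomorphic, and by the induction hypothesis they are pairwise \emph{classically} isomorphic as rooted graphs. Since $H$ is assembled from these components in the same combinatorial pattern (the multiplicities of isomorphism types being preserved because $U$ matches components bijectively), I would then reconstruct an isomorphism $(G,r)\simeq(H,s)$ by gluing the component isomorphisms along the common root.

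One subtlety I must handle carefully is that the quantum isomorphism partner $(H,S)$ is only assumed to be a rooted graph, not a priori a block graph; I do not get to assume $H$ is a block graph at the outset. So before invoking $\Psi$ on the $H$ side I need to know that $H$ inherits enough structure. Here I would lean on Fulton's lemma,~\Cref{lem:fulton}: a quantum isomorphism preserves distances between pairs of vertices that are linked by a nonzero matrix coefficient, so it preserves metric invariants such as which vertices are cut vertices and the block decomposition. In particular, the property of having complete blocks (equivalently, $0$-hyperbolicity) is a metric property, and a quantum isomorphism should force $H$ to also be $0$-hyperbolic, hence a block graph; this is exactly the content announced in Theorem~A, which I may cite or reprove at the level needed. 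Once $H$ is known to be a connected block graph with root $s$, the symmetric application of $\Psi$ on both sides makes the inductive gluing legitimate.

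The main obstacle I anticipate is the base/gluing step rather than the inductive machinery: I must verify that when $r$ lies inside a complete block $B$ rather than being a cut vertex, the correct reduction is to remove the edges of $B$ (the second case of $\Psi$) and show that the resulting rooted components still match up under the induced quantum isomorphism, and that the \emph{complete} graph on $B$ itself — whose quantum automorphism group is all of $S_{|B|}^+$ and which carries genuine quantum symmetry — does not obstruct the classical reconstruction of $(H,s)$. The point is that although $\Qu$ of a complete block is quantum, the underlying \emph{set} of vertices in $B$ and their attachment pattern to the smaller components is rigid, because $P(U)$ from~\Cref{thm:partitioned_graphs} is a genuine magic unitary matching cells, and the per-cell isomorphism types are preserved. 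Making this bookkeeping precise — that a quantum isomorphism induces a classical bijection on isomorphism types of rooted components with matching multiplicities — is where the real work lies, and I would formalize it through the partition matrix $P(U)$ and the component-matching bijection $\varphi$ furnished by~\Cref{thm:partitioned_graphs}.
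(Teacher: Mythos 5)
Your overall architecture (reduce to the connected case via \Cref{lem:sums_rooted}, then peel off the root with $\Psi$ and induct) is different from the paper's, which uses no induction at all: the paper upgrades the root $r$ to an anchor $R'$ (namely $\{r\}$ if $r$ is a cut vertex, and otherwise the unique block $B_r$ containing $r$), checks that the quantum isomorphism is adapted to $(G,R')$ and $(H,S')$, and then invokes the block-structure theorem of~\cite{freslon2025blockstructures} directly: quantum isomorphic anchored graphs have isomorphic rooted block structures with quantum isomorphic blocks. Superrigidity of complete graphs then forces every block of $H$ to be complete, and because complete blocks admit arbitrary bijections as automorphisms, the block isomorphisms glue to an isomorphism $(G,R')\simeq(H,S')$, which is corrected to an isomorphism of rooted graphs at the end. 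A $\Psi$-based induction could in principle be made to work, but it is genuinely more machinery than is needed here.

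The real problem is the step you flag yourself and then dispose of too quickly: establishing that $H$ is a block graph, or more precisely that the quantum isomorphism respects the block decompositions of $G$ and $H$ (so that, in particular, the block of $H$ containing $s$ is complete and $\Psi(H,S)$ is legitimately a rooted graph matched to $\Psi(G,R)$). \Cref{lem:fulton} only preserves the distance $d(x,y)$ versus $d(a,b)$ for pairs with $u_{xa}u_{yb}\neq 0$; it does not, by itself, show that cut vertices, $2$-connectedness, or the block decomposition are preserved, and $0$-hyperbolicity is a condition on \emph{all} quadruples of vertices, for which Fulton's lemma gives no handle (one would need nonvanishing of products of four generators). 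The preservation of block structure is precisely the main theorem of the companion paper~\cite{freslon2025blockstructures} and is the essential external input of the paper's proof; it is not a routine consequence of metric preservation. Citing Theorem~A instead is circular, since Theorem~A is deduced from the superrigidity of block graphs (\Cref{thm:block_graphs1}), which rests on this very lemma. Until that input is identified and used (together with the adaptedness of $U$ to the block anchors, which the paper gets from~\cite[Lemma 3.5]{freslon2025blockstructures}), the inductive scheme has no way to get started on the $H$ side, so as written the argument has a genuine gap.
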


\begin{proof}
	By~\Cref{lem:sums_rooted}, it is enough to treat the case where $G$ and $H$ are both connected. Let $R=\{r\}$ and $S=\{s\}$.
	
	Let us define the following anchored graphs. If $r$ is a cut vertex of $G$ (and thus $s$ a cut vertex of $H$), define $R'=R$ and $S'=S$. If not, let $B_r$ be the unique block containing $r$ and $B_s$ be the unique block containing $s$, and set $R'=B_r$ and $S'=B_s$. In both cases, $(G,R')$ and $(H,S')$ are connected anchored graphs.
	
	We claim that $(G,R')$ and $(H,S')$ are quantum isomorphic as anchored graphs. In case where $S$ and $R$ are cut vertices, there is nothing to prove. So, let us assume that it is not the case and let $U$ be a quantum isomorphism of rooted graphs from $(G,R)$ to $(H,S)$. 
	All there is to prove is that $U = \Diag(U_0,U_1)$, with $U_1$ indexed by $S\times R$. 
	Let $x \in V(G)$ and $a\in V(H)$, and assume that $u_{ax}\neq 0$. 
	Since $u_{sr} = 1$, we have $0\neq u_{xa}u_{sr}$. 
	Hence, if $x\in R$, it is in the same block as $r$, and by~\cite[Lemma 3.5]{freslon2025blockstructures} $a$ is in the same block as $s$, which is $S$. 
	Similarly, if $a\in S$, then $x\in R$, which completes the proof of the claim.
	
	Hence by Theorem 5.2 of~\cite{freslon2025blockstructures} $(G,R')$ and $(H,S')$ have isomorphic rooted block structures with quantum isomorphic blocks. 
	Since the blocks of $G$ are complete graphs, and complete graphs clearly form a superrigid family, the blocks of $H$ are isomorphic to their corresponding blocks in $G$. 
	Since the blocks are complete, one can glue back these isomorphisms and obtain an isomorphism from $G$ to $H$ sending $R'$ to $S'$. 
	
	Now if $r$ and $s$ are cut vertices, we have the desired isomorphism of rooted graphs. 
	Otherwise, $r$ is an internal vertex of $R'$, which induces a complete graph. Hence up to automorphism, there is an isomorphism from $(G,r)$ to $(H,s)$. 
	This concludes the proof.
\end{proof}

Next, we study the quantum and classical automorphism groups of rooted block graphs.

\begin{lemma}\label{thm:qu_rooted_block_graphs}
	Let $(G,r)$ be a connected rooted block graph on $n \geq 2$ vertices. Then, there exists $k \in \N$ and for each $i \in \{1,\dots,k\}$, there exists a positive integer $a_i$ and a connected rooted block graph $(G_i,r_i)$ on strictly less than $n$ vertices that are two-by-two not isomorphic such that:
	$$
	\Qu(G,r) = \Qu\left(\bigoplus_{i=1}^k a_i.(G_i,r_i)\right)
	\text{ and }
	\Aut(G,r) = \Aut\left(\bigoplus_{i=1}^k a_i.(G_i,r_i)\right).
	$$
	Moreover, if $\bigoplus_{i=1}^k a_i.(G_i,r_i)$ satisfies Schmidt's criterion, so does $(G,r)$. 
\end{lemma}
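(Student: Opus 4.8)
The statement asks us to decompose a connected rooted block graph $(G,r)$ into copies of strictly smaller connected rooted block graphs, and to transfer the quantum automorphism group, the classical automorphism group, and Schmidt's criterion through this decomposition. The natural engine is the $\Psi$-operation together with the decomposition results of~\Cref{sec:psi}. First I would note that since $(G,r)$ is a rooted block graph, the root $r$ is either a cut vertex or an internal/non-cut vertex of a unique block $B_r$, which is complete. I would set the anchor $Q = \{r\}$ if $r$ is a cut vertex and $Q = B_r$ otherwise, so that $(G,Q)$ is a connected anchored graph to which $\Psi$ applies.

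\textbf{Applying the $\Psi$-decomposition.}
Writing $\Psi(G,Q) = (G',R)$, the components of $G'$ are connected rooted block graphs on strictly fewer vertices than $G$ (this is exactly the point-count observation made after~\Cref{lem:psi_u}; one checks that removing the block edges or splitting at the cut vertex leaves block graphs). I would then group these components up to rooted isomorphism, collecting isomorphic ones with multiplicities, to write $\Psi(G,Q) = \bigoplus_{i=1}^k a_i.(G_i,r_i)$ with the $(G_i,r_i)$ pairwise non-isomorphic. The crucial ingredient is that the anchor $Q$ is \emph{complete}: when $r$ is a cut vertex, $Q=\{r\}$ is trivially complete, and when $r$ is internal, $Q=B_r$ is complete by hypothesis. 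Thus~\Cref{thm:iso} applies and the morphism $\varphi$ of~\Cref{thm:quotient} is an isomorphism, giving $\Qu(G,r)=\Qu(G,Q)\simeq\Qu(\Psi(G,Q))=\Qu\bigl(\bigoplus_{i=1}^k a_i.(G_i,r_i)\bigr)$, which is the quantum identity. For the classical statement I would abelianise: the abelianisation of $\Qu$ recovers $C(\Aut)$, so the quantum isomorphism descends to $\Aut(G,r)\simeq\Aut\bigl(\bigoplus_{i=1}^k a_i.(G_i,r_i)\bigr)$. Alternatively, one argues directly that $\Psi$ induces a bijection on automorphisms, since an automorphism of $(G,r)$ fixing the complete anchor is the same datum as an automorphism of the split/edge-removed graph preserving the roots.

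\textbf{Transferring Schmidt's criterion.}
For the last assertion, suppose $\bigoplus_{i=1}^k a_i.(G_i,r_i)$ satisfies the rooted Schmidt criterion, i.e.\ it admits two nontrivial rooted automorphisms with disjoint support. Under the identification $\Aut(G,r)\simeq\Aut\bigl(\bigoplus_{i=1}^k a_i.(G_i,r_i)\bigr)$ these two automorphisms correspond to two automorphisms of $(G,r)$. The point to verify is that the supports remain disjoint and the automorphisms remain nontrivial after gluing back along the anchor $Q$. Since $\Psi$ only modifies the graph at the anchor $Q$ (removing block edges or identifying copies of $r$), an automorphism of $G'$ fixing $R$ pointwise on the split copies corresponds to an automorphism of $G$ fixing $Q$; the support of the glued automorphism is contained in the union of supports of its restrictions to the components, so disjoint supports in $G'$ yield disjoint supports in $G$, and nontriviality is preserved because the action away from $Q$ is unchanged.

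\textbf{Main obstacle.}
The principal subtlety is the Schmidt-criterion transfer, specifically checking that the support-disjointness is genuinely preserved when the anchor is a nontrivial complete block rather than a single cut vertex: one must confirm that neither of the two automorphisms acts nontrivially within $B_r$ in a way that destroys disjointness after the copies of $r$ are reglued. I expect this to reduce to the fact that automorphisms fixing the root-block setwise restrict to the components of $\Psi(G,Q)$, so their supports live in disjoint component-vertex-sets away from the anchor; the bookkeeping is routine but is the step most prone to a hidden edge case, so I would treat it carefully rather than by a one-line appeal.
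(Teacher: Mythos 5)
Your cut-vertex case is correct and is exactly what the paper does (including the observation that the natural isomorphism $\Aut(\Psi(G,r))\to\Aut(G,r)$ only shrinks supports, which handles the Schmidt transfer). The problem is the internal-vertex case. There you anchor at $Q=B_r$ and assert $\Qu(G,r)=\Qu(G,Q)$, but these are genuinely different quantum groups: $\Qu(G,B_r)=\Stab_{\Qu(G)}(B_r)$ only stabilises the block \emph{setwise} and does not fix the root, whereas $\Qu(G,r)=\Fix_{\Qu(G)}(r)$ must fix $r$. Already for $G=K_2$ your recipe gives $\Qu(K_2,B_r)=\Qu(K_2)=\Z_2$ while $\Qu(K_2,r)$ is trivial; for $G=K_3$ it gives $S_3^+=S_3$ while $\Qu(K_3,r)=\Z_2$. (One direction of your claimed equality does hold -- fixing $r$ forces stabilising $N(r)=B_r\setminus\{r\}$, hence $B_r$ -- but the converse fails, and it is the converse you need.) The same error propagates to the classical and Schmidt statements, since $\Psi(G,B_r)$ turns all internal vertices of $B_r$, including $r$ itself, into interchangeable isolated roots.

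The paper's route around this is different in a way that matters: when $r$ is internal of degree $\geq 2$ in the block $b$, it first \emph{deletes} $r$ and works with the anchored graph $(G',b')$ where $G'=G\setminus\{r\}$ and $b'=b\setminus\{r\}$ (still a complete block of $G'$), proving $\Qu(G,r)=\Qu(G',b')$ by an explicit two-sided magic-unitary argument (the key computation being $LW=L$ for the row of $\Adj(G)$ indexed by $r$, using that $W$ stabilises $b'$ and that $N(r)=b'$). Only then does it apply $\Psi$ and \Cref{thm:iso} to $(G',b')$. This intermediate identification is not a consequence of \Cref{thm:quotient} or \Cref{thm:iso} and must be proved separately; it is the step your proposal is missing. (The paper also treats the degree-1 root separately via $\Qu(G,r)=\Qu(G\setminus\{r\},r')$, which your scheme would likewise get wrong, as the $K_2$ example shows.)
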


\begin{proof}
	First, assume that $r$ is a vertex of degree 1 in $G$ and let $r'$ be its unique neighbour. Note that $\Qu(G,r) = \Qu(G\setminus \{r\},r')$ and that $\Aut(G,r) = \Aut(G\setminus \{r\},r')$. Moreover, $G'$ has strictly less vertices than $G$ does. The second part of the statement is clear. 
	
	So, we may assume that $r$ has degree at least 2. To prove the theorem, it is enough to find a disconnected rooted block graph $(H,R)$ such that each of its connected components has at most $n-1$ vertices, $\Qu(H,R)=\Qu(G,r)$, $\Aut(H,R)=\Aut(G,r)$, and that the second part of the statement holds. 
	
	If $r$ is a cut vertex, set $(H,R) = \Psi(G,r)$. Then, $\Qu(H,R)=\Qu(G,r)$ by \Cref{thm:iso}. In this case, it is also clear that $\Aut(H,R)=\Aut(G,r)$. For the second part of the statement, note that the natural classical isomorphism $\phi: \Aut(\Psi(G,r)) \to \Aut(G,r)$ satisfies $\Supp(\phi(f)) = \Supp(f) \setminus R \subseteq \Supp(f)$. Hence, if $f$ and $g$ are two nontrivial automorphisms of $\bigoplus_{i=1}^k a_i.(G_i,r_i)$ with disjoint support, then $\phi(f)$ and $\phi(g)$ are nontrivial automorphisms of $(G,r)$ with disjoint support. Thus the second part holds. 
 	
	If $r$ is an internal vertex of a block $b$, then since the degree of $r$ is at least 2, we have that $b'=b\setminus \{r\}$ is a block of~$G'=G\setminus \{r\}$. We set $(H,R) = \Psi(G', b')$ and we have that $\Qu(H,R) = \Qu(\Psi(G',b'))$ by~\Cref{thm:iso}. 
	It remains to show that $\Qu(G,r) = \Qu(G',b')$. 
	Let $\Qu(G,r) = (A,U)$ and $\Qu(G',b') = (B,W)$. 
	We have $U = \Diag(U_0,1)$ with $U_0$ a magic unitary adapted to $G'$, hence there is a (unique) unital $*$-morphism $f \colon B\to A$ such that $f(w_{xy}) = u_{xy}$ for any $x$, $y\in V(G') = V(G)\setminus\{r\}$. 
	Moreover, it is clear that $f$ is a quantum group morphism. 
	Now let $V = \Diag(W,1)$ be indexed by $V(G)$, with 1 corresponding to $v_{rr}$. 
	We claim that $V$ commutes with $M = \Adj(G)$. 
	Indeed, let us write $M = \begin{pmatrix}
		M_0 & {}^tL\\
		L & 0
	\end{pmatrix}$ where $M_0 = \Adj(G')$ and $L$ is indexed by $V(G')\times \{r\}$. 
	We have $WM_0 = M_0W$ by definition of $W$. 
	Let $x\in V(G')$, since $b$ is complete, we have:
	\[ [LW]_{rx} = \sum_{y\in V(G')} l_{ry}w_{yx} = \sum_{y\sim r} w_{yx} = \sum_{y\in b'} w_{yx}.\]
	Since $W$ stabilises $b'$, we have that $w_{yx} = 0$ if $y\in b'$ and $x\notin b'$, and $\sum_{y\in b'} w_{yx} = 1$ if $x\in b'$. 
	Hence we have shown that $[LW]_{rx} = l_{rx}$, which implies that $LW = L$. 
	Taking transpose, we obtain that ${}^tW{}^tL = {}^tL$, and since $W$ is a unitary we reach ${}^tL = W{}^tL$. 
	Hence $V$ commutes with $M$, as desired. 
	Since it clearly fixes $r$, we have a (unique) unital $*$-morphism $h\colon A\to B$ such that $h(u_{xy}) = v_{xy}$. 
	It is straightforward to check that $f$ and $h$ are inverse of one another, hence $f$ is a quantum group isomorphism, as desired.
	
	The classical case is straightforward.
	The second part of the statement follows from a similar argument as above, as well as noticing that for every $g \in \Aut(G',b')$, one can extend $g$ to an automorphism of $(G,r)$ while keeping the same support.
\end{proof}

Thanks to the previous result, we can prove that rooted block graphs satisfy the Schmidt alternative. We start with a general lemma.

\begin{lemma}\label{lem:sum_sa}
	Let $(G_1,r_1),\ldots,(G_k,r_k)$ be rooted connected graphs two-by-two not quantum isomorphic and assume that the family $\{(G_1,r_1),\ldots,(G_k,r_k)\}$ satisfies the Schmidt alternative.
	Let $a_1,\ldots,a_k\in \N$, and let $(G,R) = \bigoplus_{i=1}^k a_i.(G_i,r_i)$. 
	If $(G,R)$ does not satisfy Schmidt's criterion, then $(G,R)$ does not have quantum symmetry. 
\end{lemma}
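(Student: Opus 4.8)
The plan is to prove the statement directly: assuming that $(G,R)$ does not satisfy Schmidt's criterion, I will show that the algebra underlying $\Qu(G,R)$ is commutative. The starting point is the decomposition from~\Cref{thm:qu_sums_rooted_graphs}: since the $(G_i,r_i)$ are pairwise non quantum isomorphic, we have
\[ \Qu(G,R) = \bigast_{i=1}^k \Qu(G_i,r_i)\wr S_{a_i}^+. \]
Because the (full, universal) free product of unital $*$-algebras is commutative if and only if at most one factor differs from $\C$ and that factor is itself commutative, it suffices to prove two things: first, that at most one of the factors $\Qu(G_i,r_i)\wr S_{a_i}^+$ is nontrivial; and second, that this distinguished factor has a commutative algebra. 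Both facts will be read off from the absence of Schmidt's criterion by exhibiting explicit automorphisms.

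For the first point, I would observe that the factor $\Qu(G_i,r_i)\wr S_{a_i}^+$ is nontrivial exactly when $a_i\geq 2$ or $\Qu(G_i,r_i)\neq \C$ (recall $S_1^+=\C$). In the former case, transposing two copies of $(G_i,r_i)$ gives a nontrivial automorphism of $(G,R)$ whose support lies in the $i$-th block; in the latter case the family satisfies (QA) (as (SA) implies (QA)), so $(G_i,r_i)$ admits a nontrivial automorphism, which applied to a single copy again produces a nontrivial automorphism of $(G,R)$ supported in the $i$-th block. If two distinct factors were nontrivial, the two resulting automorphisms would have disjoint supports, contradicting the hypothesis. Hence at most one factor, say the $i_0$-th, is nontrivial, all others equal $\C$, and therefore $\Qu(G,R) = \Qu(G_{i_0},r_{i_0})\wr S_{a_{i_0}}^+$.

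For the second point, write $a = a_{i_0}$ and $\Gg = \Qu(G_{i_0},r_{i_0})$, and note that the hypothesis also forbids two nontrivial automorphisms with disjoint support supported inside the $i_0$-th block. I would split on $a$. If $a\geq 4$, the copy-transpositions $(1\,2)$ and $(3\,4)$ have disjoint support, which is impossible; so $a\leq 3$. If $a\in\{2,3\}$ and $\Gg\neq\C$, applying a nontrivial automorphism of $(G_{i_0},r_{i_0})$ to copy $1$ alone and to copy $2$ alone yields disjoint supports, again impossible; so $\Gg=\C$, whence the factor is $S_a^+$ with $a\leq 3$, which is commutative. Finally, if $a=1$, the factor is $\Gg$ itself, and the constraint says exactly that $(G_{i_0},r_{i_0})$ does not satisfy Schmidt's criterion; by (SA) for the family, $\Gg$ is then commutative. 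In every case the distinguished factor is commutative, so $\Qu(G,R)$ is commutative.

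The main obstacle is the translation between the algebraic notion of commutativity of $\Qu(G_{i_0},r_{i_0})\wr S_{a_{i_0}}^+$ and the combinatorial Schmidt data of the $i_0$-th block. The delicate inputs are the identities $S_2^+=S_2$ and $S_3^+=S_3$ (which keep a free wreath product over $\C$ with at most three copies commutative) and the correct application of (SA) in the single-copy case $a=1$; once these are secured, the explicit automorphisms—copy-transpositions and single-copy automorphisms—make each case routine. I would also be careful that the reduction to one factor genuinely relies on the \emph{full} free product, in which two nontrivial factors never commute, rather than on any reduced version.
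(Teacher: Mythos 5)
Your proposal is correct and follows essentially the same route as the paper: both reduce via \Cref{thm:qu_sums_rooted_graphs} to a free product of free wreath products, use the absence of Schmidt's criterion (together with (SA) $\Rightarrow$ (QA)) to kill all but one factor, and then handle that factor by the case split $a_l>1$ (forcing $\Qu(G_l,r_l)=\C$ and $a_l\leq 3$, so the factor is the commutative $S_{a_l}^+$) versus $a_l=1$ (where (SA) applies directly). The only difference is presentational: you spell out the explicit copy-transpositions and single-copy automorphisms that the paper summarises as ``it is relatively easy to see.''
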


\begin{proof}
	By~\Cref{thm:qu_sums_rooted_graphs}, we have $\Aut(G,R) = \prod_{i=1}^k \Aut(G_i,r_i)\wr S_{a_i}$. 
	Hence, it is relatively easy to see that since $(G,R)$ does not satisfy Schmidt's criterion, there is an integer $1\leq l\leq k$ such that;
	\begin{itemize}
		\item for all $i\neq l$, we have $\Aut(G_i,r_i) = 1$ and $a_i=1$,
		\item $a_l\leq 3$,
		\item if $a_l>1$, then $\Aut(G_l,r_l) = 1$,
	\end{itemize}
	where $1$ stands for the trivial group. 
	Since the $(G_i,r_i)$ satisfy the Schmidt alternative (SA), and (SA) implies (QA), we have that $\Qu(G_i,r_i) = \C$ when $i\neq l$. 
	Hence $\Qu(G,R) = \Qu(G_l,r_l)\wr S_{a_l}^+$ by~\Cref{thm:qu_sums_rooted_graphs}. 

	If $a_l>1$, then $\Aut(G_l,r_l) = 1$, and by assumption $\Qu(G_l,r_l) = \C$. 
	Hence $\Qu(G,R) = S_{a_l}^+$, whose algebra is commutative since $a_l\leq 3$. Therefore, $(G,R)$ does not have quantum symmetry. 
	
	If $a_l=1$, then $\Qu(G,R) = \Qu(G_l,r_l)$. 
	Since $(G_l,r_l)$ does not satisfy Schmidt's criterion (otherwise $(G,R)$ would), by the Schmidt alternative, $(G_l,r_l)$ does not have quantum symmetry, hence $C(\Qu(G_l,r_l)) = C(\Qu(G,R))$ is commutative. Hence, $(G,R)$ does not have quantum symmetry. 
\end{proof}

\begin{lemma}\label{thm:rooted_bg_sa}
	Rooted block graphs satisfy the Schmidt alternative. 
\end{lemma}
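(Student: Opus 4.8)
The plan is to prove the statement by strong induction on the number of vertices $n = \abs{V(G)}$ of the rooted block graph $(G,R)$. Recall that one implication of the Schmidt alternative is completely general: by the remark closing \Cref{subsec:superrigid_tractable}, any rooted graph satisfying Schmidt's criterion has quantum symmetry. Thus it suffices to establish the converse for rooted block graphs, and I would do so in contrapositive form, namely: if a rooted block graph does \emph{not} satisfy Schmidt's criterion, then it has no quantum symmetry. The engine of the induction will be \Cref{lem:sum_sa}, whose hypotheses I will verify using \Cref{lem:qi_superrigid_rooted} (to pass from ``pairwise non-isomorphic'' to ``pairwise not quantum isomorphic'') and the induction hypothesis (to know that the smaller pieces satisfy the Schmidt alternative).

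The base case $n = 1$ is immediate, since then $\Qu(G,R) = \C$ is commutative and $\Aut(G,R)$ is trivial, so neither quantum symmetry nor Schmidt's criterion can hold. For the inductive step, I would split according to whether $(G,R)$ is connected. If $(G,R) = \bigoplus_{i=1}^k a_i.(G_i,r_i)$ is disconnected, I group its connected components into pairwise non-isomorphic connected rooted block graphs $(G_i,r_i)$, each on strictly fewer than $n$ vertices. By \Cref{lem:qi_superrigid_rooted} these are then pairwise not quantum isomorphic, and by the induction hypothesis the family $\{(G_1,r_1),\dots,(G_k,r_k)\}$ satisfies the Schmidt alternative. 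Then \Cref{lem:sum_sa} applies verbatim and yields that if $(G,R)$ does not satisfy Schmidt's criterion, it has no quantum symmetry; together with the general implication this gives the Schmidt alternative for $(G,R)$.

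If instead $(G,R) = (G,r)$ is connected with $n \geq 2$, I would invoke \Cref{thm:qu_rooted_block_graphs} to produce a disconnected rooted block graph $(H,S) = \bigoplus_{i=1}^k a_i.(G_i,r_i)$ whose components $(G_i,r_i)$ are pairwise non-isomorphic connected rooted block graphs, each on strictly fewer than $n$ vertices, and satisfying $\Qu(G,r) = \Qu(H,S)$, $\Aut(G,r) = \Aut(H,S)$, together with the implication that Schmidt's criterion for $(H,S)$ forces Schmidt's criterion for $(G,r)$. Exactly as in the disconnected case, the components satisfy the Schmidt alternative by induction and are pairwise not quantum isomorphic, so \Cref{lem:sum_sa} shows that the failure of Schmidt's criterion for $(H,S)$ forces $(H,S)$ to have no quantum symmetry. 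Now assume $(G,r)$ fails Schmidt's criterion; by the contrapositive of the last clause of \Cref{thm:qu_rooted_block_graphs}, $(H,S)$ fails it too, hence $(H,S)$ has no quantum symmetry, and since $\Qu(G,r) = \Qu(H,S)$ neither does $(G,r)$. This closes the induction.

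The only genuine subtlety — and the step I would be most careful about — is that the induction must be well-founded: in both cases the reduction lands on connected rooted block graphs with \emph{strictly fewer} vertices, even though the auxiliary graph $(H,S)$ in the connected case may itself have \emph{more} vertices than $G$ (the duplication of a cut vertex in $\Psi$ increases the vertex count). Since \Cref{lem:sum_sa} and the induction hypothesis are only ever invoked for the small components $(G_i,r_i)$, this causes no problem. The second point to handle with care is that \Cref{thm:qu_rooted_block_graphs} provides only the one-directional transfer of Schmidt's criterion, but read contrapositively it is precisely what the argument needs, since we reason from the failure of the criterion. All remaining content is bookkeeping, the structural and quantum-group input being supplied by \Cref{thm:qu_rooted_block_graphs}, \Cref{lem:sum_sa}, and \Cref{lem:qi_superrigid_rooted}.
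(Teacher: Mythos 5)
Your proposal is correct and follows essentially the same route as the paper: induction on the number of vertices, reduction of the connected case via \Cref{thm:qu_rooted_block_graphs} to a disjoint sum of strictly smaller pairwise non-isomorphic (hence, by \Cref{lem:qi_superrigid_rooted}, pairwise non-quantum-isomorphic) connected rooted block graphs, and then \Cref{lem:sum_sa} applied under the induction hypothesis. The only cosmetic difference is that the paper dispatches the disconnected case once at the outset rather than inside the induction, and your explicit remark that the induction is well-founded on the component sizes (not on $\abs{V(H)}$) is a point the paper leaves implicit.
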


\begin{proof}
	Notice that by~\Cref{lem:sum_sa,lem:qi_superrigid_rooted} it is enough to prove it for connected rooted block graphs. 
	We prove it by induction on the number of vertices, it is clear for $K_1$. 
	So let us assume that all connected rooted block graphs on at most $n$ vertices satisfy the Schmidt alternative for some $n\geq 1$. 
	Let $(G,r)$ be a connected rooted block graph on $n+1$ vertices not satisfying Schmidt's criterion. 
	By~\Cref{thm:qu_rooted_block_graphs}, for some $k\geq 1$, there exit $(G_1,r_1),\ldots,(G_k,r_k)$, connected rooted block graphs on strictly less than $n+1$ vertices, two-by-two not isomorphic (hence not quantum isomorphic by \Cref{lem:qi_superrigid_rooted}), together with $a_1,\ldots,a_k\geq 1$, such that:
	$$
	\Qu(G,r) = \Qu\left(\bigoplus_{i=1}^k a_i.(G_i,r_i)\right)
	\text{ and }
	\Aut(G,r) = \Aut\left(\bigoplus_{i=1}^k a_i.(G_i,r_i)\right).
	$$
	Set $(H,S)=\bigoplus_{i=1}^k a_i.(G_i,r_i)$. By \Cref{thm:qu_rooted_block_graphs}, $(H,S)$ does not satisfy Schmidt's criterion. By induction hypothesis, we can apply \Cref{lem:sum_sa} to conclude that $(H,S)$ does not have quantum symmetry. Since $C(\Qu(G,r)) = C(\Qu(H,S))$, we have that $C(\Qu(G,r))$ is commutative and that $(G,r)$ does not have quantum symmetry.
\end{proof}

\subsection{Block graphs}

We now reach our main goal and describe the quantum properties of block graphs.

We start by an observation about block graphs naturally forming anchored graphs.

\begin{proposition} \label{prop:block_anchored}
	Let $G$ be a block graph and $Z=Z(G)$ be its centre. Then, $(G,Z)$ is an anchored graph. Moreover, $\Qu(G) = \Qu(G,Z)$. 
\end{proposition}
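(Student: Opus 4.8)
The plan is to identify the centre $Z$ with the canonical anchor $Q$ attached to $G$ just before \Cref{prop:every_graph_anchored}, and then to read off both assertions from that proposition together with \Cref{thm:complete_block}. Throughout I treat the connected case, which is the relevant one here: for a connected graph the anchor condition simply asks that $Z$ be a single block or a single cut vertex, and indeed the first assertion is essentially the observation already recorded just after \Cref{thm:complete_block}.

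First I would invoke \Cref{thm:centre_block}: the centre $Z$ of a connected graph is either a cut vertex $\{r\}$ or is contained in a unique block $B_Z$. In the former case the anchor defined before \Cref{prop:every_graph_anchored} is $Q = \{r\} = Z$, and $Z$ is a cut vertex, so $(G,Z)$ is anchored and nothing more is needed. In the latter case, since $G$ is a block graph the block $B_Z$ is complete, so the hypotheses of \Cref{thm:complete_block} are satisfied: the centre lies in the complete block $B_Z$ and is not a cut vertex. That theorem then forces $Z = B_Z$. As $Q = B_Z$ by definition, we again obtain $Q = Z$, now a block. In both cases $Z$ is either a block or a cut vertex, which is precisely the condition for $(G,Z)$ to be a connected anchored graph; this establishes the first assertion.

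For the second assertion, the identification $Z = Q$ just obtained lets me apply \Cref{prop:every_graph_anchored} verbatim, yielding the canonical isomorphism $\Qu(G) \simeq \Qu(G,Q) = \Qu(G,Z)$.

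I do not expect a genuine obstacle: the substantive work is carried out in \Cref{thm:complete_block}, and the present statement amounts to the remark that, for block graphs, the centre coincides \emph{on the nose} with the anchor $Q$ rather than being merely contained in the block $B_Z$ (as can happen for general graphs). The only care needed is the clean separation into the two cases of \Cref{thm:centre_block} and verifying in each that the anchor $Q$ produced by \Cref{prop:every_graph_anchored} agrees with $Z$.
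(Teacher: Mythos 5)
Your proof is correct and follows essentially the same route as the paper: the first assertion is deduced from \Cref{thm:complete_block} (via the case split of \Cref{thm:centre_block}), and the second from \Cref{prop:every_graph_anchored} once $Z$ is identified with the canonical anchor $Q$. Your restriction to the connected case is consistent with the paper's standing convention on anchored graphs.
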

\begin{proof}
	The fact that $(G,Z)$ is an anchored graph follows directly from \Cref{thm:complete_block}. The second part follows from \Cref{prop:every_graph_anchored} along with \Cref{thm:complete_block}. 
\end{proof}

The following lemma relates isomorphism and $\Psi$ in the case where the anchors induce complete graphs. 
\begin{lemma}\label{lem:block_graphs_psi}
	Let $(G,Q)$ and $(H,S)$ be two connected anchored graphs on the same number of vertices. Moreover, assume that $Q$ and $S$ induce complete graphs.
	If $\Psi(G, Q)\simeq \Psi(H,S)$, then $G\simeq H$.
\end{lemma}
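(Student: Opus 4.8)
The plan is to reconstruct $G$ from $\Psi(G,Q)$ (and likewise $H$ from $\Psi(H,S)$) and to show that a rooted-graph isomorphism is all that is needed to transport such a reconstruction. Since $(G,Q)$ is a connected anchored graph whose anchor is complete, $Q$ is either a single cut vertex (inducing $K_1$) or a complete block (necessarily with $\lvert Q\rvert\geq 2$); the same holds for $S$. If $\lvert V(G)\rvert=\lvert V(H)\rvert\leq 1$ the statement is trivial, so I would assume both graphs have at least two vertices. The first and crucial step is to observe that the two possible anchor types leave a different fingerprint on the vertex count of $\Psi$: in the cut-vertex case the construction replaces $r$ by its $k\geq 2$ copies, so $\lvert V(\Psi(G,Q))\rvert=\lvert V(G)\rvert+k-1>\lvert V(G)\rvert$, whereas in the block case $\Psi$ only deletes edges and keeps every vertex, so $\lvert V(\Psi(G,Q))\rvert=\lvert V(G)\rvert$.

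Since $\Psi(G,Q)\simeq\Psi(H,S)$ forces $\lvert V(\Psi(G,Q))\rvert=\lvert V(\Psi(H,S))\rvert$, and we are given $\lvert V(G)\rvert=\lvert V(H)\rvert$, the mixed situation (one anchor a cut vertex, the other a block) is impossible: it would read $\lvert V(G)\rvert+k-1=\lvert V(H)\rvert=\lvert V(G)\rvert$, hence $k=1$, contradicting $k\geq 2$. Thus $Q$ and $S$ are of the same type, and I would treat the two remaining cases separately.

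If both anchors are cut vertices, then $G$ is obtained from the rooted graph $\Psi(G,Q)$ by identifying all of its roots into a single vertex, and $H$ is obtained from $\Psi(H,S)$ in the same way. This "wedge at the roots" operation is well defined up to isomorphism and compatible with isomorphisms of rooted graphs, so any rooted isomorphism $\Psi(G,Q)\xrightarrow{\sim}\Psi(H,S)$ descends to an isomorphism $G\simeq H$. If both anchors are complete blocks, note first that a rooted isomorphism matches the roots bijectively, so $\lvert Q\rvert$, which equals the number of roots of $\Psi(G,Q)$, equals $\lvert S\rvert$; here $G$ is recovered from $\Psi(G,Q)=(G',Q)$ by adding back all edges inside $Q$, that is, by completing the set of roots into $K_{\lvert Q\rvert}$, and likewise for $H$. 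Completing the roots is again compatible with isomorphisms of rooted graphs having the same number of roots, so the same root-respecting bijection is an isomorphism $G\simeq H$.

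The only genuinely delicate point is the bookkeeping of the first step: everything hinges on the elementary vertex-count identity, and it is precisely there that the hypothesis $\lvert V(G)\rvert=\lvert V(H)\rvert$ is used, to exclude the mixed case. Once the two anchors are known to be of the same type, the reconstruction is routine; the main care is to verify that the identification (respectively, the completion) of the roots turns the rooted-graph isomorphism into an honest graph isomorphism between $G$ and $H$, which follows directly from the definition of $\Psi$.
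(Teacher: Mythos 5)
Your proposal is correct and follows essentially the same route as the paper: both arguments use the vertex count of $\Psi$ (unchanged exactly when the anchor is a block, strictly increased when it is a cut vertex split into $k\geq 2$ copies) together with $\lvert V(G)\rvert=\lvert V(H)\rvert$ to rule out the mixed case, and then reconstruct $G$ and $H$ by identifying the roots (cut-vertex case) or adding back the edges of the complete anchors (block case). Your write-up is somewhat more explicit than the paper's about why the reconstruction is compatible with a root-preserving isomorphism, but the content is the same.
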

\begin{proof}
	Note that for an anchored graph $(F,R)$, we have $\abs{V(\Psi(F))} = \abs{V(F)}$ if and only if $R$ is a block of $F$. Hence, $Z(G)$ is a block of $G$ if and only if $Z(H)$ is a block of $H$ as well. 
		
	In the case where the centres are cut vertices, the proof is straightforward. 
	In the case where the centres are blocks, since both centres are complete graphs of the same size, by adding back the edges, the same isomorphism between $\Psi(G, Q)$ and $\Psi(H,S)$ is an isomorphism between $G$ and~$H$.
\end{proof}

We can now obtain the theorem on the quantum properties of block graphs, in the sense of \Cref{subsec:superrigid_tractable}.
\begin{theorem} \label{thm:block_graphs1}
	Block graphs form a superrigid tractable class of graphs.
\end{theorem}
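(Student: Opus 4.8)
The plan is to verify the two defining axioms (SR) and (SA); recall that (SR) strengthens (QI) and that (SA) implies (QA), so these two suffice. For (SA) the implication from Schmidt's criterion to quantum symmetry is general~\cite{schmidt2020criterion}, so only the converse requires proof. I begin with (SR). Let $U$ be a quantum isomorphism from a block graph $G$ to a graph $H$. By~\cite[Lemma 3.4]{freslon2025blockstructures} it preserves connected components and, via~\Cref{thm:partitioned_graphs}, restricts to quantum isomorphisms between paired components; as every component of $G$ is a connected block graph, I may assume $G$ and $H$ connected.

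Let $Q$ be the canonical anchor of $G$. By~\Cref{thm:complete_block} the centre $Z(G)$ is a block or a cut vertex, so $Q=Z(G)$ and $G[Q]$ is complete. Using that quantum isomorphisms preserve the centre and the unique block containing it, $U$ is adapted to the canonically anchored graphs $(G,Q)$ and $(H,Q_H)$, where $Q_H$ is the anchor of $H$. Applying~\Cref{thm:partitioned_graphs} to the partition $\{Q,\,V(G)\setminus Q\}$ shows that $U[Q_H,Q]$ is a square magic unitary, so $\abs{Q_H}=\abs{Q}$, and that it is a quantum isomorphism from $G[Q]$ to $H[Q_H]$; since a graph quantum isomorphic to a complete graph is complete, $H[Q_H]$ is complete, with $\abs{V(G)}=\abs{V(H)}$. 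By~\Cref{lem:psi_u}, $\Psi(U)$ is then a quantum isomorphism of rooted graphs from $\Psi(G,Q)$ to $\Psi(H,Q_H)$; the former is a rooted block graph, as its components are induced subgraphs of $G$, so~\Cref{lem:qi_superrigid_rooted} yields $\Psi(G,Q)\simeq\Psi(H,Q_H)$. As both anchors are complete and the graphs have equally many vertices,~\Cref{lem:block_graphs_psi} lifts this to $G\simeq H$, establishing (SR).

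For the converse in (SA), suppose a block graph $G$ fails Schmidt's criterion; I must show $\Qu(G)$ is commutative. If $G$ is connected, then by~\Cref{prop:block_anchored} we have $\Qu(G)=\Qu(G,Z)$ with $Z=Z(G)$ complete. When $Z=\{r\}$ is a cut vertex, every automorphism fixes $r$, so $\Aut(G)=\Aut(G,r)$, whence $(G,r)$ also fails the rooted Schmidt criterion and~\Cref{thm:rooted_bg_sa} makes $\Qu(G)=\Qu(G,r)$ commutative. When $Z=B$ is a block,~\Cref{thm:iso} identifies $\Qu(G)$ with $\Qu(\Psi(G,B))$, which is a rooted block graph; since $V(G)=V(\Psi(G,B))$ and $\Aut(G)$ is exactly the group of automorphisms preserving $B$, the two Schmidt criteria coincide with identical supports, and~\Cref{thm:rooted_bg_sa} again gives commutativity. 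For disconnected $G=\bigoplus_i a_i.C_i$, with the $C_i$ pairwise non-isomorphic (equivalently, by (SR), non--quantum-isomorphic) connected block graphs, the classical decomposition of the quantum automorphism group of iterated disjoint copies~\cite{bichon2004wreath,banica2007free} together with~\Cref{thm:partitioned_graphs} yields $\Qu(G)=\bigast_i \Qu(C_i)\wr_\ast S_{a_i}^+$; running the case analysis of~\Cref{lem:sum_sa}, fed by the connected case of (SA) and the (QA) it entails for the asymmetric factors, leaves only $\Qu(G)=S_a^+$ with $a\le 3$ or a single factor $\Qu(C)$ without Schmidt's criterion, both of which are commutative.

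The step I expect to be most delicate is the beginning of (SR): verifying that the given quantum isomorphism is automatically adapted to the canonical anchors and that $H[Q_H]$ is a genuine complete block of $H$ of the correct size. This is precisely what allows $\Psi$ and~\Cref{lem:block_graphs_psi} to be applied to the a priori uncontrolled graph $H$, and it rests on the preservation of the centre, and of the unique block containing it, under quantum isomorphism---an analogue for quantum isomorphisms of~\Cref{prop:every_graph_anchored}. Once this is in place, the disconnected part of (SA) is a routine transcription of~\Cref{lem:sum_sa}.
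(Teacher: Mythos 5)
Your proof is correct and follows essentially the same route as the paper: reduce to connected graphs, use the preservation of the centre and its block under quantum isomorphism (the paper's citation of~\cite[Lemma 3.7]{freslon2025blockstructures}) to pass to anchored graphs, apply $\Psi$ together with~\Cref{lem:qi_superrigid_rooted},~\Cref{thm:rooted_bg_sa},~\Cref{thm:iso} and~\Cref{lem:block_graphs_psi}. The only differences are cosmetic: you deduce completeness of $H$'s anchor directly from the quantum isomorphism of the anchor blocks rather than first showing all blocks of $H$ are complete, and you transcribe the disconnected case of (SA) from~\Cref{lem:sum_sa} instead of citing the corresponding reduction in~\cite{Meunier2023}; both are valid.
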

\begin{proof}
	Since (SR) implies (QI) and (SA) implies (QA) , we only need to prove (SR) and (SA). 
	
	We start by showing that block graphs are superrigid.
	By~\cite[Lemma 5.15]{Meunier2023}, it is enough to show it only for connected block graphs. 
	Let $G$ be a connected block graph and let $H$ be a graph quantum isomorphic to $G$. 
	First, notice that by~\cite[Theorem 5.3]{freslon2025blockstructures}, there is a one-to-one correspondence between the blocks of $G$ and the blocks of $H$ such that the corresponding blocks are quantum isomorphic. Since the blocks of $G$ are complete and complete graphs satisfy (SR), every block of $H$ is complete as well. So, $H$ is a block graph.  
	
	Moreover, by~\cite[Lemma 3.7]{freslon2025blockstructures} we have that $(G,Z(G)) \qi (H,Z(H))$. 
	Now, by~\Cref{lem:psi_u}, we have that $\Psi(G,Z(G)) \qi \Psi(H,Z(H))$. 
	Since $\Psi(G,Z(G))$ is a rooted block graph, by~\Cref{lem:qi_superrigid_rooted}, we have $\Psi(G,Z(G)) \simeq \Psi(H,Z(H))$. Thus, we have $G\simeq H$ by~\Cref{lem:block_graphs_psi}, as desired.

	Now, let us show that block graphs satisfy the Schmidt alternative. 
	Again, by~\cite[Theorem 5.6]{Meunier2023}, it is enough to show it for connected block graphs only. 
	Let $G$ be a connected block graph not satisfying Schmidt's criterion. Since every automorphism of $G$ preserves the centre, $(G,Z(G))$ does not satisfy Schmidt's criterion either. 
	With the same argument as in the proof of \Cref{thm:qu_rooted_block_graphs}, the natural isomorphism $\phi \colon \Aut(\Psi(G,Z(G))) \to \Aut(G,Z(G)) $, we know that $ \Psi(G,Z(G)) $ does not satisfy Schmidt's criterion either. Thus, by \Cref{thm:rooted_bg_sa}, $ \Psi(G,Z(G)) $ does not have quantum symmetry. Now, by \Cref{thm:iso}, we have that $(G,Z(G))$ does not have quantum symmetry. Finally, by \Cref{prop:block_anchored}, we conclude that $G$ does not have quantum symmetry.
\end{proof}

\begin{corollary}
	0-hyperbolicity is preserved under quantum isomorphism. 
\end{corollary}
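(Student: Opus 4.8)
The plan is to reduce the quantum isomorphism to a genuine isomorphism and then invoke the fact that the hyperbolicity $\delta(\cdot)$, being defined solely through the graph distance $d_G$, is invariant under isomorphism. In effect all the work has already been done in \Cref{thm:block_graphs1}; what remains is to package it together with the folklore characterisation of $0$-hyperbolic graphs recalled at the start of \Cref{sec:preliminaries}.

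Concretely, let $G$ be a $0$-hyperbolic graph and let $H \qi G$. First I would observe that a $0$-hyperbolic graph is connected, so that the folklore theorem applies and identifies $G$ with a block graph; if one prefers to allow disconnected graphs, one uses that a quantum isomorphism respects the partition into connected components (\Cref{lem:mu_partition_connected_components}) to treat each component separately and reassemble the conclusion. Since $G$ is then a block graph, property (SR) from \Cref{thm:block_graphs1} applies verbatim: any graph quantum isomorphic to a block graph is isomorphic to it, whence $H \simeq G$. Finally, isomorphic graphs have equal hyperbolicity, so $\delta(H) = \delta(G) = 0$ and $H$ is $0$-hyperbolic, as desired.

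I expect no genuine obstacle at this stage, precisely because the difficulty is entirely absorbed into establishing superrigidity for block graphs in \Cref{thm:block_graphs1}. The only point requiring a moment's care is the interface between the metric notion of $0$-hyperbolicity and the structural notion of block graph: one must apply the folklore equivalence on each connected component and check that hyperbolicity is genuinely an isomorphism invariant, which is immediate from its definition. With these routine verifications in place, the corollary follows at once.
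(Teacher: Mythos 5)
Your proposal is correct and follows exactly the paper's argument: the corollary is deduced from the superrigidity of block graphs (\Cref{thm:block_graphs1}) together with the equivalence between $0$-hyperbolic graphs and block graphs. The extra remarks about connectivity and isomorphism-invariance of $\delta$ are harmless elaborations of what the paper treats as immediate.
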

\begin{proof}
	Immediate from the fact that block graphs are superrigid and that 0-hyperbolic graphs are exactly block graphs.
\end{proof}

Let $G$ be a connected block graph. Notice that the isolated vertices of $\Psi(G,Z(G))$ are exactly the internal vertices of $Z(G)$. Let $z$ be the  number of internal vertices of $Z(G)$ (thus possibly $z=0$). Since block graphs are superrigid by \Cref{thm:block_graphs1}, we can write $\Psi(G,Z(G)) = z.(K_1,V(K_1)) \oplus  \left( \bigoplus_{i=1}^k a_i.(G_i,r_i) \right) $ where the graph $(G_i,r_i)$ are connected rooted block graphs, two-by-two not isomorphic. In what follows, we consider $S_0$ and $S_0^+$ to be trivial. 

\begin{theorem} \label{thm:block_graphs2}
	Let $G$ be a connected block graph. Writing 
	$$\Psi(G,Z(G)) =  z.(K_1,V(K_1)) \oplus  \left( \bigoplus_{i=1}^k a_i.(G_i,r_i) \right)$$ as above, we have:
	$$
	\Qu(G) = S_z^+ \ast \bigast_{i=1}^k \left( \Fix_{\Qu(G_i)}(r_i) \wr S_{a_i}^+\right) \text{ and } \Aut(G) = S_z \ast \prod_{i=1}^k \left( \Fix_{\Aut(G_i)}(r_i) \wr S_{a_i} \right).
	$$	
	Moreover, the family of automorphism groups and quantum automorphism groups of block graphs are respectively the families $\Jor(1)$ and $\Jor^+(\C)$. 
\end{theorem}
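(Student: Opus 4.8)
The plan is to establish the explicit formulas for $\Qu(G)$ and $\Aut(G)$ first and then read off the family characterisation from them; I would carry out the quantum case in full, the classical one being identical with the group-valued analogues of the lemmas. For the formulas, note that by \Cref{prop:block_anchored} the pair $(G,Z(G))$ is a connected anchored graph with complete anchor and $\Qu(G)=\Qu(G,Z(G))$. The components of $\Psi(G,Z(G))$ are the $z$ copies of $K_1$ and the $a_i$ copies of each $(G_i,r_i)$; as the $(G_i,r_i)$ are pairwise non-isomorphic connected rooted block graphs, \Cref{lem:qi_superrigid_rooted} makes them pairwise non-quantum-isomorphic (and none quantum isomorphic to $K_1$), so the hypothesis ``$(G_i,r_i)\simeq(G_j,r_j)$ whenever $(G_i,r_i)\qi(G_j,r_j)$'' of \Cref{thm:decomposition_qu_group_anchored} holds. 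Since $Z(G)$ is complete, that corollary yields an isomorphism $\Qu(G)\simeq(\C\wr S_z^+)\ast\bigast_{i=1}^k(\Qu(G_i,r_i)\wr S_{a_i}^+)$; using $\C\wr S_z^+=S_z^+$ and $\Qu(G_i,r_i)=\Fix_{\Qu(G_i)}(r_i)$ gives the stated formula, and the same computation with $\Aut$ gives the classical one.

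For the inclusion of the family of quantum automorphism groups of block graphs into $\Jor^+(\C)$, I would first prove by induction on $\abs{V(G)}$ that $\Qu(G,r)\in\Jor^+(\C)$ for every connected rooted block graph $(G,r)$. Indeed, \Cref{thm:qu_rooted_block_graphs} rewrites $\Qu(G,r)$ as the quantum automorphism group of a disjoint union of connected rooted block graphs on strictly fewer vertices, and \Cref{thm:qu_sums_rooted_graphs} presents the latter as a free product of free wreath products by $S^+$ of the quantum automorphism groups of those pieces; these lie in $\Jor^+(\C)$ by induction, and $\Jor^+(\C)$ is closed under both operations. The decomposition formula, the rooted statement applied to each $\Fix_{\Qu(G_i)}(r_i)=\Qu(G_i,r_i)$, and $S_z^+=\C\wr S_z^+\in\Jor^+(\C)$, then give $\Qu(G)\in\Jor^+(\C)$ for connected $G$; the standard free-product/free-wreath decomposition of $\Qu$ over connected components reduces the disconnected case to this.

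For the reverse inclusion I would show that the family of quantum automorphism groups of connected rooted block graphs already contains $\Jor^+(\C)$. It contains $\C=\Qu(K_1,r)$. A key device: given $(G,r)$ with $\Qu(G,r)=\Gg$, attaching a pendant path at $r$ and re-rooting at its far end produces, via the degree-one reduction in \Cref{thm:qu_rooted_block_graphs}, a connected rooted block graph of arbitrarily large size with the same rooted quantum automorphism group $\Gg$; distinct path lengths yield pairwise non-isomorphic such realisations. Next, gluing finitely many $(G_i,r_i)$ at their roots to a single vertex $v$ and taking $v$ as root gives a connected rooted block graph $(H,v)$ in which $v$ is a cut vertex with $\Psi(H,v)=\bigoplus_i(G_i,r_i)$; as $\{v\}$ is complete, \Cref{thm:iso} gives $\Qu(H,v)=\Qu(\Psi(H,v))$, which by \Cref{thm:qu_sums_rooted_graphs} equals $\bigast_i\Qu(G_i,r_i)$ when the pieces are pairwise non-isomorphic and $\Gg\wr S_n^+$ when $n$ copies of one $(G,r)$ are used. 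Combined with the pendant-path device (to realise free products of equal factors), this shows the rooted family is closed under free products and under free wreath products with $S_n^+$, hence contains $\Jor^+(\C)$. Finally, to realise a given $\Gg=\Qu(G,r)$ as $\Qu(H)$ for an unrooted block graph, I would attach to $r$ a pendant path long enough, and of suitable parity, that the centre of $H$ is a single cut vertex $c$ strictly inside the path; then $\Psi(H,c)$ splits $H$ into a $G$-side reducing to $(G,r)$ and a pure path, so \Cref{prop:block_anchored}, \Cref{thm:iso} and \Cref{thm:qu_sums_rooted_graphs} give $\Qu(H)=\Gg\ast\C=\Gg$. The same constructions with the group-valued analogues handle $\Jor(1)$.

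The step I expect to be most delicate is the realisation (reverse) inclusion, and within it two points: realising free products of isomorphic factors, which forces the pendant-path device producing non-isomorphic rooted block graphs with a prescribed rooted quantum automorphism group; and the passage from rooted to unrooted realisations, where one must control the position of the centre so that the auxiliary path contributes only a trivial free factor $\C$. By contrast, the decomposition formulas and the forward inclusion into $\Jor^+(\C)$ are routine once \Cref{thm:decomposition_qu_group_anchored,thm:qu_rooted_block_graphs,thm:qu_sums_rooted_graphs} are in hand.
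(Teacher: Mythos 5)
Your derivation of the two formulas and of the forward inclusion $\Qu(G)\in\Jor^+(\C)$ matches the paper: both rest on \Cref{prop:block_anchored}, \Cref{thm:decomposition_qu_group_anchored} (with the hypothesis checked via the superrigidity of rooted block graphs, \Cref{lem:qi_superrigid_rooted}), and an induction on \Cref{thm:qu_rooted_block_graphs} together with \Cref{thm:qu_sums_rooted_graphs}. Where you genuinely diverge is the reverse inclusion $\Jor^+(\C)\subseteq\Ff$ (and $\Jor(1)\subseteq\mathcal E$): the paper does not construct anything, it simply observes that every tree is a block graph and invokes the known realisation theorems -- \cite[Theorem 6.30]{Meunier2023} for the quantum case and Jordan's classical result \cite{jordan1869assemblages} -- so the inclusion is a one-line citation. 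You instead rebuild the realisation from scratch inside the class of block graphs, via gluing rooted graphs at a common cut vertex (to realise free products and free wreath products with $S_n^+$ through \Cref{thm:iso} and \Cref{thm:qu_sums_rooted_graphs}), a pendant-path device to manufacture pairwise non-isomorphic realisations with a prescribed rooted quantum automorphism group, and a parity/length argument placing the centre at a cut vertex inside the auxiliary path to pass from rooted to unrooted realisations. This is correct in outline, and your identification of the delicate points (non-isomorphic equal factors for free products; control of the centre) is accurate; note also that your constructions, started from $K_1$, only ever produce trees, so you are in effect re-proving the cited tree theorem. The trade-off is clear: your route is self-contained but requires several extra verifications (that a path rooted at an endpoint has trivial quantum automorphism group, that the auxiliary path contributes a free factor $\C$, that the centre is the single intended vertex), whereas the paper's route is shorter but imports two external results. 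Neither is a gap; if you keep your construction, you should write out the centre-location computation and the triviality of rooted paths explicitly, since these are the only places where something could silently go wrong.
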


\begin{proof}
	
	The first formula follows from~\Cref{thm:decomposition_qu_group_anchored} and the second one is straightforward. 
	Let $\mathcal E$ be the family of automorphism groups of block graphs, and let $\Ff$ be the family of quantum automorphism groups of block graphs. 
	
	Let us show that $\Ff \subset \Jor^+(\C)$. 
	By~\cite[Theorem 5.3]{Meunier2023}, it is enough to show that $ \Qu(G) \in \Jor^+(\C) $ for every connected block graph $G$.
	First, we claim that $ \Qu(H,s) \in \Jor^+(\C) $ for every connected rooted block graph $(H,s)$. This follows from a straightforward induction, using \Cref{thm:qu_rooted_block_graphs} and \Cref{thm:qu_sums_rooted_graphs}. Now, let $G$ be a connected block graph. Writing $\Psi(G,Z(G))$ as above, we have that $\Fix_{\Qu(G_i)}(r_i) = \Qu(G_i,r_i)$. Therefore, by the formula we just proved, $\Qu(G) \in \Jor^+(\C) $, as desired. 
	
	Now, note that every tree is a block graph, and by~\cite[Theorem 6.30]{Meunier2023}, every compact quantum group in $\Jor^+(\C)$ is the quantum automorphism group of a tree. Thus, $\Jor^+(\C) \subseteq \Ff$. 
	
	For the classical automorphism group, a similar argument shows that $\mathcal E \subseteq \Jor(1)$. Moreover, from~\cite{jordan1869assemblages} (see the arguments on page 188, particularly Equation (4)), evert group in $\Jor(1)$ is the automorphism group of a tree, thus of a block graph, proving $\Jor(1) \subseteq \mathcal E $.
\end{proof}

\subsection{Block-cographs}

In this section, we extend our result to a larger class of graphs called block co-graphs. 

Given a family $\Ff$ of graphs, the family $\co(\Ff)$ is the smallest family that contains $\Ff$ and is stable by taking complement and disjoint union. Starting from $\Ff = \{K_1\}$, one finds back the definition of cographs, which is why the class $\co(\Ff)$ is called \emph{$\Ff$-cographs}.

We call \emph{block-cographs} the family $\Ff$-cographs where $\Ff$ is the family of block graphs. Block-cographs form a strict superclass of block graphs. For instance, a cycle of length 4 is a block-cograph but not a block graph. 

To prove our result, the idea is to use the following theorem which is a special case of Theorems 5.13, 5.16, 5.19, and 5.22 of~\cite{Meunier2023}.
\begin{theorem} \label{thm:cograph_closure}
	If $\Ff$ is a superrigid tractable family of graphs such that 
	\begin{itemize}
		\item $\Ff$ is stable by taking connected components; 
		\item $\Ff$ is stable by taking complement;
		\item $K_1 \in \Ff$.
	\end{itemize}
	Then, $\co(\Ff)$ is superrigid tractable. Moreover, $$\Qu(\co(\Ff)) = \Jor^+(\Qu(\Ff))  \text{ and } \Aut(\co(\Ff)) = \Jor(\Aut(\Ff)).$$
\end{theorem}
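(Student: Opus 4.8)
The theorem is stated as a special case of results in~\cite{Meunier2023}, and could simply be cited; here I sketch a self-contained argument. The plan is to induct on $\abs{V(G)}$, using the recursive structure of $\co(\Ff)$ together with invariance under complementation, with the base case supplied by $K_1\in \Ff$ and the assumption that $\Ff$ is superrigid tractable. The starting observation is complement invariance: for a magic unitary $U$ indexed by $V(H)\times V(G)$ one has $UJ = J = JU$ (all row and column sums of $U$ equal $1$) and $UI = U = IU$, so from $\Adj(\bar G) = J - I - \Adj(G)$ one checks directly that $U\Adj(G) = \Adj(H)U$ if and only if $U\Adj(\bar G) = \Adj(\bar H)U$. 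Hence $\Qu(G) = \Qu(\bar G)$, $\Aut(G) = \Aut(\bar G)$, and $G\qi H \iff \bar G\qi \bar H$. Since a join is the complement of a disjoint union of complements, this reduces every statement about joins to the corresponding statement about disjoint unions.

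Reading the closure from the top, a graph $G\in \co(\Ff)$ with $\abs{V(G)}\geq 2$ is disconnected, has disconnected complement, or is connected with connected complement; in the last case $G$ can be neither a nontrivial union nor a nontrivial join and must therefore lie in $\Ff$. Stability of $\Ff$ under connected components and complements guarantees that recursing into components, or into the complement, stays inside $\co(\Ff)$ on strictly fewer vertices. For a disjoint union $G = \bigoplus_i a_i.G_i$ with the $G_i$ connected and pairwise non-isomorphic, the free-wreath-product description of the quantum automorphism group of a disjoint union (\Cref{thm:qu_sums_rooted_graphs} and its unrooted analogue) gives $\Qu(G) = \bigast_i \Qu(G_i)\wr S_{a_i}^+$ and $\Aut(G) = \prod_i \Aut(G_i)\wr S_{a_i}$, while~\Cref{lem:mu_partition_connected_components} and~\Cref{thm:partitioned_graphs} ensure that any quantum isomorphism onto $G$ respects the component partition and restricts to quantum isomorphisms of the summands.

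With this in hand, (SR) would follow by induction: given $H\qi G$, the trichotomy type of $G$ is forced on $H$, since quantum isomorphism preserves connectedness of both a graph and its complement. If $G\in \Ff$, superrigidity of $\Ff$ gives $H\simeq G$; if $G$ is disconnected,~\Cref{thm:partitioned_graphs} matches the components of $H$ with those of $G$ and the induction hypothesis upgrades the component quantum isomorphisms to genuine isomorphisms, which reassemble to $G\simeq H$; the co-disconnected case follows by complementing and applying the disconnected argument to $\bar G$, whose components are strictly smaller. The proof of (SA) runs in parallel, using a lemma of the shape of~\Cref{lem:sum_sa} to transfer the presence and absence of Schmidt's criterion between a disjoint union and its connected summands. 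Finally, $\Qu(\co(\Ff)) = \Jor^+(\Qu(\Ff))$ and $\Aut(\co(\Ff)) = \Jor(\Aut(\Ff))$ are read off by tracking the free products and free wreath products (respectively products and wreath products) produced at each union and join node through the induction, which are exactly the operations defining $\Jor^+$ and $\Jor$.

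The step I expect to be the main obstacle is showing that the decomposition type and its pieces are genuine quantum-isomorphism invariants: that a quantum isomorphism onto a disconnected graph is block-diagonal for the component partition and induces quantum isomorphisms of the summands \emph{with the correct multiplicities}, so that the structure $\Qu(G_i)\wr S_{a_i}^+$ emerges rather than a bare free product. This is precisely where the partitioned-graph theory of~\Cref{thm:partitioned_graphs} and the connectedness-preservation of~\Cref{lem:mu_partition_connected_components} carry the load, and where one must take care in grouping repeated isomorphic summands correctly.
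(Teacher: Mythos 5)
The paper does not actually prove this statement: it is quoted as a special case of Theorems 5.13, 5.16, 5.19, and 5.22 of \cite{Meunier2023}, so your sketch is being compared against a citation rather than a written argument. That said, your reconstruction of the superrigid-tractable part follows the same strategy as the cited proofs: complement-invariance of magic unitary intertwiners (via $\Adj(\bar G)=J-I-\Adj(G)$) reduces joins to disjoint unions, the trichotomy (disconnected / co-disconnected / lying in $\Ff$) drives an induction on the number of vertices, \Cref{thm:partitioned_graphs} together with \Cref{lem:mu_partition_connected_components} matches components and restricts quantum isomorphisms to them, and an unrooted analogue of \Cref{lem:sum_sa} handles (SA) for disjoint unions. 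This part is essentially sound, granted two standard facts you gesture at but do not prove: that $\co(\Ff)$ inherits stability under connected components from $\Ff$, and that a connected, co-connected member of $\co(\Ff)$ must lie in $\Ff$ (both require a short induction on the construction tree and use the complement-stability of $\Ff$).

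The genuine gap is in your last step, the equalities $\Qu(\co(\Ff))=\Jor^{+}(\Qu(\Ff))$ and $\Aut(\co(\Ff))=\Jor(\Aut(\Ff))$. Tracking the free products and free wreath products produced at each union and join node only yields the inclusions $\Qu(\co(\Ff))\subseteq\Jor^{+}(\Qu(\Ff))$ and $\Aut(\co(\Ff))\subseteq\Jor(\Aut(\Ff))$. The reverse inclusions are realization statements: given $G,H\in\co(\Ff)$, one must exhibit a graph in $\co(\Ff)$ whose quantum automorphism group is $\Qu(G)\ast\Qu(H)$, and the obvious candidate $G\oplus H$ fails exactly when some component of $G$ is quantum isomorphic to a component of $H$, since the decomposition then produces a free wreath product $\Qu(G_i)\wr_{\ast}S_{2}^{+}$ where a free factor $\Qu(G_i)\ast\Qu(G_i)$ is wanted. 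One has to replace $H$ by a non-quantum-isomorphic graph in $\co(\Ff)$ with the same quantum automorphism group (for instance by padding with isolated or dominating vertices, which is where the hypotheses $K_1\in\Ff$ and complement-stability earn their keep) and check that the modification changes neither $\Qu$ nor $\Aut$. This realization step is the delicate point of the corresponding theorems in \cite{Meunier2023} (and of the tree case in \cite{vanDobbenetal2023}), and your sketch does not address it.
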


Using this theorem, we generalise our former result to block-cographs. 
\begin{theorem} \label{thm:blockcographs}
	The family of block-cographs is superrigid tractable. Moreover, the family of their automorphism groups and their quantum automorphism groups are respectively $\Jor(1)$ and $\Jor^+(\C)$. 
\end{theorem}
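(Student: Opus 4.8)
The plan is to apply \Cref{thm:cograph_closure} directly to the family $\Ff$ of block graphs. The theorem has a clean hypothesis-conclusion structure: if $\Ff$ is superrigid tractable, stable under taking connected components, stable under taking complement, and contains $K_1$, then $\co(\Ff)$ is superrigid tractable with the prescribed (quantum) automorphism groups. Since block-cographs are by definition $\co(\Ff)$ for $\Ff$ the class of block graphs, the entire proof reduces to verifying the four hypotheses for block graphs and then reading off the conclusion.

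First I would establish superrigid tractability of block graphs, but this is already done: it is precisely \Cref{thm:block_graphs1}. Next, $K_1\in \Ff$ is immediate, since a single vertex is trivially a block graph (its unique block being the complete graph $K_1$). Stability under connected components is also essentially immediate: a connected component of a block graph is itself a graph all of whose blocks are complete, since the blocks of a component are exactly those blocks of the original graph contained in it. The one hypothesis that requires genuine thought is stability under taking complement, and I expect this to be the main obstacle.

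The subtlety with complementation is that the class of block graphs is \emph{not} closed under complement in general — for instance, the complement of a long path is not a block graph — so I would expect the intended reading to be that one only needs closure under complement within the relevant combinatorial scope, or that the theorem's hypothesis is interpreted appropriately (as in~\cite{Meunier2023}, where the corresponding verification for tree-cographs handles this via the fact that disconnected graphs and their complements interact in a controlled way under the $\co$ operation). Concretely, the reduction in \Cref{thm:cograph_closure} only ever takes complements of \emph{disconnected} members of $\co(\Ff)$, and by the standard cograph dichotomy a graph or its complement is disconnected; the base members $\Ff$ themselves are fed in as connected pieces. So the complement hypothesis should be checked in the form actually used: I would verify that the construction of $\co(\Ff)$ never forces the complement of a connected block graph to lie in $\Ff$, and confirm that the hypotheses of \Cref{thm:cograph_closure} are met as stated in~\cite{Meunier2023}, where the same hypotheses were verified for tree-cographs.

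Once the four hypotheses are confirmed, the conclusion follows mechanically: \Cref{thm:cograph_closure} yields that $\co(\Ff)$ is superrigid tractable, with $\Aut(\co(\Ff)) = \Jor(\Aut(\Ff))$ and $\Qu(\co(\Ff)) = \Jor^+(\Qu(\Ff))$. By \Cref{thm:block_graphs2} we already know $\Aut(\Ff) = \Jor(1)$ and $\Qu(\Ff) = \Jor^+(\C)$. It then remains only to observe that $\Jor(\Jor(1)) = \Jor(1)$ and $\Jor^+(\Jor^+(\C)) = \Jor^+(\C)$, i.e.\ that the Jordan-closure operations are idempotent, which is immediate from their definition as closures under the relevant product operations. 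This collapses the iterated closures and gives exactly $\Jor(1)$ and $\Jor^+(\C)$, completing the proof.
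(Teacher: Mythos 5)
Your overall strategy is right, and you correctly identify the one hypothesis of \Cref{thm:cograph_closure} that does not hold for the family of block graphs: closure under complement. But your proposed way out --- reinterpreting the hypothesis, or arguing that the construction of $\co(\Ff)$ ``never forces the complement of a connected block graph to lie in $\Ff$'' --- is not a proof. \Cref{thm:cograph_closure} is quoted with complement-closure as a stated hypothesis, and you cannot apply it to the class of block graphs as is; nor can you, in a blind argument, silently re-prove a weakened version of a theorem imported from~\cite{Meunier2023}. This is a genuine gap, and it is exactly where the paper does something you do not.

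The paper's fix is simple: it applies \Cref{thm:cograph_closure} not to block graphs but to the enlarged family $\Ff$ consisting of block graphs \emph{and their complements}, which is complement-closed by construction and satisfies $\co(\Ff) = \co(\text{block graphs})$, since the $\co$ operation is itself closed under complementation. This enlargement creates two new obligations that your proposal never discharges. First, one must check that the enlarged $\Ff$ is still superrigid tractable; this follows because two graphs are quantum isomorphic if and only if their complements are, and a graph and its complement have the same (quantum) automorphism group, so (SR) and (SA) transfer from block graphs to their complements. Second, one must check that $\Ff$ is still stable under taking connected components: a connected component of the complement of a block graph $G$ is an induced subgraph of $G^{c}$, so its complement is an induced subgraph of $G$ and hence a block graph, placing the component back in $\Ff$. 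With these two verifications the hypotheses of \Cref{thm:cograph_closure} hold, and the conclusion reads off as you describe (your closing observation that $\Jor(\Jor(1)) = \Jor(1)$ and $\Jor^{+}(\Jor^{+}(\C)) = \Jor^{+}(\C)$ is correct and is indeed needed, if only implicitly, in the paper's version too). I recommend you restructure the proof around the enlarged family; the rest of your argument then goes through.
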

\begin{proof}
	Let $\Ff$ be the class of block graphs and their complements. We prove that $\Ff$ satisfies the conditions of \Cref{thm:cograph_closure}. The superrigidity follows from the fact that two graphs are quantum isomorphic if and only if their complements are quantum isomorphic. The (SA) axiom follows from the fact that a graph and its complement have the same classical and quantum automorphism group. Thus, $ \Ff $ is superrigid tractable. Moreover, it clearly is stable by taking competent and it contains $K_1$. Finally, note that for any two graphs $H$ and $G$, if $H$ is an induced subgraph of $G^c$, then $H^c$ is an induced subgraph of $G$. Now, since block graphs are stable under taking connected components, so is $\Ff$.
	
	Hence, the result follows from \Cref{thm:cograph_closure}, and the fact that $\co(\Ff)$ is the same family as block-cographs. 
\end{proof}

It is worth noting that the family of block-cographs is strictly bigger than the family of tree-cographs, for instance the bull graph (see \Cref{fig:bull}) is a block-cograph (and even a block graph) but not a tree-cograph (since it is self-complement).
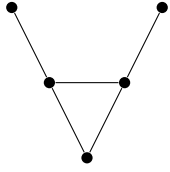
\begin{figure}
	\begin{tikzpicture}
		\tikzset{bullet/.style={circle, fill=black, inner sep=1.5pt}}
		\node[bullet] (1) at (0,0) {};
		\node[bullet] (2) at (-1/2,1) {};
		\node[bullet] (3) at (1/2,1) {};
		\node[bullet] (4) at (-1,2) {};
		\node[bullet] (5) at (1,2) {};
		\draw (4) -- (2) -- (3) -- (1) -- (2);
		\draw (5) -- (3) ;
	\end{tikzpicture}
	\caption{The bull graph} \label{fig:bull}
\end{figure}


\begin{thebibliography}{10}
	\bibitem{Mancinska2019QiNotI}
	Albert Atserias, Laura Man{\v{c}}inska, David Roberson, Robert {\v{S}}{\'a}mal,
	Simone Severini, and Antonios Varvitsiotis.
	\newblock Quantum and non-signalling graph isomorphisms.
	\newblock {\em Journal of Combinatorial Theory, Series B}, 136:289--328, 2019.
	
	\bibitem{Banica2004qaut}
	Teodor Banica.
	\newblock Quantum automorphism groups of small metric spaces.
	\newblock {\em Pacific journal of mathematics}, 219(1):27--51, 2005.
	
	\bibitem{banica2007free}
	Teodor Banica and Julien Bichon.
	\newblock Free product formulae for quantum permutation groups.
	\newblock {\em Journal of the Institute of Mathematics of Jussieu},
	6(3):381--414, 2007.
	
	\bibitem{Banica2009fusion}
	Teodor Banica and Roland Vergnioux.
	\newblock Fusion rules for quantum reflection groups.
	\newblock {\em Journal of noncommutative geometry}, 3(3):327--359, 2009.
	
	\bibitem{bichon2004wreath}
	Julien Bichon.
	\newblock Free wreath product by the quantum permutation group.
	\newblock {\em Algebras and representation theory}, 7:343--362, 2004.
	
	\bibitem{BondyMurty}
	John~Adrian Bondy and Uppaluri Siva~Ramachandra Murty.
	\newblock {\em Graph Theory}.
	\newblock Springer Publishing Company, Incorporated, 1st edition, 2008.
	
	\bibitem{BuckleyHarary1990Distancesingraphs}
	Fred Buckley and Frank Harary.
	\newblock {\em Distance in graphs}.
	\newblock Addison-Wesley Publishing Company, Advanced Book Program, Redwood
	City, CA, 1990.
	
	\bibitem{vanDobbenetal2023}
	Josse Van~Dobben De~Bruyn, Prem~Nigam Kar, David Roberson, Simon Schmidt, and
	Peter Zeman.
	\newblock Quantum automorphism groups of trees.
	\newblock {\em arXiv preprint arXiv:2311.04891}, 2023.
	
	\bibitem{Freslon2023Book}
	Amaury Freslon.
	\newblock {\em Compact Matrix Quantum Groups and Their Combinatorics}.
	\newblock London Mathematical Society Student Texts. Cambridge University
	Press, 2023.
	
	\bibitem{freslon2025blockstructures}
	Amaury Freslon, Paul Meunier, and Pegah Pournajafi.
	\newblock Block structures of graphs and quantum isomorphism.
	\newblock {\em arXiv:2502.19343}, 2025.
	
	\bibitem{FultonThesis}
	Melanie Fulton.
	\newblock {\em The Quantum Automorphism Group and Undirected Trees}.
	\newblock PhD thesis, Virginia Polytechnic Institute and State University,
	2006.
	
	\bibitem{Gromov1987hyperbolicgroups}
	M.~Gromov.
	\newblock {\em Hyperbolic Groups}, pages 75--263.
	\newblock Springer New York, New York, NY, 1987.
	
	\bibitem{Harary1963blockgraphs}
	Frank Harary.
	\newblock A characterization of block-graphs.
	\newblock {\em Canadian Mathematical Bulletin}, 6(1):1--6, 1963.
	
	\bibitem{jordan1869assemblages}
	Camille Jordan.
	\newblock Sur les assemblages de lignes.
	\newblock {\em Journal f\"ur Mathematik}, 70(2):185--190, 1869.
	
	\bibitem{Meunier2023}
	Paul Meunier.
	\newblock Quantum properties of {$\mathcal F$}-cographs.
	\newblock {\em arXiv preprint arXiv:2312.01516}, 2023.
	
	\bibitem{schmidt2020criterion}
	Simon Schmidt.
	\newblock Quantum automorphisms of folded cube graphs.
	\newblock {\em Annales de l'Institut Fourier}, 70:949--970, 2020.
	
	\bibitem{Slofstra2020}
	William Slofstra.
	\newblock Tsirelson’s problem and an embedding theorem for groups arising
	from non-local games.
	\newblock {\em Journal of the American Mathematical Society}, 33(1):1--56,
	2020.
	
	\bibitem{Wang1998}
	Shuzhou Wang.
	\newblock Quantum symmetry groups of finite spaces.
	\newblock {\em Communications in Mathematical Physics}, 195(1):195–211, July
	1998.
	
	\bibitem{woronowicz1987compact}
	Stanis{\l}aw~L Woronowicz.
	\newblock Compact matrix pseudogroups.
	\newblock {\em Communications in Mathematical Physics}, 111:613--665, 1987.
	
	\bibitem{woronowicz1991remark}
	Stanis{\l}aw~L Woronowicz.
	\newblock A remark on compact matrix quantum groups.
	\newblock {\em Letters in mathematical physics}, 21(1):35--39, 1991.
\end{thebibliography}
\end{document}